\newtheorem{Theorem}{Theorem}[section]
\newtheorem{Lemma}[Theorem]{Lemma}
\newtheorem{Proposition}[Theorem]{Proposition}
\newtheorem{Corollary}[Theorem]{Corollary}
\theoremstyle{definition}
\theoremstyle{remark}
\newtheorem{Remark}[Theorem]{Remark} 
\numberwithin{equation}{section}
\newcommand{\R}{\mathbb R}
\newcommand{\C}{\mathbb C}
\newcommand{\D}{\mathbb D}
\newcommand{\cg}{\mathcal{G}}
\newcommand{\fg}{\mathfrak g}
\newcommand{\sli}{\mathfrak{sl}}
\newcommand{\U}{{\rm U}_{3}}
\newcommand{\SU}{{\rm SU}_{3}}
\newcommand{\UI}{{\rm U}_{2, 1}}
\newcommand{\SUI}{{\rm SU}_{2, 1}}
\newcommand{\UIT}{\widetilde{ {\rm U}_{2, 1}}}
\newcommand{\SUIT}{\widetilde{ {\rm SU}_{2, 1}}}
\newcommand{\Utwo}{{\rm U}_2}
\newcommand{\UtwoT}{\widetilde {{\rm U}_{1, 1}}}
\newcommand{\id}{I}
\newcommand{\SL}{{\rm SL}_3 \mathbb C}
\newcommand{\SLR}{{\rm SL}_3 \mathbb R}
\renewcommand{\sl}{\mathfrak{sl}_3 \mathbb C}
\newcommand{\lsl}{\Lambda \mathfrak{sl}_3 \mathbb C_{\sigma}}
\newcommand{\LSL}{\Lambda {\rm SL}_3 \mathbb C_{\sigma}}
\newcommand{\ad}{\operatorname{Ad}}
\newcommand{\di}{\operatorname{diag}}
\newcommand{\tr}{\operatorname{Tr}}
\renewcommand{\Re}{\operatorname {Re}}
\renewcommand{\Im}{\operatorname {Im}}
\newcommand{\bpm}{\begin{pmatrix}}
\newcommand{\epm}{\end{pmatrix}}
\newcommand{\CH}{\mathbb {CH}^{2}}
\newcommand{\CHI}{\mathbb {CH}^{2}_1}
\newcommand{\CP}{\mathbb {CP}^{2}}
\newcommand{\f}{\mathfrak f}
\renewcommand{\d}{\mathrm d}
\newcommand{\SCP}{\scaleto{\mathbb {CP}^2}{4pt}}
\newcommand{\SCPt}{\scaleto{\mathbb {CP}^2}{3pt}}
\newcommand{\SCH}{\scaleto{\mathbb {CH}^2}{4pt}}
\newcommand{\SCHt}{\scaleto{\mathbb {CH}^2}{3pt}}
\newcommand{\ST}{\scaleto{\mathbb {CH}^2_1}{4pt}}
\newcommand{\STt}{\scaleto{\mathbb {CH}^2_1}{3pt}}
\newcommand{\SEA}{\scaleto{\mathbb {A}^3}{4pt}}
\newcommand{\SEAt}{\scaleto{\mathbb {A}^3}{3pt}}
\newcommand{\SIA}{\scaleto{i \mathbb {A}^3}{4pt}}
\newcommand{\SIAt}{\scaleto{i \mathbb {A}^3}{3pt}}
\renewcommand{\r}[1]{ #1_0}
\renewcommand{\l}{\lambda}
\begin{document}
\title{Survey on real forms of the complex 
 $A_2^{(2)}$-Toda equation and surface theory}
 
 \author[J. F.~Dorfmeister]{Josef F. Dorfmeister}
 \address{Fakult\"at f\"ur Mathematik, 
 TU-M\"unchen, 
 Boltzmannstr.3,
 D-85747, 
 Garching, 
 Germany}
 \email{dorfm@ma.tum.de}
 \author[W.~Freyn]{Walter Freyn}
 \address{Mathematical Institute,
University of Oxford, Andrew Wiles Building
 Radcliffe Observatory Quarter (550)
Woodstock Road
Oxford
OX2 6GG
}
 \email{walter.freyn@exeter.ox.ac.uk}

 \author[S.-P.~Kobayashi]{Shimpei Kobayashi}
 \address{Department of Mathematics, Hokkaido University, 
 Sapporo, 060-0810, Japan}
 \email{shimpei@math.sci.hokudai.ac.jp}
 \thanks{The third named author is partially supported by JSPS 
 KAKENHI Grant Number JP18K03265.}

\author{Erxiao Wang}
\address{Department of Mathematics, Hong Kong University of Science \& Technology, Clear Water Bay, Kowloon, Hong Kong} 
\email{maexwang@ust.hk}

 \subjclass[2010]{Primary~53A10, 53B30, 58D10, Secondary~53C42}
 \keywords{Minimal Lagrangian surfaces; Affine spheres; Loop groups; Real forms; 
 Tzitz\'eica equations}
 \date{\today}
\pagestyle{plain}
\begin{abstract}
 The classical result of describing harmonic maps from surfaces 
 into symmetric spaces of reductive Lie groups \cite{BuRa} 
 states that the Maurer-Cartan form with an additional parameter, 
 the so-called \textit{loop parameter},
 is integrable for all values of the loop parameter.
 As a matter of fact, the same result holds for $k$-symmetric 
 spaces over reductive Lie groups, \cite{Bu}.

 In this survey we will show that to each of the five 
 different types of \textit{real forms} for a loop group of $A_2^{(2)}$ there exists a surface class, for which some frame 
 is integrable for all values of the loop parameter if and only if it belongs 
 to one of the surface classes, that is, 
 minimal Lagrangian surfaces in $\CP$, minimal Lagrangian surfaces in 
 $\CH$, timelike minimal Lagrangian surfaces in $\CHI$, proper definite  
 affine spheres in $\R^3$ and proper indefinite  affine spheres in $\R^3$, respectively.
\end{abstract}

\maketitle
\section*{Introduction }
Following the important work of Zakharov-Shabat \cite{ZS} and
Ablowitz-Kaup-Newell-Segur \cite{AKNS} in the 1970s, 
systematic constructions of hierarchies of integrable differential
equations were developed. They were associated to a complex simple 
Lie algebra with various
reality conditions given by finite order automorphisms. Mikhailov \cite{Mi} first studied their reductions with various reality conditions given by finite order automorphisms. 
Drinfeld-Sokolov \cite{DS} constructed generalized 
KdV and mKdV hierarchies for any affine Kac-Moody Lie algebra using
this ZS-AKNS scheme. In particular, the sine-Gordon equation and
the sinh-Gordon equation are two real forms of the $-1$-flow or
Toda-type equation in the mKdV-hierarchy for the simplest affine
algebra $A_1^{(1)}$, which is a $2$-dimensional extension of the loop 
algebra \footnote{As in \cite[Chapter 7 or 8]{Kac} one obtains 
 a Kac-Moody algebra by adding $2$ dimensions $\C d \oplus \C c,$ 
 where for loop elements the Lie algebra element $d$ tells the 
 degree of lambda  and $c$ is central.} of $\sli_2 \C$. 

 It is amazing that these two equations have already appeared in classical
differential geometry for constant negative Gauss curvature surfaces 
(or pseudo-spherical surfaces) and constant mean curvature surfaces.
For example, B\"acklund \cite{Ba} constructed his
famous transformation for pseudo-spheres around $1883$, which
produced many explicit solutions of the sine-Gordon equation
$\omega_{xy}=\sin \omega$. This transformation and the higher flows
in the hierarchy can be regarded as hidden symmetries of such
submanifolds or differential equations. It has ever since become a central problem in geometry how to
find special submanifolds in higher dimension and/or codimension which admit similar
geometric transformations and have a lot of hidden symmetries, \cite{Te2}. It is
now natural to expect the answer to lie in integrable systems, as we
will illustrate it further using next the rank $2$ affine algebra 
$A_2^{(2)}$,  which is a $2$-dimensional extension
 of a loop subalgebra  of $\sli_3 \C$,  twisted by an outer automorphism $\sigma$, that is $\lsl$. 
 Here the outer automorphism $\sigma$ has order $6$ and it is defined by 
\[
 \sigma (g) (\l) = \hat \sigma (g(\epsilon^{-1}\l)), \quad  \mbox{for
 $g (\l) \in \Lambda \mathfrak {sl}_3 \C$},
\] with $\epsilon = e^{\pi i/3}$ (the natural primitive sixth root of unity) 
 and $\hat \sigma$ is the automorphism of $\sl$ given by
 \[
\hat \sigma (X) = - \ad (\di(\epsilon^2, \epsilon^4, -1)  \r{P} )\> X^T
\quad\mbox{with} \quad 
  \r{P} = \begin{pmatrix}
	   0 &1 &0  \\
	   1 &0 &0 \\
	   0 & 0 & -1
	  \end{pmatrix}.
\]
 Then a fundamental question for the affine algebra $A_2^{(2)}$ is,
 how many different real forms it has. In our case this means how many 
 different real forms of $\lsl$ there exist.
 The answer was given by \cite{B3R, BMR, HG}: 
 there are $5$ different real form involutions;
\begin{align*}
 (\bullet_{\SCP}) \quad &\tau(g)(\lambda) = - \overline{g(1/\bar \lambda)}^T,    \\
 (\bullet_{\SCH})\quad & \tau (g)(\lambda) =  - \ad (I_{2,1})  \overline
 {g(1/\bar \lambda)}^T,  \\
 (\maltese_{\ST}) \quad &
 \tau (g)(\lambda) = -\ad (\r{P})  \overline {g(\bar \lambda)}^T,  \\
 (\bullet_{\SEA})\quad& \tau(g)(\lambda) =\ad (I_{*}\r{P})  \> \overline
 {g(1/\bar \lambda)},  \\
 (\maltese_{\SIA}) \quad & \tau (g)(\lambda) = \overline
 {g(\bar \lambda)},
\end{align*}
 where $I_{2, 1} = \di(1, 1, -1)$ and $P_0$ is as just above.
 Moreover, $I_{*}$ denotes $\id$ or $I_{2, 1}$. 
 
 It was Tzitz\'eica \cite{Tz} who found a special class 
 of surfaces in Euclidean geometry, 
 which turns out to be equivalent to indefinite 
 \textit{affine spheres} in equi-affine geometry. They are related to 
 the real form involution
 $(\maltese_{\SIA})$ given by $\tau (g)(\lambda) = \overline
 {g(\bar \lambda)}$ above. More precisely, the coordinate frame 
 of an affine sphere
 with the  additional loop parameter
 is fixed  by the above real form involution. 
 More recently, minimal Lagrangian surfaces in $\CP$ or special Lagrangian cone in $\C^3$ have been 
 related to the involution $(\bullet_{\SCP})$ given by 
 $\tau(g)(\lambda) = - \overline{g(1/\bar \lambda)}^T$, see \cite{McIn} or 
 \cite{DM}. 

 In this survey, we relate all real forms of the affine algebra $A_2^{(2)}$ to classes of surfaces:

\begin{tabular}{lcl}
 $(\bullet_{\SCP})$& $- \overline{g(1/\bar \lambda)}^T, $&  
  Minimal Lagrangian surfaces in $\CP,$ \cite{MM}$,$ \\
 $(\bullet_{\SCH})$ & $ - \ad (I_{2,1})  \overline
 {g(1/\bar \lambda)}^T,$ &
 Minimal Lagrangian surfaces in $\CH,$  \cite{ML}$,$  \\
$(\maltese_{\ST})$ & $ -\ad (\r{P})  \overline
 {g(\bar \lambda)}^T,$ & 
 Timelike minimal Lagrangian surfaces in $\CHI,$ \cite{DK:timelike}$,$ \\
$(\bullet_{\SEA})$ & $\ad (I_*\r{P})  \> \overline
 {g(1/\bar \lambda)}, $&
 Elliptic or hyperbolic affine spheres in $\mathbb {R}^3,$ \cite{DW1}$,$ \\
 $(\maltese_{\SIA})$ & $\overline
 {g(\bar \lambda)},$&
 Indefinite affine spheres in $\R^3,$ \cite{DE}$,$
\end{tabular}

 where $I_{*}$ denotes $\id$ for the elliptic case $I_{2, 1}$ 
 for the hyperbolic case. 
 Then each of the classes of surfaces can be characterized 
 by some \textit{Tzitz\'eica equation}\footnote{The classical 
Tzitz\'eica equation is the one for the indefinite affine spheres.
 But also equations differing from the classical one by signs, like the equation above, are frequently called {\it Tzitz\'eica equation}.} :
\begin{align*}
(\bullet_{\SCP})\quad&  \omega^{\SCP}_{z \bar{z}} +  e^{\omega^{\SCPt}} -
 |Q^{\SCP}|^2 e^{-2 \omega^{\SCPt}} =0, \quad 
 Q^{\SCP}_{\bar{z}} = 0,   \\
(\bullet_{\SCH}) \quad& \omega^{\SCH}_{z \bar{z}} 
 -  e^{\omega^{\SCHt}} + |Q^{\SCH}|^2 e^{-2 \omega^{\SCHt}} =0, \quad 
 Q^{\SCH}_{\bar{z}} = 0, \\
 (\maltese_{\ST}) \quad& \omega^{\ST}_{uv} - e^{\omega^{\STt}} +e^{- 2 \omega^{\STt}} Q^{\ST}
 R^{\ST} =0,\quad  Q^{\ST}_{v} = R^{\ST}_u =0, \\
(\bullet_{\SEA}) \quad& \omega^{\SEA}_{z\bar{z}} + H e^{\omega^{\SEAt} } + |Q^{\SEA}|^2  e^{-2\omega^{\SEAt} } = 0, \;\;(H = \pm 1), \quad Q^{\SEA}_{\bar{z}}  = 0,
  \\
 (\maltese_{\SIA}) \quad& \omega^{\SIA}_{uv} - e^{\omega^{\SIAt}} +e^{- 2 \omega^{\SIAt}} 
 Q^{\SIA} R^{\SIA} =0,\quad \quad  Q^{\SIA}_{v} = R^{\SIA}_u =0.
\end{align*}
 Note that $Q^{\ST}, R^{\ST}$ take values in $i \R$ and 
 $Q^{\SIA}, R^{\SIA}$ take values in $\R$, respectively.

It is known that the above equations are different real forms of the $-1$-flow in the corresponding $A_2^{(2)}$-mKdV hierarchy, or the complex $A_2^{(2)}$-Toda field equation; and the real groups are exactly the automorphism groups of the corresponding geometries. 

 The fifth equation $(\maltese_{\SIA})$ has been studied in the context of 
 gas dynamics \cite{Ga} and pseudo-hyper-complex structures on 
 $\R^2 \times \R P^2$ \cite{Du}, and it is also related to harmonic 
 maps from $\R^{1,1}$ to the symmetric space ${\mathrm{SL}_3 \R / \mathrm{SO}_{2,1} \R}$. The fourth equation $(\bullet_{\SEA})$ above 
 can help construct semi-flat Calabi-Yau metrics and examples 
 for the SYZ Mirror Symmetry Conjecture, see \cite{LYZ, DP}. Specially the local radially symmetric solutions turn out to be Painlev\'e III transcendents. It is a striking universal feature of integrable systems that the same equation often arises from many unrelated sources. To further convince the reader of the great varieties here, we mention that minimal surfaces and Hamiltonian stationary Lagrangian surfaces
in $\CP$ and $\CH$ \cite{HR} also correspond to solutions of integrable systems
associated to $\sli_3 \C$, but with different automorphisms (of order $3$ and order $4$ respectively). 

One should also observe that in \cite{Kob} already all real forms of the affine algebra 
$A_1^{(1)}$ have been related to constant mean curvature/constant Gaussian curvature 
surfaces in the Euclidean $3$-space, the Minkowski $3$-space
 or the hyperbolic $3$-space.

The systematic construction from Lie theory above is just the starting point. It naturally gives rise to loop group factorizations, which in turn provide a method for constructing explicit solutions and symmetries of the equations. For example the classical B\"acklund and Darboux transformations have been generalized to dressing actions via loop group factorizations, see for examples Terng-Uhlenbeck \cite{TU} or Zakharov-Shabat \cite{ZS}. The classical Weierstrass representation of minimal surfaces has also been generalized by Dorfmeister-Pedit-Wu, \cite{DPW}, using Iwasawa type loop group factorizations. Many interesting questions naturally arise by translating between holomorphic/meromorphic data and properties of special geometric objects or special solutions of integrable PDEs. 
Although the original DPW method only considered  surfaces of conformal type (that is, associated with elliptic PDEs), it has also been generalized to surfaces of asymptotic line type (that is, associated with hyperbolic PDEs), such as 
 constant negative Gaussian curvature surfaces given by sine-Gordon equation, \cite{To}.
 Another way to get a very special class of solutions, called the finite type or finite gap solutions,  has beautiful and deep links to geometries of algebraic curves or Riemann surfaces and stable bundles over them, the so-called \textit{Hitchin systems}.

The paper is organized as follows: After discussing in the following sections one geometry for each real form of $A_2^{(2)}$ 
 we will compare their similarities and differences in Section \ref{sc:dpw} by the loop group method. To be self-contained and also to put this survey into a larger 
 context, we discuss the classification of our real forms in the last Section \ref{sc:real} from a geometric point of view.
 
\section{Minimal Lagrangian surfaces in $\CP$}\label{sc:mLi}
 In this section, we discuss a loop group formulation of 
 minimal Lagrangian surfaces in the complex projective plane $\CP$.
 The detailed discussion can be found in \cite{MM} or \cite{M}.
 In the following, the subscripts $z$ and $\bar z$ denote the derivatives with respect to 
 $z= x+ i y$ and $\bar z = x- i y$, respectively, that is,
\[
 f_z = \partial_z f := \frac{1}{2}\left( \frac{\partial f}{\partial x} - 
 i \frac{\partial f}{\partial y}\right), \quad
 f_{\bar z} = \partial_{\bar z} f := \frac{1}{2}\left( \frac{\partial f}{\partial x} +
 i \frac{\partial f}{\partial y}\right).
\]
\subsection{Basic definitions}
 We first consider the five-dimensional unit hypersphere $S^5$ 
 as a quadric in $\C^3$;
 \[
  S^5 = \{ v \in \C^3 \; | \; \langle v, v\rangle=1 \},
 \]
 where $\langle \;, \;\rangle$ is the standard Hermitian inner product in $\C^3$ 
 which is complex anti-linear in the second variable. 
 Then let $\CP$ be the two-dimensional complex projective plane and 
 consider the Hopf fibration $\pi : S^5 \to \CP$, given by $v \mapsto \C^{\times} v$. 
 We point out that the tangent space at 
 $u \in S^5$ is
\begin{equation*}
 T_u S^5  = \{ v \in \C^3 \;|\;\Re \langle v,u \rangle =0\}.
\end{equation*}
 Moreover, the space 
 $\mathcal{H}_u = \{ v \in T_u S^5\;|\;  \langle v,u \rangle =0\}$
 is a natural horizontal subspace.
The form $\langle \> ,\> \rangle$ is a positive definite Hermitian inner product on  
$\mathcal{H}_u$  with real and imaginary components
\[
 \langle \>,\> \rangle = g(\>,\>) + i \Omega(\>,\>).
\]
 Hence $g$ is positive definite and $\Omega$ is a symplectic form. 
 Put 
\[
 \U = \{ A : \C^3 \to \C^3 \;|\; \mbox{$\C$-linear satisfying 
 $\langle A u, A v\rangle =\langle u, v\rangle $}\},
\]
 and $\SU = \{ A \in \U \;| \; \det A =1\}$. We note $\U = S^1 \cdot 
 \SU$ and that these are connected real reductive Lie groups with their 
 centers consisting of multiples of the identity transformation.
 Then the groups $\U$ and $\SU$ act naturally on $S^5$ and $\CP$.
 The group $\U$ acts transitively on both spaces. Moreover, this action is 
 equivariant relative to $\pi$ and holomorphic on $\CP$. 
 Using the base point $e_3 = (0,0,1)^T$ it is easy to verify
 \[
  S^5 = \U/\Utwo \times \{1\}, \quad 
  \CP = \U/\Utwo \times S^1.
 \]
 
 
 \subsection{Horizontal lift and fundamental theorem}
 We now consider a Lagrangian immersion $f^{\SCP}$ 
 from a Riemann surface $M$ into $\CP$. 
 Then it is known that on an open and contractible subset $\D$
 of $M$, there exists a special lift into $S^5$, that is, 
 $\f^{\SCP} : \D \to S^5, \pi \circ \f^{\SCP} = f^{\SCP}|_{\D}$, 
 and 
\begin{equation}\label{eq:horizontal}
 \langle \d\f^{\SCP}, \f^{\SCP}\rangle=0
\end{equation}
 holds. The lift $\f^{\SCP}$ will be called 
 a \textit{horizontal lift} of $f^{\SCP}$.
 The induced metric of $f^{\SCP}$ is represented, 
 by using the horizontal lift $\f^{\SCP}$ as
 \[
  \d s^2 =  \Re \langle \d \f^{\SCP}, \d \f^{\SCP} \rangle.
 \]
 Since the induced metric is Riemannian, 
 we can assume that $f^{\SCP}$ is a conformal immersion 
 from $M$ to $\CP$.
 We take $z = x + i y$ to be its complex coordinates on 
 $\D \subset M$.
 Then the horizontality condition \eqref{eq:horizontal} implies 
 $\langle \f^{\SCP}_z, \f^{\SCP}\rangle
 = \langle \f^{\SCP}_{\bar z}, \f^{\SCP}\rangle=0$, 
 and taking the derivative with respect to 
 $\bar z$ of the first term and $z$ of the second term, respectively,
 we infer:
 \begin{equation}\label{eq:nullf}
 \langle \f^{\SCP}_{z}, \f^{\SCP}_{z}\rangle  = 
 \langle \f^{\SCP}_{\bar z}, \f^{\SCP}_{\bar z}\rangle>0.
\end{equation}
 Moreover, since $f^{\SCP}$ is conformal, we have
 \begin{equation}\label{eq:null}
 {\langle \f^{\SCP}_z, \f^{\SCP}_{\bar z}\rangle}  =  0.
\end{equation} 
 Therefore there exists a real function $\omega^{\SCP}: 
 \D \to \R$ such that 
\begin{equation}\label{eq:metric}
 \langle \f^{\SCP}_z, \f^{\SCP}_{z}\rangle  
 =\langle \f^{\SCP}_{\bar z}, \f^{\SCP}_{\bar z}\rangle  
 = e^{\omega^{\SCP}}, \quad \mbox{and}\quad 
 \d s^2 = 2 e^{\omega^{\SCP}} \d z  \d \bar z.
\end{equation}
 It is also easy to see from 
 $\langle \f^{\SCP}_x, \f^{\SCP}\rangle = \langle \f^{\SCP}_y, \f^{\SCP}\rangle = 0$, 
 and the derivative with respect to 
 $y$ of the first term and $x$ of the second term, respectively, that 
 \[
  \Omega (\f^{\SCP}_x, \f^{\SCP}_y)=0,
 \]
 that is,  $\f^{\SCP}$ is a Legendre immersion.
 We now consider the \textit{coordinate frame}
 \begin{equation}\label{eq:coordinateframeCP}
 \mathcal F_{\SCP} = ( e^{-\frac{1}{2}\omega^{\SCPt}}\f^{\SCP}_z, 
 e^{-\frac{1}{2}\omega^{\SCPt}} \f^{\SCP}_{\bar z}, \f^{\SCP}).
\end{equation}
 It is straightforward to see that $\mathcal F_{\SCP}$ takes values in 
 $\U$, that is, 
 $\overline{\mathcal F_{\SCP}}^T \, \mathcal F_{\SCP}  = 
 \id$.

For what follows it will be convenient to lift the mean curvature vector of $f^{\SCP}$ from
$T_{f^{\SCPt}(z)} \CP$ to $T_{\f^{\SCPt}(z)}  S^5$.
It is easy to verify that  the vectors $ \f^{\SCPt}_z, 
 \f^{\SCPt}_{\bar z}, i \f^{\SCPt}_{z}, i\f^{\SCPt}_{\bar z}, i\f^{\SCPt}$ span
 $(T_{\f^{\SCPt}(z)}  S^5)^{\C}$
and project under $\d \pi$ to $f^{\SCP}_z,f^{\SCP}_{\bar z},if^{\SCP}_z, 
 if^{\SCP}_{\bar z}, 0$ respectively.
In this sense we identify  the mean curvature vector 
$H = H_1 ie^{-\frac{1}{2}\omega^{\SCPt}} f^{\SCP}_z +  H_2 ie^{-\frac{1}{2}\omega^{\SCPt}} f^{\SCP}_{\bar z}$ of $f^{\SCP}$ with the vector
 $H = H_1 ie^{-\frac{1}{2}\omega^{\SCPt}} \f^{\SCP}_z  +  
 H_2 ie^{-\frac{1}{2}\omega^{\SCPt}} \f^{\SCP}_{\bar z}$.

\begin{Lemma}
 The coordinate frame $\mathcal{F}_{\SCP}$  of a Lagrangian immersion 
 into $\CP$ is a smooth map $\mathcal{F}_{\SCP}: \D \rightarrow \U$.
 In particular, $\det \mathcal{F}_{\SCP}$ is a smooth map  
 from $ \D $ to $S^1$.
 The Maurer-Cartan form 
\begin{equation}\label{eq:alphaCP}
 \alpha_{\SCP} = \mathcal F_{\SCP}^{-1} \d \mathcal F_{\SCP} = 
 \mathcal F_{\SCP}^{-1} 
 ({\mathcal F}_{\SCP})_z \d z
 +  \mathcal F_{\SCP}^{-1} (\mathcal F_{\SCP})_{\bar z} \d \bar z 
 =  \mathcal U_{\SCP}  \d z  
+  \mathcal V_{\SCP} \d \bar z 
\end{equation}
 can be computed as 
\begin{align}
 \mathcal U_{\SCP} = \begin{pmatrix} 
 \frac{1}{2}\omega^{\SCP}_z + \ell  & m & e^{\frac{1}{2}\omega^{\SCPt}}\\
  -Q^{\SCP} e^{-\omega^{\SCPt}} &- \frac{1}{2}\omega^{\SCP}_z+ \ell & 0\\
 0 & - e^{\frac{1}{2}\omega^{\SCPt}}& 0
 \end{pmatrix}, \quad 
 \mathcal V_{\SCP} =
 \begin{pmatrix} 
 - \frac{1}{2}\omega^{\SCP}_{\bar z}+ m
 & \overline {Q^{\SCP}  e^{-\omega^{\SCPt}}}  & 0 \\
 \ell & \frac{1}{2}\omega^{\SCP}_{\bar z} + m& e^{\frac{1}{2}\omega^{\SCPt}}\\
 - e^{\frac{1}{2}\omega^{\SCPt}}& 0 & 0
\end{pmatrix}, 
 \label{eq:UVCP}
\end{align}
 where $\ell = \langle H, \f^{\SCP}_{\bar z} \rangle$, 
 $m =  \langle H, \f^{\SCP}_z \rangle $, $H$ denotes
 the mean curvature vector, and $Q^{\SCP}$ is defined by
\begin{equation}\label{eq:QCP}
 Q^{\SCP} = \langle \f^{\SCP}_{zzz}, \f^{\SCP}\rangle.
\end{equation}
 Here we have used $\langle H, \f^{\SCP}_{\bar z} \rangle
 = - \langle \f^{\SCP}_{z}, H \rangle$ and 
 $\langle H, \f^{\SCP}_{z} \rangle
 = - \langle \f^{\SCP}_{\bar z}, H \rangle$.
Moreover, $m = - \bar \ell$ holds.
\end{Lemma}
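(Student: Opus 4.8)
The plan is to proceed in three stages: first establish that $\mathcal F_{\SCP}$ takes values in $\U$, then use unitarity to reduce the Maurer--Cartan computation to the single matrix $\mathcal U_{\SCP}$, and finally read off its entries by differentiating the defining relations of the horizontal lift once more.

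\textbf{Stage 1 (unitarity).} I would check that the three columns of $\mathcal F_{\SCP}$, namely $E_1 = e^{-\frac12\omega^{\SCPt}}\f^{\SCP}_z$, $E_2 = e^{-\frac12\omega^{\SCPt}}\f^{\SCP}_{\bar z}$ and $E_3 = \f^{\SCP}$, form a Hermitian orthonormal basis of $\C^3$. This is immediate from $\langle\f^{\SCP},\f^{\SCP}\rangle = 1$ (as $\f^{\SCP}\in S^5$), the horizontality $\langle\f^{\SCP}_z,\f^{\SCP}\rangle = \langle\f^{\SCP}_{\bar z},\f^{\SCP}\rangle = 0$ coming from \eqref{eq:horizontal}, the conformality \eqref{eq:null}, and the normalisation \eqref{eq:metric}. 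Hence $\overline{\mathcal F_{\SCP}}^T\mathcal F_{\SCP} = \id$, so $\mathcal F_{\SCP}$ is a smooth map into $\U$ and, taking determinants, $\det\mathcal F_{\SCP}$ is a smooth map into $S^1$.

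\textbf{Stages 2 and 3 (the Maurer--Cartan form).} Since $\mathcal F_{\SCP}$ is unitary, $\mathcal F_{\SCP}^{-1} = \overline{\mathcal F_{\SCP}}^T$ and $\alpha_{\SCP}$ is valued in the anti-Hermitian Lie algebra of $\U$; comparing the $\d z$- and $\d\bar z$-parts of $\alpha_{\SCP}^{*} = -\alpha_{\SCP}$ gives $\mathcal V_{\SCP} = -\overline{\mathcal U_{\SCP}}^T$, so it suffices to compute $\mathcal U_{\SCP} = \overline{\mathcal F_{\SCP}}^T(\mathcal F_{\SCP})_z$, whose $(i,j)$-entry is $\langle (E_j)_z, E_i\rangle$. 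Differentiating the Stage~1 relations once more yields the structure identities $\langle\f^{\SCP}_{zz},\f^{\SCP}\rangle = 0$ (from $\partial_z\langle\f^{\SCP}_z,\f^{\SCP}\rangle = 0$), $\langle\f^{\SCP}_{z\bar z},\f^{\SCP}\rangle = -e^{\omega^{\SCPt}}$ (from $\partial_{\bar z}\langle\f^{\SCP}_z,\f^{\SCP}\rangle = 0$), and $\langle\f^{\SCP}_{zz},\f^{\SCP}_{\bar z}\rangle = -Q^{\SCP}$ (from $\partial_z\langle\f^{\SCP}_{zz},\f^{\SCP}\rangle = 0$ together with \eqref{eq:QCP}). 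With these I can expand $\f^{\SCP}_{zz}$ and $\f^{\SCP}_{z\bar z}$ in the orthonormal frame $E_1,E_2,E_3$: the third column is immediate from $(E_3)_z = \f^{\SCP}_z = e^{\frac12\omega^{\SCPt}}E_1$, the entry $-Q^{\SCP}e^{-\omega^{\SCPt}}$ appears in position $(2,1)$, and the normal part of $\f^{\SCP}_{z\bar z}$ is the lifted mean curvature vector, so that $e^{-\omega^{\SCPt}}\langle\f^{\SCP}_{z\bar z},\f^{\SCP}_{z}\rangle = \langle H,\f^{\SCP}_z\rangle = m$ and $e^{-\omega^{\SCPt}}\langle\f^{\SCP}_{z\bar z},\f^{\SCP}_{\bar z}\rangle = \langle H,\f^{\SCP}_{\bar z}\rangle = \ell$. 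Reading off the nine inner products $\langle (E_j)_z,E_i\rangle$ then reproduces the stated $\mathcal U_{\SCP}$, and $\mathcal V_{\SCP} = -\overline{\mathcal U_{\SCP}}^T$ reproduces $\mathcal V_{\SCP}$.

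\textbf{Main obstacle.} The delicate entry is the diagonal $(\mathcal U_{\SCP})_{11} = \tfrac12\omega^{\SCP}_z + \ell$. Differentiating $\langle\f^{\SCP}_z,\f^{\SCP}_z\rangle = e^{\omega^{\SCPt}}$ gives only $\langle\f^{\SCP}_{zz},\f^{\SCP}_z\rangle + \langle\f^{\SCP}_z,\f^{\SCP}_{z\bar z}\rangle = \omega^{\SCP}_z e^{\omega^{\SCPt}}$, so isolating the mean-curvature term $\ell$ in the diagonal requires the further relation $\langle\f^{\SCP}_z,\f^{\SCP}_{z\bar z}\rangle = -e^{\omega^{\SCPt}}\ell$, which is exactly equivalent to $m = -\bar\ell$. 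I would therefore establish $m = -\bar\ell$ first, either from the anti-Hermitian symmetry $\mathcal V_{\SCP} = -\overline{\mathcal U_{\SCP}}^T$ (comparing the $(1,1)$ and $(2,2)$ entries) or, geometrically, from the reality of the lifted mean curvature vector, written as $H = i\,(H_1 E_1 + H_2 E_2)$ with $H_1 = \overline{H_2}$. This single relation both disentangles the diagonal and closes up all remaining entries consistently.
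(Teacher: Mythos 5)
Your proposal is correct: the orthonormality of the columns $E_1,E_2,E_3$ gives $\mathcal F_{\SCP}\in\U$, the anti-Hermitian symmetry reduces everything to $\mathcal U_{\SCP}=\overline{\mathcal F_{\SCP}}^T(\mathcal F_{\SCP})_z$, and your differentiated identities together with $(\f^{\SCP}_{z\bar z})^{\mathrm{horiz}}=e^{\omega^{\SCPt}}H$ and $m=-\bar\ell$ (from the reality of the lifted mean curvature vector) reproduce every entry of \eqref{eq:UVCP}. The paper itself omits the proof and defers to \cite{MM,M}, and your argument is exactly the standard computation being invoked there, so there is nothing to correct.
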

\begin{Corollary}
 For $\alpha_{\SCP}$ in \eqref{eq:alphaCP}, 
 the following statements hold, see for example  \cite[Section 2.1]{M}$:$
\begin{enumerate}
\item The {\rm mean curvature 1-form} $\sigma_H^{\SCP} 
 =  \Omega(H, \d \f^{\SCP})$
  satisfies $i \sigma_H^{\SCP} = \langle H, \d\f^{\SCP} \rangle 
 = \frac{1}{2} {\rm trace} ( \alpha_{\SCP})$.
\item  The $\alpha_{\SCP}$ satisfies the Maurer-Cartan equations 
 if and only if
 \begin{align}\label{eq:FundaeqCP}
&\omega^{\SCP}_{z \bar z}+ \left(1+ \frac{1}{2}|H|^2\right) e^{\omega^{\SCPt}} -|Q^{\SCP}|^2e^{- 2 \omega^{\SCPt}}  =0, \\\label{eq:FundaeqCP2}
 &\d \sigma^{\SCP}_H=0, \quad Q^{\SCP}_{\bar z} 
 e^{-2 \omega^{\SCPt}}= - (\ell e^{- \omega^{\SCPt}})_z.
\end{align}
\end{enumerate}
\end{Corollary}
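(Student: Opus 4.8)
The plan is to prove both assertions by direct computation from the explicit matrices $\mathcal U_{\SCP}, \mathcal V_{\SCP}$ in \eqref{eq:UVCP}, using only the definitions $\ell=\langle H,\f^{\SCP}_{\bar z}\rangle$, $m=\langle H,\f^{\SCP}_z\rangle$, the reality $m=-\bar\ell$, and the normalizations \eqref{eq:metric}.

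For (1) I would first read the traces off \eqref{eq:UVCP}: the diagonal of $\mathcal U_{\SCP}$ sums to $2\ell$ and that of $\mathcal V_{\SCP}$ to $2m$, so $\tfrac12\operatorname{trace}(\alpha_{\SCP})=\ell\,\d z+m\,\d\bar z$. Next I would compute $\langle H,\d\f^{\SCP}\rangle$ from $\d\f^{\SCP}=\f^{\SCP}_z\,\d z+\f^{\SCP}_{\bar z}\,\d\bar z$. The one genuinely delicate point is that $\langle\cdot,\cdot\rangle$ is antilinear in the second slot, so one must \emph{not} simply write $\langle H,\d\f^{\SCP}\rangle=m\,\d z+\ell\,\d\bar z$; instead I would expand in the real coframe via $\f^{\SCP}_x=\f^{\SCP}_z+\f^{\SCP}_{\bar z}$ and $\f^{\SCP}_y=i(\f^{\SCP}_z-\f^{\SCP}_{\bar z})$, so that pulling $i$ out of the second argument produces $\bar i=-i$, and then re-collect the $\d z,\d\bar z$ terms. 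The outcome is $\langle H,\d\f^{\SCP}\rangle=\ell\,\d z+m\,\d\bar z=\tfrac12\operatorname{trace}(\alpha_{\SCP})$. Finally, $m=-\bar\ell$ rewrites this as $\ell\,\d z-\overline{\ell\,\d z}$, which is purely imaginary; hence its real part $g(H,\d\f^{\SCP})$ vanishes and its imaginary part is $\Omega(H,\d\f^{\SCP})=\sigma_H^{\SCP}$, giving $i\sigma_H^{\SCP}=\langle H,\d\f^{\SCP}\rangle=\tfrac12\operatorname{trace}(\alpha_{\SCP})$.

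For (2), the Maurer--Cartan equation for $\alpha_{\SCP}=\mathcal U_{\SCP}\,\d z+\mathcal V_{\SCP}\,\d\bar z$ is equivalent to the single $3\times 3$ zero-curvature identity $(\mathcal U_{\SCP})_{\bar z}-(\mathcal V_{\SCP})_z-[\mathcal U_{\SCP},\mathcal V_{\SCP}]=0$. I would differentiate the entries of \eqref{eq:UVCP}, expand the commutator, and extract scalar equations slot by slot. The trace annihilates the commutator and leaves $\ell_{\bar z}=m_z$, which by (1) is exactly $\d\sigma^{\SCP}_H=0$. The difference of the $(1,1)$ and $(2,2)$ slots cancels the $\ell_{\bar z},m_z$ terms and, after inserting $m\ell=-|\ell|^2$ (from $m=-\bar\ell$) together with $|\ell|^2=\tfrac12|H|^2e^{\omega^{\SCPt}}$, yields $\omega^{\SCP}_{z\bar z}+(1+\tfrac12|H|^2)e^{\omega^{\SCPt}}-|Q^{\SCP}|^2e^{-2\omega^{\SCPt}}=0$. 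The identity $|\ell|^2=\tfrac12|H|^2e^{\omega^{\SCPt}}$ itself follows by substituting the definition of $H$ into $\ell,m$ and using \eqref{eq:metric}, which give $\ell=iH_2e^{\frac12\omega^{\SCPt}}$, $m=iH_1e^{\frac12\omega^{\SCPt}}$ and hence $|H|^2=|H_1|^2+|H_2|^2=2|\ell|^2e^{-\omega^{\SCPt}}$. Finally, in the $(2,1)$ slot the $Q^{\SCP}\omega^{\SCP}_{\bar z}$-contributions cancel, leaving $Q^{\SCP}_{\bar z}e^{-\omega^{\SCPt}}=\ell\,\omega^{\SCP}_z-\ell_z$, which is precisely the stated Codazzi relation $Q^{\SCP}_{\bar z}e^{-2\omega^{\SCPt}}=-(\ell e^{-\omega^{\SCPt}})_z$.

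The algebra is routine; the two places where I expect friction are the antilinearity bookkeeping in (1) (which swaps the apparent roles of $\ell$ and $m$) and, in (2), confirming that the three displayed equations are not merely \emph{necessary} but also \emph{sufficient} for the full matrix identity $\Phi:=(\mathcal U_{\SCP})_{\bar z}-(\mathcal V_{\SCP})_z-[\mathcal U_{\SCP},\mathcal V_{\SCP}]=0$. The efficient way to organize the latter is to note that $\mathcal F_{\SCP}\in\U$ forces $\mathcal V_{\SCP}=-\overline{\mathcal U_{\SCP}}^T$, whence $\Phi$ is Hermitian; it therefore suffices to treat the diagonal and the upper triangle. One then checks directly that the $(1,3)$ and $(2,3)$ slots vanish identically (the $e^{\frac12\omega^{\SCPt}}$-entries cancel against the corresponding commutator terms) and that the $(3,3)$ slot is trivial, so that the $(1,2)$ slot is merely the conjugate of the $(2,1)$ Codazzi relation while the two remaining diagonal conditions are equivalent to the pair $\{$Gauss, $\d\sigma^{\SCP}_H=0\}$. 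Tracking the powers of $e^{\omega^{\SCPt}}$ and the signs through these cancellations is the only real source of error.
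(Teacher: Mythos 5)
Your proposal is correct: the trace computation, the antilinearity bookkeeping giving $\langle H,\d\f^{\SCP}\rangle=\ell\,\d z+m\,\d\bar z$, the identity $|\ell|^2=\tfrac12|H|^2e^{\omega^{\SCPt}}$, and the slot-by-slot reduction of $(\mathcal U_{\SCP})_{\bar z}-(\mathcal V_{\SCP})_z-[\mathcal U_{\SCP},\mathcal V_{\SCP}]=0$ (including the observation that $\mathcal V_{\SCP}=-\overline{\mathcal U_{\SCP}}^T$ makes the curvature Hermitian, so only the diagonal and the $(2,1)$ slot carry independent conditions) all check out. The paper itself gives no proof, deferring to \cite[Section 2.1]{M}; your direct zero-curvature verification is exactly the standard argument being invoked there.
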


 Then the fundamental theorem for Lagrangian immersions 
 into $\CP$ is stated as follows:

 \begin{Theorem}[Fundamental theorem for Lagrangian immersions into
 $\CP$]\label{thm:fundCP}
 Assume $f^{\SCP}:\D \rightarrow  \CP$ is a conformal 
 Lagrangian immersion and let $\f^{\SCP}$ denote one of 
 its horizontal lifts and $\mathcal{F}_{\SCP}$ 
 the corresponding coordinate frame \eqref{eq:coordinateframeCP}.
 Then $\alpha_{\SCP} = \mathcal{F}_{\SCP}^{-1} 
 \d \mathcal{F}_{\SCP} 
 = \mathcal{U}_{\SCP} \d z + \mathcal{V}_{\SCP} \d \bar{z}$ 
 with $\mathcal{U}_{\SCP}$ and $\mathcal{V}_{\SCP}$ 
 have the form \eqref{eq:UVCP} 
 and their coefficients satisfy the equations stated 
 in \eqref{eq:FundaeqCP} and \eqref{eq:FundaeqCP2}.
 
 Conversely, given  functions $\omega^{\SCP}, H$ on $\D$ together with 
 a cubic differential $Q^{\SCP}\d z^3$ and a $1$-form $\sigma^{\SCP}_H = 
 \ell \d z  + m \d \bar{z}$ on $\D$ such that the conditions 
 \eqref{eq:FundaeqCP} and \eqref{eq:FundaeqCP2} are 
 satisfied $($with $\langle H, \f^{\SCP}_{\bar{z}} \rangle$  
 replaced by $m$  and $\langle H, \f^{\SCP}_z \rangle$ 
replaced by $\ell$$)$,
 then there exists a solution $\mathcal{F}_{\SCP} \in \U$ 
 such that 
 $\f^{\SCP} = \mathcal{F}_{\SCP} e_3$ is a horizontal lift 
 of the conformal 
 Lagrangian immersion $f^{\SCP} = \pi \circ \f^{\SCP}$.
\end{Theorem}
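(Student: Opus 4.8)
The plan is to treat the two directions separately, leaning on the computation of $\alpha_{\SCP}$ in the preceding Lemma and on the equivalence in the Corollary for the integrability bookkeeping. For the \emph{direct} direction there is essentially nothing new to do: given the conformal Lagrangian immersion $f^{\SCP}$, a horizontal lift $\f^{\SCP}$ exists by the discussion around \eqref{eq:horizontal}, and the coordinate frame $\mathcal F_{\SCP}$ of \eqref{eq:coordinateframeCP} takes values in $\U$. The Lemma computes $\alpha_{\SCP} = \mathcal F_{\SCP}^{-1}\d\mathcal F_{\SCP}$ and shows it has the form \eqref{eq:UVCP}; since $\alpha_{\SCP}$ is by construction the Maurer--Cartan form of a smooth map into $\U$, it automatically satisfies the Maurer--Cartan structure equation, and by the Corollary this is equivalent to \eqref{eq:FundaeqCP} and \eqref{eq:FundaeqCP2}. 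Thus the coefficients of $\mathcal U_{\SCP}$ and $\mathcal V_{\SCP}$ satisfy the stated equations.

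For the \emph{converse} I would start from the prescribed data $\omega^{\SCP}, H, Q^{\SCP}$ and $\sigma^{\SCP}_H = \ell\,\d z + m\,\d\bar z$, \emph{define} matrices $\mathcal U_{\SCP}, \mathcal V_{\SCP}$ by the right-hand sides of \eqref{eq:UVCP}, and set $\alpha_{\SCP} = \mathcal U_{\SCP}\,\d z + \mathcal V_{\SCP}\,\d\bar z$. Two observations drive the argument. First, using $m = -\bar\ell$ one checks directly from \eqref{eq:UVCP} that $\mathcal V_{\SCP} = -\overline{\mathcal U_{\SCP}}^{\,T}$, so $\alpha_{\SCP}$ takes values in the Lie algebra $\Lie(\U)$ of skew-Hermitian matrices. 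Second, by the equivalence in the Corollary, the hypotheses \eqref{eq:FundaeqCP} and \eqref{eq:FundaeqCP2} are exactly the condition that $\alpha_{\SCP}$ satisfy the Maurer--Cartan equation $\d\alpha_{\SCP} + \alpha_{\SCP}\wedge\alpha_{\SCP} = 0$. Since $\D$ is contractible, hence simply connected, the standard integration theorem for flat connections (Frobenius) produces a smooth map $\mathcal F_{\SCP}:\D\to\GL$ with $\mathcal F_{\SCP}^{-1}\d\mathcal F_{\SCP} = \alpha_{\SCP}$, unique up to multiplication by a constant matrix on the left. Fixing the integration constant so that $\mathcal F_{\SCP}(z_0)\in\U$, and using that $\alpha_{\SCP}$ is $\Lie(\U)$-valued (so that $\overline{\mathcal F_{\SCP}}^{\,T}\mathcal F_{\SCP}$ solves a linear system whose unique solution through $\id$ is the constant $\id$), one concludes $\mathcal F_{\SCP}(\D)\subset\U$.

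It then remains to recognize $\mathcal F_{\SCP}$ as a coordinate frame. Writing $\mathcal F_{\SCP} = (F_1, F_2, F_3)$ by columns and reading off the third columns of $(\mathcal F_{\SCP})_z = \mathcal F_{\SCP}\mathcal U_{\SCP}$ and $(\mathcal F_{\SCP})_{\bar z} = \mathcal F_{\SCP}\mathcal V_{\SCP}$ gives $(F_3)_z = e^{\frac12\omega^{\SCPt}}F_1$ and $(F_3)_{\bar z} = e^{\frac12\omega^{\SCPt}}F_2$. Setting $\f^{\SCP} = \mathcal F_{\SCP}e_3 = F_3$, this says precisely $F_1 = e^{-\frac12\omega^{\SCPt}}\f^{\SCP}_z$ and $F_2 = e^{-\frac12\omega^{\SCPt}}\f^{\SCP}_{\bar z}$, so $\mathcal F_{\SCP}$ is the coordinate frame \eqref{eq:coordinateframeCP} of $\f^{\SCP}$. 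Because the columns of $\mathcal F_{\SCP}\in\U$ are orthonormal for $\langle\,,\,\rangle$, the horizontality $\langle\f^{\SCP}_z,\f^{\SCP}\rangle = \langle\f^{\SCP}_{\bar z},\f^{\SCP}\rangle = 0$, the null and conformality relations $\langle\f^{\SCP}_z,\f^{\SCP}_z\rangle = \langle\f^{\SCP}_{\bar z},\f^{\SCP}_{\bar z}\rangle = e^{\omega^{\SCPt}}$ and $\langle\f^{\SCP}_z,\f^{\SCP}_{\bar z}\rangle = 0$, and the Legendrian property all follow immediately; hence $f^{\SCP} = \pi\circ\f^{\SCP}$ is a conformal Lagrangian immersion with horizontal lift $\f^{\SCP}$.

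I expect the main work, and the only genuinely delicate point, to be the final consistency check: that the geometric invariants recomputed from the constructed $\f^{\SCP}$ --- the conformal factor $\omega^{\SCP}$, the cubic differential $Q^{\SCP} = \langle\f^{\SCP}_{zzz},\f^{\SCP}\rangle$, and the mean-curvature data encoded by $\ell, m$ --- agree with the prescribed data, so that the converse genuinely inverts the direct construction. Concretely this amounts to differentiating $F_1, F_2, F_3$ once more via $(\mathcal F_{\SCP})_z = \mathcal F_{\SCP}\mathcal U_{\SCP}$ and $(\mathcal F_{\SCP})_{\bar z} = \mathcal F_{\SCP}\mathcal V_{\SCP}$ and matching the resulting coefficients entry-by-entry against \eqref{eq:UVCP}; the two scalar relations in \eqref{eq:FundaeqCP2} reappear here as the trace and off-diagonal parts of the zero-curvature equation, and keeping these separate from the principal equation \eqref{eq:FundaeqCP} is the bookkeeping one must handle with care.
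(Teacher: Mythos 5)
Your proof is correct and follows exactly the route the paper intends: the survey states this theorem without proof (deferring to \cite{MM,M}), and the preceding Lemma and Corollary are set up precisely so that the direct direction is immediate, while the converse reduces to integrating the $\operatorname{Lie}(\U)$-valued $1$-form $\alpha_{\SCP}$ over the simply connected domain $\D$ and then recovering $\f^{\SCP}=\mathcal F_{\SCP}e_3$ together with its derivatives from the third columns of $\mathcal U_{\SCP}$ and $\mathcal V_{\SCP}$. The only hypothesis worth making explicit is the reality condition $m=-\bar\ell$, which you correctly single out as the reason $\mathcal V_{\SCP}=-\overline{\mathcal U_{\SCP}}^{\,T}$, hence the reason the constructed frame stays in $\U$.
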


\subsection{Minimal Lagrangian surfaces in $\CP$}
 If we restrict to minimal Lagrangian surfaces, then the equations 
 (\ref{eq:alphaCP}) and (\ref{eq:UVCP}) show that the determinant of the coordinate 
 frame is a constant (in $S^1$).
 So we can, and will, assume from here on that the horizontal lift of the given minimal immersion into $\CP$ is scaled  (by a constant in $S^1$) such that the corresponding coordinate frame $\mathcal{F}_{\SCP}$ is in $\SU$.
 It is clear that the Maurer-Cartan form $ \alpha_{\SCP} = \mathcal F_{\SCP}^{-1} \d \mathcal F_{\SCP} = \mathcal U_{\SCP} \d z + \mathcal V_{\SCP} \d  \bar z$ of the minimal 
 Lagrangian surface 
 is given by  
\begin{align}
 \mathcal U_{\SCP} = \begin{pmatrix} 
 \frac{1}{2}\omega^{\SCP}_z   & 0 & e^{\frac{1}{2}\omega^{\SCPt}}\\
  -Q^{\SCP} e^{-\omega^{\SCPt}}&- \frac{1}{2}\omega^{\SCP}_z & 0\\
 0 & - e^{\frac{1}{2}\omega^{\SCPt}}& 0
 \end{pmatrix}, \quad 
 \mathcal V_{\SCP} =
 \begin{pmatrix} 
 - \frac{1}{2}\omega^{\SCP}_{\bar z}
 &  \overline {Q^{\SCP}} e^{-\omega^{\SCPt}}  & 0 \\
 0 & \frac{1}{2}\omega^{\SCP}_{\bar z} & e^{\frac{1}{2}\omega^{\SCPt}}\\
 - e^{\frac{1}{2}\omega^{\SCPt}}& 0 & 0
\end{pmatrix}, 
 \label{eq:MLUVCP}
\end{align}
 and 
 the integrability conditions are 
\begin{equation} \label{eq:TzitzeicaCP}
 \omega^{\SCP}_{z \bar{z}} +  e^{\omega^{\SCPt}} -
 |Q^{\SCP}|^2 e^{-2 \omega^{\SCPt}} =0, \quad 
 Q^{\SCP}_{\bar{z}} = 0.
 \end{equation}
  The first equation \eqref{eq:TzitzeicaCP} is commonly called the 
  \textit{Tzitz\'eica equation}.
  From the definition of $Q^{\SCP}$ in \eqref{eq:QCP}, it is clear that 
 \[
  C^{\SCP}(z) = Q^{\SCP}(z) \, \d z^3 
 \]
 is the holomorphic cubic differential of the  
 minimal Lagrangian surface $f^{\SCP}$.

\begin{Remark}
 The fundamental theorem in Theorem \ref{thm:fundCP} is 
 still true for a minimal Lagrangian immersions into $\CP$.
 \end{Remark}

 \subsection{Associated families of minimal surfaces and 
 flat connections}
 From \eqref{eq:TzitzeicaCP}, it is easy to see that there exists a one-parameter 
 family of solutions of \eqref{eq:TzitzeicaCP} 
 parametrized by $\l \in S^1$; 
 The corresponding family $\{ {\omega_{\SCPt}^\l}, 
 C_{\SCP}^\l\}_{\l \in S^1}$
 then satisfies
\[
  \omega_{\SCP}^\l = \omega^{\SCP}, \quad C_{\SCP}^{\l} = 
 \l^{-3} Q^{\SCP} \d z^3.
\]
 As a consequence, there exists a one-parameter family of 
 minimal Lagrangian surfaces 
 $\{\hat f_{\SCP}^{\l}\}_{\l \in S^1}$ such that $\hat f_{\SCP}^{\l}
 |_{\l=1} = f^{\SCP}$.
 The family $\{\hat f_{\SCP}^{\l}\}_{\l \in S^1}$ will be called 
 the \textit{associated 
 family} of $f^{\SCP}$.
 Let $\hat {\mathcal F}^{\l}_{\SCP}$ 
 be the coordinate frame of $\hat f_{\SCP}^{\l}$.
 Then the Maurer-Cartan form $\hat \alpha_{\SCP}^{\l}
 = \hat {\mathcal U}^{\l}_{\SCP}\d z + 
 \hat {\mathcal V}^{\l}_{\SCP} \d \bar z$ of 
 $\hat {\mathcal F}^{\l}_{\SCP}$
 for the associated family 
 $\{\hat f_{\SCP}^{\l}\}_{\lambda \in S^1}$
 is given by ${\mathcal U}_{\SCP}$ and ${\mathcal V}_{\SCP}$ 
 as in 
 \eqref{eq:MLUVCP} where we have replaced 
 $Q^{\SCP}$ and $\overline{ Q^{\SCP}}$ by 
 $\lambda^{-3} Q^{\SCP}$ and $\lambda^3 \overline{Q^{\SCP}}$, 
 respectively.
 Then consider the gauge transformation $G^{\l}$ given by 
 \begin{equation}\label{eq:extCP}
 F^{\l}_{\SCP}  =\hat {\mathcal F}^{\l}_{\SCP} G^{\l}, \quad 
 G^{\l} = \di ( \lambda, \lambda^{-1}, 1).
 \end{equation}
 This implies
\begin{equation}\label{eq:alphaL}
\alpha_{\SCP}^{\l} =  (F^{\l}_{\SCP})^{-1}  \d F^{\l}_{\SCP} 
 = U_{\SCP}^{\l} \d z + V_{\SCP}^{\l} \d \bar z	   
\end{equation}
 with $U_{\SCP}^{\l} = (G^{\l})^{-1}\hat{\mathcal U}^{\l}_{\SCP} G^{\l}$ 
 and $V_{\SCP}^{\l} = (G^{\l})^{-1}\hat{\mathcal V}^{\l}_{\SCP} G^{\l}$. 
 It is easy to see that $\hat{\mathcal F}^{\l}_{\SCP} G^{\l} e_3= 
 \hat{\mathcal F}^{\l}_{\SCP} e_3$. 
 Therefore 
 $f_{\SCP}^{\l} : = \pi \circ (\hat{\mathcal F}^{\l}_{\SCP} G^{\l}e_3) 
 =  \pi \circ (\hat{\mathcal F}^{\l}_{\SCP} e_3)  = \hat{f}_{\SCP}^\l$. 
 Hence we will not distinguish between 
 $\{\hat f_{\SCP}^{\lambda}\}_{\lambda \in S^1}$ and 
 $\{f_{\SCP}^{\lambda}\}_{\lambda \in S^1}$, and both families will be 
 called the associated family of $f^{\SCP}$, and $F_{\SCP}^{\l}$ 
 will also  be called the coordinate frame of $f_{\SCP}^{\l}$.
  
 From the discussion just above we derive a 
 family of Maurer-Cartan forms $\alpha_{\SCP}^{\l}$  
 in \eqref{eq:alphaL}
 of  minimal Lagrangian surfaces  from $\D$  to $\CP$ . 
 They can be computed explicitly as 
  \begin{equation}\label{eq:alphalambda-origCP}
 \alpha_{\SCP}^{\l} = U_{\SCP}^{\lambda} \d z + V_{\SCP}^{\lambda} \d\bar z,
 \end{equation} 
 for $\lambda \in \C^{\times}$, where 
 $U^{\SCP}_{\lambda}$ and $V_{\SCP}^{\lambda}$ are  given by  
\begin{align*}
 U_{\SCP}^{\lambda} =
 \begin{pmatrix} 
\frac{1}{2}{\omega^{\SCP}_z} & 0 &
 \lambda^{-1} e^{\frac{1}{2}\omega^{\SCPt}}\\
- \lambda^{-1} Q^{\SCP} e^{-\omega^{\SCPt}} & 
 - \frac{1}{2}\omega^{\SCP}_z & 0\\
0 & -\lambda^{-1}e^{\frac{1}{2}\omega^{\SCPt}}& 0
\end{pmatrix},   \quad 
V_{\SCP}^{\lambda} =
 \begin{pmatrix} 
- \frac{1}{2}\omega_{\bar z}
 &  \lambda \overline{ Q^{\SCP}} e^{-\omega^{\SCPt}} & 0 \\
 0 & \frac{1}{2}\omega^{\SCP}_{\bar z} & \lambda e^{\frac{1}{2}\omega^{\SCP}}\\
-\lambda e^{\frac{1}{2}\omega^{\SCPt}}& 0 & 0
\end{pmatrix}. 
\end{align*}
 It is clear that $\alpha_{\SCP}^{\lambda}|_{\lambda=1}$ is 
 the Maurer-Cartan form of the coordinate frame 
 $\mathcal F_{\SCP}$ of $f^{\SCP}$. 
 Then by the discussion in the previous section, we have the following 
 theorem. 
\begin{Theorem}[\cite{MM}]\label{thm:flatconnectionsCP}
 Let $f^{\SCP} : \D \to \CP$ be a minimal Lagrangian surface in $\CP$
 and let $\alpha_{\SCP}^{\l}$ be the family of Maurer-Cartan 
 forms defined in \eqref{eq:alphalambda-origCP}.
 Then $\d + \alpha_{\SCP}^{\lambda}$ gives a family of flat connections
 on $\D \times \SU$.

 Conversely, given a family of connections  $\d 
 + \alpha_{\SCP}^{\lambda}$ 
 on $\D \times \SU$, where $\alpha_{\SCP}^{\lambda}$ 
 is as in \eqref{eq:alphalambda-origCP}, then  
 $\d + \alpha_{\SCP}^{\lambda}$ belongs 
 to an associated 
 family of minimal Lagrangian immersions into $\CP$ 
 if and only if the connection is flat for all 
 $\lambda \in S^1$.
\end{Theorem}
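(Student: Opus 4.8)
The plan is to reduce flatness to the Maurer--Cartan equation and then exploit the $\lambda$-homogeneity built into \eqref{eq:alphalambda-origCP}. Flatness of $\d+\alpha_{\SCP}^\lambda$ is equivalent to $\d\alpha_{\SCP}^\lambda+\alpha_{\SCP}^\lambda\wedge\alpha_{\SCP}^\lambda=0$, that is, to
\[
(V_{\SCP}^\lambda)_z-(U_{\SCP}^\lambda)_{\bar z}+[U_{\SCP}^\lambda,V_{\SCP}^\lambda]=0.
\]
For this to be a connection on $\D\times\SU$ I would first note that for $\lambda\in S^1$ one has $V_{\SCP}^\lambda=-\overline{U_{\SCP}^\lambda}^T$ and $\tr U_{\SCP}^\lambda=\tr V_{\SCP}^\lambda=0$, so $\alpha_{\SCP}^\lambda$ is $\mathfrak{su}_3$-valued (equivalently $G^\lambda=\di(\lambda,\lambda^{-1},1)\in\SU$ when $|\lambda|=1$). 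Reading off \eqref{eq:alphalambda-origCP} I would then decompose $U_{\SCP}^\lambda=U_0+\lambda^{-1}U_{-1}$ and $V_{\SCP}^\lambda=V_0+\lambda V_1$, with $U_0,V_0$ the diagonal parts (entries $\pm\tfrac12\omega^{\SCP}_z$ and $\mp\tfrac12\omega^{\SCP}_{\bar z}$), while $U_{-1}$ has nonzero entries only in the positions $(1,3),(2,1),(3,2)$ and $V_1$ only in $(1,2),(2,3),(3,1)$, built from $e^{\frac12\omega^{\SCPt}}$, $Q^{\SCP}e^{-\omega^{\SCPt}}$ and $\overline{Q^{\SCP}}e^{-\omega^{\SCPt}}$.

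The structural observation that drives the whole argument is that the curvature then collapses into three $\lambda$-homogeneous pieces $\lambda\,C_1+C_0+\lambda^{-1}C_{-1}$, where
\begin{align*}
C_1 &= (V_1)_z+[U_0,V_1], \\
C_0 &= (V_0)_z-(U_0)_{\bar z}+[U_{-1},V_1], \\
C_{-1} &= -(U_{-1})_{\bar z}+[U_{-1},V_0]
\end{align*}
(using $[U_0,V_0]=0$), and that $C_1,C_0,C_{-1}$ occupy the three cyclic diagonals of a $3\times3$ matrix and hence have pairwise disjoint support: $C_0$ is diagonal, $C_1$ lives in $(1,2),(2,3),(3,1)$, and $C_{-1}$ in $(1,3),(2,1),(3,2)$. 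Therefore the curvature vanishes for one $\lambda\in\C^\times$ if and only if it vanishes for all, if and only if $C_1=C_0=C_{-1}=0$. Evaluating the three matrices, and abbreviating $a=e^{\frac12\omega^{\SCPt}}$ so that $a_z=\tfrac12\omega^{\SCP}_z\,a$ and $a_{\bar z}=\tfrac12\omega^{\SCP}_{\bar z}\,a$, one finds that every entry built from $a$ cancels against the diagonal commutators; $C_{-1}$ reduces to its $(2,1)$-entry $Q^{\SCP}_{\bar z}\,e^{-\omega^{\SCPt}}$, symmetrically $C_1$ to its $(1,2)$-entry $(\overline{Q^{\SCP}})_z\,e^{-\omega^{\SCPt}}$, and $C_0$ to the diagonal entries $\mp\bigl(\omega^{\SCP}_{z\bar z}+e^{\omega^{\SCPt}}-|Q^{\SCP}|^2e^{-2\omega^{\SCPt}}\bigr)$, the term $[U_{-1},V_1]$ supplying $\pm(|Q^{\SCP}|^2e^{-2\omega^{\SCPt}}-e^{\omega^{\SCPt}})$. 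Thus $C_1=C_0=C_{-1}=0$ is exactly the system \eqref{eq:TzitzeicaCP}.

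Both directions now follow quickly. For the forward statement, if $f^{\SCP}$ is minimal Lagrangian then \eqref{eq:TzitzeicaCP} holds (Theorem \ref{thm:fundCP} in its minimal version, cf.\ the Remark following it), so all three pieces vanish and $\d+\alpha_{\SCP}^\lambda$ is flat for every $\lambda$, in particular on $\D\times\SU$ for $\lambda\in S^1$. For the converse, flatness for all $\lambda\in S^1$ --- indeed, by the disjoint supports a single $\lambda$ already suffices --- forces \eqref{eq:TzitzeicaCP} on the data $\omega^{\SCP},Q^{\SCP}$; applying the minimal Fundamental Theorem \ref{thm:fundCP} on the contractible $\D$ yields a minimal Lagrangian immersion realizing these data, whose associated family, assembled as in \eqref{eq:extCP}--\eqref{eq:alphalambda-origCP}, has exactly the given $\alpha_{\SCP}^\lambda$ as Maurer--Cartan forms; hence $\d+\alpha_{\SCP}^\lambda$ belongs to an associated family. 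The remaining implication of the ``if and only if'' is immediate, since each member of an associated family is the coordinate frame of a minimal Lagrangian surface and so is flat by the forward direction.

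The computation itself is routine matrix algebra; the only point demanding care is the bookkeeping that isolates the content-bearing entries --- the $(2,1)$- and $(1,2)$-entries at $\lambda^{\mp1}$ and the diagonal at $\lambda^0$ --- from the many entries that cancel via the identities for $a_z,a_{\bar z}$. The genuinely conceptual point, and the reason the single equation at $\lambda=1$ already propagates to the entire family, is the disjointness of the supports of $C_1,C_0,C_{-1}$, which reflects the $\Z$-grading of $\sl$ underlying the $A_2^{(2)}$ structure together with the fact that the loop-parameter insertion \eqref{eq:extCP} was arranged to be homogeneous for it. I expect no essential obstacle beyond this, the principal risk being a sign or index slip in the commutators, which the disjoint-support structure conveniently compartmentalizes.
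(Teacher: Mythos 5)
Your proposal is correct and takes essentially the same route as the paper: flatness of $\d+\alpha_{\SCP}^{\lambda}$ is reduced to the $\lambda$-independent system \eqref{eq:TzitzeicaCP}, and the converse is completed by the fundamental theorem (Theorem \ref{thm:fundCP} in its minimal version). Your explicit grading of the curvature into three pieces supported on disjoint cyclic diagonals --- so that flatness at a single $\lambda\in\C^{\times}$ already forces \eqref{eq:TzitzeicaCP} --- is a correct and slightly sharper packaging of the same $S^1$-invariance ($|\lambda^{-3}Q^{\SCP}|^{2}=|Q^{\SCP}|^{2}$) that the paper uses to build the associated family.
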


%
\section{Minimal Lagrangian surfaces in $\CH$}\label{sc:mLiCH}

In this section, we discuss a loop group formulation of minimal Lagrangian surfaces in the complex hyperbolic plane $\CH$. Most of what we present can be found in \cite{ML}.
We will use complex parameters and restrict generally to surfaces defined on some 
open and simply connected domain $\D$ of the complex plane $\C$.

\subsection{Basic definitions}
 The space  $\CH$ can be realized as the open unit disk in 
 $\C^2$ relative to the usual positive definite Hermitian inner product. 
 But for our purposes it will be more convenient to realize 
 $\CH$ in the form
\begin{equation*}
\CH = \{ [w_1, w_2, 1] \in \CP \;|\; |w_1|^2 + |w_2|^2 - 1 < 0 \}.
\end{equation*}
 It is natural then to consider on $\C^3_1$ 
 the indefinite Hermitian inner form of signature $(1, 2)$
 given by 
\begin{equation}
\langle u,v \rangle = u_1 \bar{v}_1 +  u_2 \bar{v}_2 -  u_3 \bar{v}_3.
\end{equation}
Vectors in $\C^3_1$ satisfying  $\langle u, u \rangle < 0$ 
 will be called ``negative''.
 Then the set $(\C^3_1)_-$ of negative vectors and the 
 ``negative sphere''
\begin{equation}
 H^5_1 = \{ u \in \C^3_1\;|\; \langle u,u\rangle = -1\},
\end{equation}
 and the natural (submersions) projections 
 $\pi: (\C^3_1)_- \rightarrow \CH$  and  $\pi: H^5_1 \rightarrow \CH$ 
 will be the central objects of this section. 
 (Note that we use the same letter for both projections.)
This is called the Boothby-Wang type fibration, \cite{BW, DF}.
 For later purposes we point out that the tangent space at 
 $u \in H_1^5$ is
\begin{equation*}
 T_u H_1^5  = \{ v \in \C^3_1\;|\;\Re \langle v,u \rangle =0\}.
\end{equation*}
 Moreover, the space 
 $\mathcal{H}_u = \{ v \in T_u H_1^5\;|\;  \langle v,u \rangle =0\}$
 is a natural horizontal subspace.
The form $\langle \> ,\> \rangle$ is a positive definite Hermitian inner product on  
$\mathcal{H}_u$  with real and imaginary components
\[
 \langle \>,\> \rangle = g(\>,\>) + i \Omega(\>,\>).
\]
 Hence $g$ is positive definite and $\Omega$ is a symplectic form.
 Clearly, the isometry group of $\langle \>,\> \rangle$ 
 will be of importance in our  setting.
 Put
\begin{equation*}
\UI = \{ A: \C^3_1 \rightarrow \C^3_1\;|\;  \mbox{$\C$-linear satisfying} \hspace{1mm}\langle Au,Av \rangle = \langle u,v \rangle \}, 
 \end{equation*}
 and $\SUI = \{A \in \UI\;|\; \det A = 1 \}$.
 We note $\UI = S^1 \cdot \SUI$ and that these are connected, real, reductive Lie groups with their centers consisting of multiples of the identity transformation. 

The groups $\UI$ and  $\SUI$ act naturally  on $H_1^5$ and on $\CH$. 
The group $\UI$ acts transitively on both spaces.
Moreover,  this action is equivariant relative to $\pi$
and holomorphic on $\CH$. 
Using the base point $e_3 = (0,0,1)^T$ it is easy to verify
\[
 H_1^5 \cong \UI / {\Utwo \times \{1\}}
 \quad \mbox{and} \quad 
 \CH \cong \UI/{\Utwo \times S^1}.
\]
\subsection{Horizontal lift and fundamental theorem}
 We now consider a Lagrangian immersion $f^{\SCH}$ 
 from a Riemann surface $M$ into $\CH$. 
 Then it is known that on an open and contractible subset $\D$
 of $M$, there exists a special lift into $H_1^5$, that is, 
 $\f^{\SCH}: \D \rightarrow  H_1^5$ such that 
 $f^{\SCH}|_{\D} = \pi \circ \f^{\SCH}$ holds.
 Without loss of generality the lift $\f^{\SCH}$ satisfies
\begin{equation} \label{horizontal}
\langle \d\f^{\SCH},\f^{\SCH} \rangle = 0,
\end{equation}
 and it is called a \textit{horizontal lift}. Moreover, any two 
 such horizontal lifts
 only differ by a constant multiplicative factor from $S^1$.

 From equation \eqref{horizontal} we obtain  
 $\langle \f^{\SCH}_z,\f^{\SCH} \rangle = 0 = \langle 
 \f^{\SCH}_{\bar{z}},\f^{\SCH} \rangle$ 
 and after differentiation for $\bar{z}$ and $z$ 
 respectively we derive
 $\langle \f^{\SCH}_z,\f^{\SCH}_z  \rangle = \langle 
 \f^{\SCH}_{\bar{z}},\f^{\SCH}_{\bar{z}}  \rangle = e^{\omega^{\SCHt}}$ for some real function $\omega^{\SCH}: \D
 \to \R$.
  It is also easy to see from 
 $\langle \f^{\SCH}_x, \f^{\SCH} \rangle = \langle \f^{\SCH}_y, \f^{\SCH} \rangle = 0$, 
 and the derivative with respect to 
 $y$ of the first term and $x$ of the second term, respectively, that 
 \[
  \Omega (\f^{\SCH}_x, \f^{\SCH}_y)=0,
 \]
 that is,  $\f^{\SCH}$ is a Legendre immersion.
 Moreover, since $f^{\SCH}$ is conformal, 
 we also have $\langle \f^{\SCH}_z,\f^{\SCH}_{\bar{z}}  \rangle =0.$
 Therefore the metric of $f^{\SCH}$ is given by
 \[
 \d s^2 = \Re \langle \d \f^{\SCH}, \d \f^{\SCH}\rangle 
 =  2 e^{\omega^{\SCHt}} \d z \d \bar z.
 \]
 As a consequence, the vectors $ e^{-\omega^{\SCHt}/2} \f_z$, 
 $e^{-\omega^{\SCHt}/2} \f_{\bar{z}}$ and $\f$ form an 
 ``orthonormal basis'' relative to our Hermitian inner product
 $\langle \>, \>\rangle$. 
 Let us consider the coordinate frame
\begin{equation} \label{eq:coordframeCH}
\mathcal{F}_{\SCH} = ( e^{-\frac{1}{2}\omega^{\SCHt}} 
 \f^{\SCH}_z, e^{-\frac{1}{2}\omega^{\SCHt}} \f^{\SCH}_{\bar{z}},\f^{\SCH}).
\end{equation}

For what follows it will be convenient to lift the mean curvature vector of $f^{\SCH}$ from
$T_{f^{\SCHt}(z)} \CH$ to $T_{\f^{\SCHt}(z)}  H^5_1$.
It is easy to verify that  the vectors $ \f^{\SCH}_z, \f^{\SCH}_{\bar z},
  i \f^{\SCH}_z, i\f^{\SCH}_{\bar z}, i\f^{\SCH}$ span 
 $(T_{\f^{\SCH}(z)}  H^5_1)^{\C}$
and project under $\d \pi$ to $f^{\SCH}_z,f^{\SCH}_{\bar z},
 if^{\SCH}_z, if^{\SCH}_{\bar z}, 0$ respectively.
In this sense we identify  the mean curvature vector 
$H = H_1 ie^{-\frac{1}{2}\omega^{\SCHt}} f^{\SCH}_z +  H_2 ie^{-\frac{1}{2}\omega^{\SCHt}} f^{\SCH}_{\bar z}$ of $f^{\SCH}$ with the vector
 $H = H_1 ie^{-\frac{1}{2}\omega^{\SCHt}} \f^{\SCH}_{z}  +  
 H_2 ie^{-\frac{1}{2}\omega^{\SCHt}} \f^{\SCH}_{\bar z}$.
 It is clear now that we have the following, see \cite{ML}$:$
\begin{Lemma}
The coordinate frame $\mathcal{F}_{\SCH}$  of a Lagrangian immersion 
 into $\CH$ is a smooth map $\mathcal{F}_{\SCH}: \D \rightarrow \UI$.
 In particular, $\det \mathcal{F}_{\SCH}$ is a smooth map  
 from $ \D $ to $S^1$.
 For the Maurer-Cartan form 
\begin{equation}\label{eq:alphaH}
\alpha_{\SCH} = 
 \mathcal{F}_{\SCH}^{-1} \d \mathcal{F}_{\SCH} = 
 \mathcal{U}_{\SCH} \d z + \mathcal{V}_{\SCH} \d \bar{z},
\end{equation} 
 one then obtains, 
\begin{align}
 \mathcal U_{\SCH} = \begin{pmatrix} 
 \frac{1}{2}\omega^{\SCH}_z + \ell  & m & e^{\frac{1}{2}\omega^{\SCHt}}\\
  -Q^{\SCH}e^{-\omega^{\SCH}}&- \frac{1}{2}{\omega}^{\SCH}_z + \ell 
 & 0\\ 0 & e^{\frac{1}{2}\omega^{\SCH}}& 0
 \end{pmatrix}, \;\;
 \mathcal V_{\SCH} =
 \begin{pmatrix} 
 - \frac{1}{2} \omega^{\SCH}_{\bar{z}}  + m & \overline{Q^{\SCH}} 
 e^{-\omega^{\SCHt}}  & 0 \\
 \ell  & \frac{1}{2}\omega^{\SCH}_{\bar{z}} +m & 
 e^{\frac{1}{2}\omega^{\SCH}}\\
 e^{\frac{1}{2}\omega^{\SCH}}& 0 & 0
\end{pmatrix}, 
 \label{eq:UVH2}
\end{align}
 where  $\ell =  \langle H, \f^{\SCH}_{\bar z} \rangle$,
 $m =  \langle H, \f^{\SCH}_z \rangle$ and
 $H$ denotes the mean curvature vector. Moreover we have
\begin{equation}\label{eq:QCH}
Q^{\SCH} = \langle \f^{\SCH}_{zzz}, \f^{\SCH} \rangle.
\end{equation} 
 Here we have used $ \langle H, \f^{\SCH}_{\bar{z}} \rangle = 
- \langle \f^{\SCH}_z , H \rangle$ and $\langle H, \f^{\SCH}_{z} 
 \rangle = 
- \langle \f^{\SCH}_{\bar{z}},H \rangle$. Moreover $m = - \bar \ell$ holds.
\end{Lemma}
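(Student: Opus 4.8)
The argument is the exact analogue of the corresponding Lemma for $\CP$, the only differences arising from replacing the positive definite Hermitian product by the form $\langle\,,\,\rangle$ of signature $(2,1)$ and $\U$ by $\UI$. Denote the three columns of the coordinate frame \eqref{eq:coordframeCH} by $X_1=e^{-\frac{1}{2}\omega^{\SCHt}}\f^{\SCH}_z$, $X_2=e^{-\frac{1}{2}\omega^{\SCHt}}\f^{\SCH}_{\bar z}$, $X_3=\f^{\SCH}$, and set $(\varepsilon_1,\varepsilon_2,\varepsilon_3)=(1,1,-1)$. First I would check that $\mathcal{F}_{\SCH}$ takes values in $\UI$ by computing the Gram matrix of the $X_i$: horizontality \eqref{horizontal} gives $\langle\f^{\SCH}_z,\f^{\SCH}\rangle=\langle\f^{\SCH}_{\bar z},\f^{\SCH}\rangle=0$, conformality gives $\langle\f^{\SCH}_z,\f^{\SCH}_{\bar z}\rangle=0$, the definition of $\omega^{\SCH}$ gives $\langle\f^{\SCH}_z,\f^{\SCH}_z\rangle=\langle\f^{\SCH}_{\bar z},\f^{\SCH}_{\bar z}\rangle=e^{\omega^{\SCHt}}$, and $\f^{\SCH}\in H^5_1$ gives $\langle\f^{\SCH},\f^{\SCH}\rangle=-1$. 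Hence $\langle X_i,X_j\rangle=\varepsilon_i\delta_{ij}$, that is $\overline{\mathcal{F}_{\SCH}}^{\,T}I_{2,1}\mathcal{F}_{\SCH}=I_{2,1}$, so $\mathcal{F}_{\SCH}\colon\D\to\UI$; smoothness is clear from that of $\f^{\SCH}$ and $\omega^{\SCH}$. Taking determinants in this identity gives $|\det\mathcal{F}_{\SCH}|^2=1$, so $\det\mathcal{F}_{\SCH}$ maps into $S^1$.

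Next I would compute $\alpha_{\SCH}=\mathcal{F}_{\SCH}^{-1}\d\mathcal{F}_{\SCH}$. Since $\{X_i\}$ is pseudo-orthonormal, every vector $v$ expands as $v=\sum_i\varepsilon_i\langle v,X_i\rangle X_i$; applying this to $v=\partial_z X_j$ shows $(\mathcal{U}_{\SCH})_{ij}=\varepsilon_i\langle\partial_z X_j,X_i\rangle$, and likewise $(\mathcal{V}_{\SCH})_{ij}=\varepsilon_i\langle\partial_{\bar z}X_j,X_i\rangle$. Each entry is then evaluated by differentiating the relations above. For instance, differentiating $\langle\f^{\SCH}_z,\f^{\SCH}\rangle=0$ in $\bar z$ gives $\langle\f^{\SCH}_{z\bar z},\f^{\SCH}\rangle=-e^{\omega^{\SCHt}}$, whence $(\mathcal{U}_{\SCH})_{32}=\varepsilon_3 e^{-\frac{1}{2}\omega^{\SCHt}}\langle\f^{\SCH}_{z\bar z},\f^{\SCH}\rangle=e^{\frac{1}{2}\omega^{\SCHt}}$; it is exactly the factor $\varepsilon_3=-1$ in the third-row entries that accounts for the sign differences relative to the $\CP$ case. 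Similarly, differentiating $\langle\f^{\SCH}_{zz},\f^{\SCH}\rangle=0$ (itself coming from $\langle\f^{\SCH}_z,\f^{\SCH}\rangle=0$) in $z$ yields $\langle\f^{\SCH}_{zz},\f^{\SCH}_{\bar z}\rangle=-Q^{\SCH}$, producing the $(2,1)$-entry $-Q^{\SCH}e^{-\omega^{\SCHt}}$, while the diagonal and $(1,2)$-entries generate $\pm\tfrac12\omega^{\SCH}_z$ together with the projections of $\f^{\SCH}_{z\bar z}$ onto $X_1,X_2$, which by the definition of the mean curvature vector are the quantities $\ell=\langle H,\f^{\SCH}_{\bar z}\rangle$ and $m=\langle H,\f^{\SCH}_z\rangle$. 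Rather than recomputing $\mathcal{V}_{\SCH}$ entry by entry, I would obtain it from the Lie-algebra constraint $\alpha_{\SCH}\in\mathfrak{u}_{2,1}$, namely $\mathcal{V}_{\SCH}=-I_{2,1}\overline{\mathcal{U}_{\SCH}}^{\,T}I_{2,1}$.

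Finally, $m=-\bar\ell$ is precisely the consistency needed for the diagonal of $\mathcal{V}_{\SCH}=-I_{2,1}\overline{\mathcal{U}_{\SCH}}^{\,T}I_{2,1}$ to match \eqref{eq:UVH2}, and it follows directly from the Hermitian symmetry $\langle u,v\rangle=\overline{\langle v,u\rangle}$ and the identity $\langle H,\f^{\SCH}_{\bar z}\rangle=-\langle\f^{\SCH}_z,H\rangle$: indeed $\ell=-\langle\f^{\SCH}_z,H\rangle=-\overline{\langle H,\f^{\SCH}_z\rangle}=-\bar m$. The one genuinely delicate point is bookkeeping rather than conceptual: tracking the signs introduced by $\varepsilon_3=-1$ throughout, and correctly identifying the $X_1,X_2$-projections of the second derivative $\f^{\SCH}_{z\bar z}$ with $\ell$ and $m$, so that all entries assemble into exactly the matrices \eqref{eq:UVH2}.
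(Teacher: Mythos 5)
Your proposal is correct and follows exactly the route the paper intends: the paper states this Lemma without proof (deferring to \cite{ML}), and the preceding setup — pseudo-orthonormality of the columns with respect to the signature-$(2,1)$ form, the relations from horizontality and conformality, and the definitions of $\omega^{\SCH}$, $Q^{\SCH}$, $\ell$, $m$ — is precisely the computation you carry out. Your entry-by-entry checks (e.g.\ $\langle\f^{\SCH}_{z\bar z},\f^{\SCH}\rangle=-e^{\omega^{\SCHt}}$ and $\langle\f^{\SCH}_{zz},\f^{\SCH}_{\bar z}\rangle=-Q^{\SCH}$), the use of $\mathcal V_{\SCH}=-I_{2,1}\overline{\mathcal U_{\SCH}}^{\,T}I_{2,1}$, and the derivation of $m=-\bar\ell$ all verify correctly against the stated matrices.
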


\begin{Corollary}
 \label{MCH2}
 For $\alpha_{\SCH}$ in \eqref{eq:alphaH}, 
 the following statements hold see for example \cite{ML}$:$
\begin{enumerate}
 \item The {\rm mean curvature 1-form} $\sigma_H^{\SCH} 
 =  \Omega(H, \d \f^{\SCH}) = \ell \d z + m \d \bar z$
  satisfies $i \sigma_H^{\SCH} = \langle H, \d\f^{\SCH} \rangle 
 = \frac{1}{2} {\rm trace} ( \alpha_{\SCH})$.

\item The $1$-form $\alpha_{\SCH}$ satisfies the Maurer-Cartan equations 
 if and only if
 \begin{align*}
& \omega^{\SCH}_{z \bar{z}} - \left(1- \frac{1}{2}|H|^2\right) 
 e^{\omega^{\SCHt}} - 
 |Q^{\SCH}|^2 e^{-2 \omega^{\SCHt}} =0, \\
& \d \sigma^{\SCH}_H =0, \quad 
 Q^{\SCH}_{\bar{z}} e^{-2\omega^{\SCHt}} = 
 -  (\ell e^{-\omega^{\SCH}})_z.
\end{align*}
\end{enumerate}
\end{Corollary}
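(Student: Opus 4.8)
The plan is to reproduce, with the sign changes forced by the indefinite Hermitian form of signature $(1,2)$, the argument already used for the Corollary attached to $\alpha_{\SCP}$. Everything reduces to the splitting $\langle\>,\>\rangle = g + i\Omega$ together with a direct reading of the explicit matrices in \eqref{eq:UVH2}, so I would keep the two parts separate.

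For part (1), I would first observe that the lifted mean curvature vector $H = H_1\, i e^{-\frac{1}{2}\omega^{\SCHt}}\f^{\SCH}_z + H_2\, i e^{-\frac{1}{2}\omega^{\SCHt}}\f^{\SCH}_{\bar z}$ is $i$ times a combination of the tangential frame vectors $\f^{\SCH}_z,\f^{\SCH}_{\bar z}$. Since $\{e^{-\frac{1}{2}\omega^{\SCHt}}\f^{\SCH}_z, e^{-\frac{1}{2}\omega^{\SCHt}}\f^{\SCH}_{\bar z}, \f^{\SCH}\}$ is a pseudo-unitary basis and $\langle\f^{\SCH}_z,\f^{\SCH}_{\bar z}\rangle=0$, a short computation gives $g(H,\d\f^{\SCH}) = \Re\langle H,\d\f^{\SCH}\rangle = 0$, so that $\langle H,\d\f^{\SCH}\rangle = i\,\Omega(H,\d\f^{\SCH}) = i\sigma_H^{\SCH}$. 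The remaining equality with $\tfrac12\operatorname{trace}(\alpha_{\SCH})$ is then read straight off \eqref{eq:UVH2}: the diagonal entries of $\mathcal U_{\SCH}$ sum to $2\ell$ and those of $\mathcal V_{\SCH}$ to $2m$, whence $\tfrac12\operatorname{trace}(\alpha_{\SCH}) = \ell\,\d z + m\,\d\bar z$, which agrees with $\langle H,\d\f^{\SCH}\rangle$ after inserting $\ell=\langle H,\f^{\SCH}_{\bar z}\rangle$, $m=\langle H,\f^{\SCH}_z\rangle$ from the preceding Lemma.

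For part (2), I would write the flatness condition $\d\alpha_{\SCH}+\alpha_{\SCH}\wedge\alpha_{\SCH}=0$ for $\alpha_{\SCH}=\mathcal U_{\SCH}\,\d z+\mathcal V_{\SCH}\,\d\bar z$ in the equivalent coordinate form $(\mathcal V_{\SCH})_z-(\mathcal U_{\SCH})_{\bar z}+[\mathcal U_{\SCH},\mathcal V_{\SCH}]=0$ and extract it entrywise. Taking the trace annihilates the commutator and, by part (1), yields $m_z=\ell_{\bar z}$, i.e. $\d\sigma_H^{\SCH}=0$. The $(1,1)$ (or $(2,2)$) entry produces a scalar equation in $\omega^{\SCH}$: using $m_z=\ell_{\bar z}$ the first-derivative terms cancel, the product $\mathcal U_{13}\mathcal V_{31}=e^{\omega^{\SCHt}}$ from the bottom row contributes the $-e^{\omega^{\SCHt}}$ term, the product $\mathcal V_{12}\mathcal U_{21}$ contributes $-|Q^{\SCH}|^2e^{-2\omega^{\SCHt}}$, and the cross term $m\ell$ becomes $-\tfrac12|H|^2e^{\omega^{\SCHt}}$ once $m=-\bar\ell$ (from the Lemma) is substituted. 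Assembling these gives $\omega^{\SCH}_{z\bar z}-(1-\tfrac12|H|^2)e^{\omega^{\SCHt}}-|Q^{\SCH}|^2e^{-2\omega^{\SCHt}}=0$. Finally the off-diagonal $(1,2)$ entry, after using the definition \eqref{eq:QCH} of $Q^{\SCH}$ and clearing the exponential prefactors, yields $Q^{\SCH}_{\bar z}e^{-2\omega^{\SCHt}}=-(\ell e^{-\omega^{\SCHt}})_z$.

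The only delicate point is the entrywise evaluation of the $3\times3$ commutator $[\mathcal U_{\SCH},\mathcal V_{\SCH}]$ together with the derivative terms, and in particular the correct propagation of the signs coming from the $(1,2)$-signature: the sign flips of the $e^{\frac12\omega^{\SCHt}}$ entries in the bottom row of \eqref{eq:UVH2} (relative to $\mathcal U_{\SCP},\mathcal V_{\SCP}$) are exactly what turns the $+e^{\omega^{\SCPt}}$ of \eqref{eq:FundaeqCP} into the $-e^{\omega^{\SCHt}}$ above, while the identity $m=-\bar\ell$ is what converts the $m\ell$ cross term into $-\tfrac12|H|^2e^{\omega^{\SCHt}}$. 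Since the algebraic structure is otherwise identical to the $\CP$ computation, I would carry that calculation over verbatim and merely track these signs; this bookkeeping is where I expect the only real risk of error.
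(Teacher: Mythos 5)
Your proposal is correct and is exactly the intended argument: the paper itself gives no proof of this Corollary (it defers to \cite{ML}), and the statement is meant to follow from the trace identity together with an entrywise evaluation of the zero-curvature condition $(\mathcal V_{\SCH})_z-(\mathcal U_{\SCH})_{\bar z}+[\mathcal U_{\SCH},\mathcal V_{\SCH}]=0$, which is precisely what you carry out, with the sign bookkeeping (bottom-row entries $+e^{\frac12\omega^{\SCHt}}$ versus $-e^{\frac12\omega^{\SCHt}}$ in the $\CP$ case, and $m\ell=-|\ell|^2$) handled correctly. The only quibble is cosmetic: the Codazzi-type equation for $Q^{\SCH}$ comes from the $(2,1)$ entry rather than the $(1,2)$ entry, which instead yields the conjugate relation.
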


 From this one obtains the following theorem.
 \begin{Theorem}[Fundamental theorem for Lagrangian immersions into
 $\CH$]\label{thm:fundCH}
 Assume $f^{\SCH}:\D \rightarrow  \CH$ is a conformal 
 Lagrangian immersion and let $\f^{\SCH}$ denote one of 
 its horizontal lifts and $\mathcal{F}_{\SCH}$ 
 the corresponding coordinate frame \eqref{eq:coordframeCH}.
 Then $\alpha_{\SCH} = (\mathcal{F}_{\SCH})^{-1} \d \mathcal{F}_{\SCH} 
 = \mathcal{U}_{\SCH} \d z + \mathcal{V}_{\SCH} \d \bar{z}$ 
 with $\mathcal{U}_{\SCH}$ and $\mathcal{V}_{\SCH}$ 
 having the form \eqref{eq:UVH2} 
 and their coefficients satisfying the equations stated 
 in Corollary {\rm \ref{MCH2}}.
 
 Conversely, given  functions $\omega^{\SCH}, H$ on $\D$ together with 
 a cubic differential $Q^{\SCH}\d z^3$ and a $1$-form $\sigma^{\SCH}_H = 
 \ell \d z  + m \d \bar{z}$ on $\D$ such that the conditions of 
 Corollary  {\rm \ref{MCH2}}
 are satisfied $($with $\langle H, \f^{\SCP}_{\bar{z}} \rangle$  replaced by $m$  and 
 $\langle H, \f^{\SCP}_z \rangle$ 
 replaced by $\ell$$)$, 
 then there exists a solution $\mathcal{F}_{\SCH} \in \UI$ such that 
 $\f^{\SCH} = \mathcal{F}_{\SCH} e_3$ is a horizontal lift of the conformal 
 Lagrangian immersion $f^{\SCH} = \pi \circ \f^{\SCH}$.
\end{Theorem}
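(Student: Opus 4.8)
The forward direction is already contained in the preceding Lemma and Corollary \ref{MCH2}: starting from a conformal Lagrangian immersion $f^{\SCH}$, its horizontal lift $\f^{\SCH}$ and coordinate frame $\mathcal{F}_{\SCH}$ produce a Maurer-Cartan form $\alpha_{\SCH} = \mathcal{F}_{\SCH}^{-1}\d\mathcal{F}_{\SCH}$ of the shape \eqref{eq:UVH2} whose coefficients obey the equations of Corollary \ref{MCH2}. So the plan is to concentrate on the converse, which has the structure of a classical fundamental theorem of surface theory and runs parallel to the projective case of Theorem \ref{thm:fundCP}. From the prescribed data $\omega^{\SCH}, H, Q^{\SCH}$ and $\sigma^{\SCH}_H = \ell\,\d z + m\,\d\bar z$ (with $m = -\bar\ell$) I would first build the matrices $\mathcal{U}_{\SCH}, \mathcal{V}_{\SCH}$ by the formulas \eqref{eq:UVH2} and set $\alpha_{\SCH} = \mathcal{U}_{\SCH}\,\d z + \mathcal{V}_{\SCH}\,\d\bar z$; the task is then to integrate $\alpha_{\SCH}$ to a frame and to check that the resulting map carries the claimed geometry.

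Two algebraic verifications come next. First, I would confirm that $\alpha_{\SCH}$ is $\Lie(\UI)$-valued, that is $\overline{\mathcal{U}_{\SCH}}^{\,T} I_{2,1} + I_{2,1}\mathcal{V}_{\SCH} = 0$ (equivalently, its conjugate transpose); here the reality of $\omega^{\SCH}$ and $H$, the relation $m = -\bar\ell$, and the sign pattern dictated by $I_{2,1} = \di(1,1,-1)$ are exactly what make the entries fit into $\mathfrak{u}_{2,1}$, and this is where the indefinite signature enters and signs must be tracked with care. Second, I would show that the flatness condition $\d\alpha_{\SCH} + \alpha_{\SCH}\wedge\alpha_{\SCH} = 0$ is equivalent to the three equations of Corollary \ref{MCH2}, by comparing the $\d z\wedge\d\bar z$-component entrywise: the equation for $\omega^{\SCH}$ appears in one entry, while the remaining entries reproduce $\d\sigma^{\SCH}_H = 0$ and the equation relating $Q^{\SCH}_{\bar z}$ and $(\ell e^{-\omega^{\SCHt}})_z$. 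Since $\D$ is open and simply connected, flatness then yields, by integration of the Maurer-Cartan equation, a smooth map $\mathcal{F}_{\SCH}: \D\to\UI$ with $\mathcal{F}_{\SCH}^{-1}\d\mathcal{F}_{\SCH} = \alpha_{\SCH}$, unique up to left multiplication by a constant element of $\UI$, which I would normalize at a base point.

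Finally I would recover the geometry from $\mathcal{F}_{\SCH}$. Writing the columns as $(\phi_1, \phi_2, \phi_3)$ and setting $\f^{\SCH} := \mathcal{F}_{\SCH} e_3 = \phi_3$, the structure equations $(\mathcal{F}_{\SCH})_z = \mathcal{F}_{\SCH}\mathcal{U}_{\SCH}$ and $(\mathcal{F}_{\SCH})_{\bar z} = \mathcal{F}_{\SCH}\mathcal{V}_{\SCH}$ applied to the third columns of \eqref{eq:UVH2} give $\f^{\SCH}_z = e^{\frac{1}{2}\omega^{\SCHt}}\phi_1$ and $\f^{\SCH}_{\bar z} = e^{\frac{1}{2}\omega^{\SCHt}}\phi_2$, so $\mathcal{F}_{\SCH}$ is indeed the coordinate frame \eqref{eq:coordframeCH} of $\f^{\SCH}$. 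Because $\mathcal{F}_{\SCH}\in\UI$, its columns are $I_{2,1}$-orthonormal, and reading off the inner products yields at once $\langle\f^{\SCH},\f^{\SCH}\rangle = -1$ (so $\f^{\SCH}\in H_1^5$), the horizontality \eqref{horizontal} from $\langle\phi_1,\phi_3\rangle = \langle\phi_2,\phi_3\rangle = 0$, conformality from $\langle\f^{\SCH}_z,\f^{\SCH}_{\bar z}\rangle = 0$, and $\langle\f^{\SCH}_z,\f^{\SCH}_z\rangle = e^{\omega^{\SCHt}} > 0$, which forces $f^{\SCH} = \pi\circ\f^{\SCH}$ to be an immersion; the Legendre condition $\Omega(\f^{\SCH}_x,\f^{\SCH}_y) = 0$, hence the Lagrangian property, follows from $\langle\f^{\SCH}_x,\f^{\SCH}_y\rangle$ being real once expressed through $\f^{\SCH}_z,\f^{\SCH}_{\bar z}$. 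Since $\alpha_{\SCH}$ was built from \eqref{eq:UVH2}, comparing it with the Maurer-Cartan form of the recovered $\f^{\SCH}$ shows that the prescribed $\omega^{\SCH}, Q^{\SCH}, \ell, m$ coincide with the invariants of $f^{\SCH}$ (in particular $Q^{\SCH} = \langle\f^{\SCH}_{zzz},\f^{\SCH}\rangle$ as in \eqref{eq:QCH}). The main obstacle is not any single step but the careful sign bookkeeping throughout: the indefinite form $I_{2,1}$ changes several signs relative to the projective case of Theorem \ref{thm:fundCP}, and one must make sure these propagate consistently through the $\Lie(\UI)$-membership, the flatness-versus-Corollary \ref{MCH2} equivalence, and the final identification of the geometric invariants.
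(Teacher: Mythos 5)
Your proposal is correct and follows exactly the route the paper intends (and merely sketches, deferring details to \cite{ML}): the forward direction is the content of the preceding Lemma and Corollary \ref{MCH2}, and the converse is the standard integration of the $\mathfrak{u}_{2,1}$-valued flat Maurer--Cartan form on the simply connected domain $\D$, followed by reading off horizontality, conformality and the Legendre/Lagrangian condition from the $I_{2,1}$-orthonormality of the columns of $\mathcal{F}_{\SCH}$. Your sign bookkeeping (in particular $\overline{\mathcal{V}_{\SCH}}^{\,T}I_{2,1}+I_{2,1}\mathcal{U}_{\SCH}=0$ via $m=-\bar{\ell}$) checks out against \eqref{eq:UVH2}, so no gap remains.
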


\subsection{Minimal Lagrangian surfaces in $\CH$}
If we restrict to minimal Lagrangian surfaces, then 
$\ell$ and $m$ vanish identically. Moreover, the equations (\ref{eq:UVH2}) show that the determinant of the coordinate frame is a constant (in $S^1$).
So we can, and will, assume from here on that the horizontal lift of the given minimal immersion into $\CH$ is scaled  (by a constant in $S^1$) such that the corresponding coordinate frame $\mathcal{F}_{\SCH}$ is in $\SUI$.
 It follows that the matrices in \eqref{eq:UVH2}  now are of the form 
\begin{align}
 \mathcal U_{\SCH} = \begin{pmatrix} 
 \frac{1}{2}\omega^{\SCH}_z  & 0 & e^{\frac{1}{2}\omega^{\SCHt}}\\
  - Q^{\SCH}e^{-\omega^{\SCH}} &- \frac{1}{2}{\omega}^{\SCH}_z 
 & 0\\[0.1cm] 0 & e^{\frac{1}{2}\omega^{\SCH}}& 0
 \end{pmatrix}, \;\;
 \mathcal V_{\SCH} =
 \begin{pmatrix} 
 - \frac{1}{2} \omega^{\SCH}_{\bar{z}}  & \overline{Q^{\SCH}} 
 e^{-\omega^{\SCHt}}  & 0 \\
 0 & \frac{1}{2}\omega^{\SCH}_{\bar{z}}& 
 e^{\frac{1}{2}\omega^{\SCH}}\\
 e^{\frac{1}{2}\omega^{\SCH}}& 0 & 0
\end{pmatrix}, 
 \label{eq:UVH2min}
\end{align}
and the integrability conditions are 
\begin{equation} \label{integrmin}
 \omega^{\SCH}_{z \bar{z}} -  e^{\omega^{\SCHt}} + 
 |Q^{\SCH}|^2 e^{-2 \omega^{\SCHt}} =0, \hspace{6mm}
 Q^{\SCH}_{\bar{z}} = 0.
 \end{equation}
 Note, the first of these two equations is 
 one of the  Tzitz\'eica equations
 (which differ from each other by some sign(s)).
 From the definition of $Q^{\SCH}$ in \eqref{eq:QCH}, it is clear that 
 \[
  C^{\SCH}(z) = Q^{\SCH}(z) \, \d z^3 
 \]
 is the holomorphic cubic differential of the  
 minimal Lagrangian surface $f^{\SCH}$.

\begin{Remark}
 The fundamental theorem in Theorem \ref{thm:fundCH} is 
 still true for a minimal Lagrangian immersions into $\CH$.
 \end{Remark}

\subsection{Associated families and flat connections}
 From \eqref{integrmin}, it is easy to see that there 
 exists a one-parameter 
 family of solutions of \eqref{integrmin}
 parametrized by $\l \in S^1$.
 The corresponding family $\{\omega_{\SCH}^\l, 
 C_{\SCH}^{\l} \}_{\l \in S^1}$
 then satisfies
\[
  \omega_{\SCH}^{\l} = \omega^{\SCH}, \quad C_{\SCH}^{\l} = \l^{-3} Q^{\SCH} \d z^3.
\]
 As a consequence, there exists a one-parameter family of 
 minimal Lagrangian surfaces 
 $\{\hat f_{\SCH}^{\l}\}_{\l \in S^1}$  in $\CH$ such that $\hat f_{\SCH}^{\l}|_{\l=1} = f^{\SCH}$.
 The family $\{\hat f_{\SCH}^{\l}\}_{\l \in S^1}$ will be called the \textit{associated 
 family} of $f^{\SCH}$.
 Let $\hat {\mathcal F}^{\l}_{\SCH}$ be the coordinate frame of 
 $\hat f_{\SCH}^{\l}$.
 Then the Maurer-Cartan form $\hat \alpha_{\SCH}^{\l} = 
 \hat {\mathcal U}_{\SCH}^{\l} \d z + 
 \hat {\mathcal V}_{\SCH}^{\l} \d \bar z$ of $\hat {\mathcal F}_{\SCH}^{\l}$
 for the associated family 
 $\{\hat f_{\SCH}^{\l}\}_{\lambda \in S^1}$
 is given by ${\mathcal U}_{\SCH}$ and ${\mathcal V}_{\SCH}$ 
 as in 
 \eqref{eq:UVH2min} where we have replaced $Q^{\SCH}$ and $\overline{Q^{\SCH}}$ by $\lambda^{-3} Q^{\SCH}$ 
 and $\lambda^3 \overline{Q^{\SCH}}$, respectively.
 Then consider the gauge transformation $G^{\l}$ given by 
 \begin{equation}\label{eq:extCH}
 F_{\SCH}^{\l}  =\hat {\mathcal F}^{\l}_{\SCH} G^{\l}, \quad 
 G^{\l} = \di ( \lambda, \lambda^{-1}, 1).
 \end{equation}
 This implies
\begin{equation} \label{eq:alphaLH2}
\alpha_{\SCH}^{\l} =  (F_{\SCH}^{\l})^{-1}  \d F_{\SCH}^{\l} 
 = U_{\SCH}^{\l} \d z + V_{\SCH}^{\l} \d \bar z	   
\end{equation} \label{alphaLH2}
 with $U_{\SCH}^{\l} = (G^{\l})^{-1}\hat{\mathcal U}^{\l}_{\SCH} G^{\l}$ 
 and $V_{\SCH}^{\l} = (G^{\l})^{-1}\hat{\mathcal V}^{\l}_{\SCH} G^{\l}$. 
 It is easy to see that $\hat{\mathcal F}_{\SCH}^{\l} G^{\l} e_3
 = \hat{\mathcal F}_{\SCH}^{\l} e_3$. 
 Therefore 
 $f_{\SCH}^{\l} : = \pi \circ (\hat{\mathcal F}_{\SCH}^{\l} G^{\l}e_3) =  
 \pi \circ (\hat{\mathcal F}_{\SCH}^{\l} e_3)  = \hat{f}_{\SCH}^\l$. 
 Hence we will not distinguish between 
 $\{\hat f_{\SCH}^{\lambda}\}_{\lambda \in S^1}$ and 
 $\{f_{\SCH}^{\lambda}\}_{\lambda \in S^1}$, and both families will be 
 called the associated family of $f^{\SCH}$, and $F_{\SCH}^{\l}$ will also  be called the 
 coordinate frame of $f_{\SCH}^{\l}$.
 

 From the discussion just above we obtain that 
 the family of Maurer-Cartan forms $\alpha_{\SCH}^{\l}$  
 in \eqref{eq:alphaLH2}
 of a minimal Lagrangian surface $f^{\SCH}: M \to \CP$  can be 
 computed explicitly as 
  \begin{equation}\label{eq:alphalambda-origCH}
 \alpha_{\SCH}^{\l} = U_{\SCH}^{\lambda} \d z + V_{\SCH}^{\lambda} 
 \d\bar z,
 \end{equation} 
 for $\lambda \in \C^{\times}$, where 
 $U_{\SCH}^{\lambda}$ and $V_{\SCH}^{\lambda}$ are  given by  
\begin{align*}
U_{\SCH}^{\lambda} =
 \begin{pmatrix} 
\frac{1}{2}\omega^{\SCH}_z & 0 &
 \lambda^{-1} e^{\frac{1}{2}\omega^{\SCHt}}\\
- \lambda^{-1} Q^{\SCH} e^{-\omega^{\SCHt}} & 
 - \frac{1}{2}\omega^{\SCH}_z & 0\\
0 & \lambda^{-1}e^{\frac{1}{2}\omega^{\SCHt}}& 0
\end{pmatrix},   \quad 
V_{\SCH}^{\lambda} =
 \begin{pmatrix} 
- \frac{1}{2}\omega^{\SCH}_{\bar z}
 & \lambda \overline{ Q^{\SCH}} e^{-\omega^{\SCHt}}  & 0 \\
 0 & \frac{1}{2}\omega^{\SCH}_{\bar z} & \lambda e^{\frac{1}{2}\omega^{\SCHt}}\\
\lambda e^{\frac{1}{2}\omega^{\SCHt}}& 0 & 0
\end{pmatrix}. 
\end{align*}
 It is clear that $\alpha_{\SCH}^{\lambda}|_{\lambda=1}$ is 
 the Maurer-Cartan form of the coordinate frame $\mathcal F_{\SCH}$ 
 of $f^{\SCH}$. 
 Then by the discussion in the previous section, we have the following 
 theorem. 
\begin{Theorem}\label{thm:flatconnectionsCH}
 Let $f^{\SCH} : \D \to \CH$ be a minimal Lagrangian surface in $\CH$ 
 and  let $\alpha_{\SCH}^{\l}$ be the family of Maurer-Cartan 
 forms defined in  \eqref{eq:alphalambda-origCH}.
 Then $\d + \alpha_{\SCH}^{\lambda}$ gives a family of flat connections
 on $\D \times \SUI$.
 
 Conversely, given a family of connections  $\d 
 + \alpha_{\SCH}^{\lambda}$ 
 on $\D \times \SUI$, where $\alpha_{\SCH}^{\lambda}$ is as in \eqref{eq:alphalambda-origCH}, then  $\d + \alpha_{\SCH}^{\lambda}$ belongs 
 to an associated 
 famiy of minimal Lagrangian immersions into $\CH$ 
 if and only if the connection is flat for all 
 $\lambda \in S^1$.
\end{Theorem}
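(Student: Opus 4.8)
The plan is to mirror the proof of the corresponding statement for $\CP$, Theorem~\ref{thm:flatconnectionsCP}, the only essential changes being the replacement of $\SU$ by $\SUI$ and the bookkeeping forced by the indefinite inner product. Two things have to be checked: first, that for $\l \in S^1$ the form $\alpha_{\SCH}^{\l}$ of \eqref{eq:alphalambda-origCH} actually takes values in $\su$, so that $\d + \alpha_{\SCH}^{\l}$ is a genuine connection on $\D \times \SUI$; and second, that the flatness of this connection, as $\l$ ranges over $S^1$, is equivalent to the integrability system \eqref{integrmin}. For the first point I would verify directly that $U_{\SCH}^{\l}$ and $V_{\SCH}^{\l}$ are fixed by the real form involution $(\bullet_{\SCH})$, namely that $-\ad(I_{2,1})\,\overline{\alpha_{\SCH}^{1/\bar\l}}^{\,T} = \alpha_{\SCH}^{\l}$; for $\l \in S^1$ one has $1/\bar\l = \l$, and this is exactly the condition $\overline{(U_{\SCH}^{\l})}^{T} I_{2,1} + I_{2,1} U_{\SCH}^{\l} = 0$ (and likewise for $V_{\SCH}^{\l}$) together with tracelessness, which can be read off from the explicit matrices.

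For the flatness computation I would write the curvature of $\d + \alpha_{\SCH}^{\l}$ as $F^{\l} = \d\alpha_{\SCH}^{\l} + \alpha_{\SCH}^{\l}\wedge\alpha_{\SCH}^{\l}$, whose coefficient is $(V_{\SCH}^{\l})_z - (U_{\SCH}^{\l})_{\bar z} + [U_{\SCH}^{\l}, V_{\SCH}^{\l}]$. Decomposing $U_{\SCH}^{\l} = A_0 + \l^{-1} A_{-1}$ and $V_{\SCH}^{\l} = B_0 + \l B_1$, where $A_0, B_0$ are the diagonal parts carrying $\tfrac12\omega^{\SCH}_z$ and $\tfrac12\omega^{\SCH}_{\bar z}$ and $A_{-1}, B_1$ are the off-diagonal parts carrying $e^{\frac12\omega^{\SCHt}}$ and $Q^{\SCH}$, one sees that $F^{\l}$ is a Laurent polynomial in $\l$ supported in degrees $-1, 0, 1$. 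The coefficients of $\l^{\pm 1}$ are off-diagonal and vanish precisely when $Q^{\SCH}_{\bar z} = 0$ (together with its conjugate), while the coefficient of $\l^{0}$ is diagonal and its vanishing is exactly the Tzitz\'eica equation of \eqref{integrmin}. Hence $F^{\l}$ vanishes for all $\l$ if and only if \eqref{integrmin} holds. This settles the forward direction, since for a minimal Lagrangian surface the system \eqref{integrmin} is guaranteed by Corollary~\ref{MCH2} (with $\ell = m = 0$), and the reality check above places $\alpha_{\SCH}^{\l}$ in $\su$ along $S^1$.

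For the converse I would use that $F^{\l}$, being a Laurent polynomial in $\l$ of degrees $-1$ to $1$, vanishes identically once it vanishes on the infinite set $S^1$; thus flatness for all $\l \in S^1$ forces each of the three coefficient equations, hence the full system \eqref{integrmin}. Feeding the data $(\omega^{\SCH}, Q^{\SCH})$ into the fundamental theorem, Theorem~\ref{thm:fundCH} in its minimal form, produces a horizontal lift $\f^{\SCH}$ and hence a minimal Lagrangian immersion $f^{\SCH}$ whose associated family has $\alpha_{\SCH}^{\l}$ as its Maurer--Cartan form; this is precisely what it means for $\d + \alpha_{\SCH}^{\l}$ to belong to an associated family. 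The main obstacle I anticipate is not the $\l$-decomposition, which is routine, but the careful sign accounting in the indefinite setting: the minus sign in $I_{2,1}$ enters both the reality condition of the first step and the $e^{\omega^{\SCHt}}$-terms of the $\l^0$-coefficient, and it is exactly this sign that distinguishes the Tzitz\'eica equation \eqref{integrmin} for $\CH$ from its $\CP$ counterpart \eqref{eq:TzitzeicaCP}.
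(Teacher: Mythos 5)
Your proposal is correct and follows essentially the same route as the paper, which leaves the proof of Theorem \ref{thm:flatconnectionsCH} implicit by referring back to the construction of the associated family and to Theorem \ref{thm:fundCH}: your verification of the reality condition, the decomposition of the curvature into Laurent coefficients in degrees $-1,0,1$, the identification of their vanishing with the integrability system, and the observation that vanishing on $S^1$ forces each coefficient to vanish are precisely the standard way of making that reduction explicit. One caveat: carrying out your $\lambda^{0}$ computation from the matrices in \eqref{eq:alphalambda-origCH} actually yields $\omega^{\SCH}_{z\bar z}-e^{\omega^{\SCHt}}-|Q^{\SCH}|^{2}e^{-2\omega^{\SCHt}}=0$, i.e.\ the $H=0$ case of Corollary \ref{MCH2}, which differs by the sign of the $|Q^{\SCH}|^{2}$-term from \eqref{integrmin} as printed --- this is an internal sign inconsistency of the survey between Corollary \ref{MCH2} and \eqref{integrmin}, not a flaw in your argument, but you should not assert that the $\lambda^{0}$-coefficient reproduces \eqref{integrmin} \emph{verbatim} without resolving that sign.
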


\section{Timelike minimal Lagrangian surfaces in $\CHI$}\label{sc:mLiCHI}
 In this section, we discuss a loop group formulation of 
 timelike minimal Lagrangian surfaces in the complex projective plane $\CHI$.
 The detailed discussion can be found in \cite{DK:timelike}.
 Here we use that the subscripts $u$ and $v$ 
 denote the derivatives with respect to 
 $u$ and $v$, respectively, that is,
\[
 f_u = \partial_u f = \frac{\partial f}{\partial u}, \quad
 f_v = \partial_v f = \frac{\partial f}{\partial v}.
\]

\subsection{Basic definitions}
 Let
\begin{equation}\label{eq:P}
 \r{P} = 
\begin{pmatrix}
 0 & 1 & 0 \\ 
 1 & 0 & 0 \\ 
 0 & 0 & -1 
\end{pmatrix},
\end{equation}
 and consider the three-dimensional complex Hermitian flat space $\C^3_2$ 
 with signature $(2,1)$.
\begin{equation*}
 \langle z, w\rangle = z^T \r{P} \,  \!\bar w = 
 z_1 \overline{w_2} + z_2 \overline{w_1} -z_3 \overline{w_3}.
\end{equation*}
 Let $H^5_3$ be the indefinite sphere (note again that the signature
 of $\C^3_2$ is $(2,1)$)
\[
 H^5_{3} = \left\{ w \in \C^3_2 \;|\;  \langle w, w\rangle = -1\right\}.
 \]
 Then the two-dimensional indefinite complex hyperbolic space $\CHI$ 
 is
 \begin{equation}
 \CHI = \{ \mathbb C^{\times} w \;|\; w \in \C^3_2, \langle w,w \rangle < 0\}
 \end{equation}
 Then there exists the Boothby-Wang type fibration \cite{BW, DF} 
 $\pi : H^5_3 \to \CHI$ given by $w \mapsto 
 \C^{\times} w$.
 The tangent space of $H^5_3$ at $p \in H^5_3$ is
\begin{equation*}
 T_p H^5_3  = \{ w \in \C^3_2 \;|\;\Re \langle w,p \rangle =0\}.
\end{equation*}
 Moreover, the space 
 $\mathcal{H}_p = \{ w \in T_p H^5_3\;|\;  \langle w,p \rangle =0\}$
 is a natural horizontal subspace.
 The form $\langle \> ,\> \rangle$ is an indefinite Hermitian inner product on  
 $\mathcal{H}_u$  with real and imaginary components
\[
 \langle \>,\> \rangle = g(\>,\>) + i \Omega(\>,\>).
\]
 Hence $g$ is indefinite and $\Omega$ is a symplectic form.
 Put 
\[
 \UIT = \{ A : \C^3_2 \to \C^3_2 \;|\; \mbox{$\C$-linear, satisfying 
 $\langle A w, A q\rangle =\langle w, q\rangle $}\},
\]
 and $\SUIT = \{ A \in \UIT \;| \; \det A =1\}$. We note $\UIT = S^1 \cdot 
 \SUIT$ and that these are connected real reductive Lie groups with their 
 centers consisting of multiples of the identity transformation.
 Since, $\SUI$ and $\SUIT$ are isomorphic groups, 
 so they are both connected.
 The  groups $\UIT$ and $\SUIT$ act naturally on $H^5_3$ and $\CHI$.
 The group $\UIT$ acts transitively on both spaces. 
 Moreover, this action is 
 equivariant relative to $\pi$ and holomorphic on $\CHI$. 
 Using the base point $e_3 = (0,0,1)^T$ it is easy to verify
 \[
  H^5_3 = \UIT/\UtwoT \times \{1\}, \quad 
  \CHI = \UIT/\UtwoT \times S^1.
 \]
\subsection{Horizontal lift and fundamental theorem}
 We now consider a timelike Lagrangian immersion $f^{\ST}$ 
 from a surface $M$ into $\CHI$. 
 Then it is known that on an open and contractible subset $\D$
 of $M$, there exists a special lift into $H^5_3$, that is, 
 $\f^{\ST} : \D \to H^5_3, \pi \circ \f^{\ST} = f^{\ST}|_{\D}$, and 
\begin{equation}\label{eq:horizontalCHI}
 \langle \d\f^{\ST}, \f^{\ST}\rangle=0
\end{equation}
 holds, see \cite{DK:timelike}. The lift $\f^{\ST}$ will be called a \textit{horizontal lift} of $f^{\ST}$.
 The induced metric of $f^{\ST}$ is represented, by using the horizontal lift $\f^{\ST}$ 
 as
 \[
  \d s^2 =  \Re \langle \d \f^{\ST}, \d \f^{\ST} \rangle.
 \]
 Since the induced metric is Lorentzian, 
 we can take locally  null coordinates $(u, v)$  on $\D \subset M$.
 Then the horizontality condition \eqref{eq:horizontalCHI} implies
 $\langle \f^{\ST}_u, \f^{\ST}\rangle
 = \langle \f^{\ST}_v, \f^{\ST}\rangle=0$, and taking the derivative with respect to 
 $v$ of the first term and $u$ of the second term, respectively,
 we infer:
 \begin{equation}\label{eq:nullfT}
 \Omega(\f^{\ST}_{u}, \f^{\ST}_{v} ) = \Im \langle  
 \f^{\ST}_{u}, \f^{\ST}_{v}\rangle = 0,
\end{equation}
 that is, $\f^{\ST}$ is Legendrian.
 Moreover, since  we have chosen $u$ and $v$ as as null coordinates for $f^{\ST}$, we have
 \begin{equation}\label{eq:nullT}
 {\langle \f^{\ST}_u, \f^{\ST}_{u}\rangle} =  {\langle \f^{\ST}_v, \f^{\ST}_v \rangle} 
  =  0
 \quad \mbox{and} \quad \Re {\langle \f^{\ST}_u, \f^{\ST}_v \rangle} \neq 0.
 \end{equation} 
 One can assume without loss of generality that 
 $\Re {\langle \f^{\ST}_u, \f^{\ST}_v \rangle} > 0 $ holds.
 Therefore there exists a real function $\omega^{\ST}: \D \to \R$ such that 
\begin{equation}\label{eq:metricT}
 \langle \f^{\ST}_u, \f^{\ST}_v \rangle  = e^{\omega^{\STt}} \quad \mbox{and}\quad 
 \d s^2 = 2 e^{\omega^{\STt}} \d u  \d v.
\end{equation}
 We now consider the coordinate frame
 \begin{equation}\label{eq:coordinateframeCHI}
 \mathcal F_{\ST} = ( e^{-\frac{1}{2}\omega^{\STt}}\f^{\ST}_u, 
 e^{-\frac{1}{2}\omega^{\STt}} \f^{\ST}_v, \f^{\ST}).
\end{equation}
 It is straightforward to see that $\mathcal F_{\ST}$ takes values in 
 $\UIT$, that is, 
 \[
  \mathcal F_{\ST}^T \,P_0 \overline{\mathcal F_{\ST}} P_0  = 
 \id
 \] 
 holds.
 For what follows it will be convenient to lift 
 the mean curvature vector of $f^{\ST}$ from
 $T_{f^{\ST}(u, v)} $ to $T_{\f^{\ST}(u, v)}  H^5_3$.
 It is easy to verify that  the vectors $ \f^{\ST}_u, \f^{\ST}_v, 
 i \f^{\ST}_u, i\f^{\ST}_v, i\f^{\ST}$ span $T_{\f^{\STt}(u, v)}  H^5_3$
 and project under $\d \pi$ to $f^{\ST}_u,f^{\ST}_v,if^{\ST}_u, 
 if^{\ST}_v, 0$ respectively.
 In this sense we identify  the mean curvature vector 
 $H = H_1 ie^{-\frac{1}{2}\omega^{\STt}} f^{\ST}_u +  H_2 ie^{-\frac{1}{2}\omega^{\STt}} f^{\ST}_v$ of $f^{\ST}$ with the vector
 $H = H_1 ie^{-\frac{1}{2}\omega^{\STt}} \f^{\ST}_u  +  H_2 ie^{-\frac{1}{2}\omega^{\STt}} \f^{\ST}_v$.

\begin{Lemma}
 The coordinate frame $\mathcal{F}_{\ST}$  of a timelike 
 Lagrangian immersion 
 into $\CHI$ is a smooth map $\mathcal{F}_{\ST}: \D \rightarrow \UIT$.
 In particular, $\det \mathcal{F}_{\ST}$ is a smooth map  
 from $ \D $ to $S^1$.
 The Maurer-Cartan form  
\begin{equation}\label{eq:alphaT}
 \alpha_{\ST} = \mathcal F_{\ST}^{-1} \d \mathcal F_{\ST} = \mathcal F_{\ST}^{-1} 
 ({\mathcal F}_{\ST})_u\d u
 +  \mathcal F_{\ST}^{-1} ({\mathcal F_{\ST}})_v \d v =  \mathcal U_{\ST}  \d u  
+  \mathcal V_{\ST} \d v 
\end{equation}
 can be computed as 
\begin{align}
 \mathcal U_{\ST} = \begin{pmatrix} 
 \frac{1}{2}\omega^{\ST}_u + \ell  & m & e^{\frac{1}{2}\omega^{\STt}}\\
 -Q^{\ST}e^{-\omega^{\STt}} &-\frac{1}{2}\omega^{\ST}_u + \ell & 0\\
 0 &  e^{\frac{1}{2}\omega^{\STt}}& 0
 \end{pmatrix}, \quad 
 \mathcal V_{\ST} =
 \begin{pmatrix} 
 - \frac{1}{2}\omega^{\ST}_{v}+ m 
 & - R^{\ST}  e^{-\omega^{\STt}}  & 0 \\
 \ell & \frac{1}{2}\omega^{\ST}_{v}+ m & e^{\frac{1}{2}\omega^{\STt}}\\
 e^{\frac{1}{2}\omega^{\STt}}& 0 & 0
\end{pmatrix}, 
 \label{eq:UVT}
\end{align}
 where  $\ell =  \langle H, \f^{\ST}_u \rangle$,
 $m =  \langle H, \f^{\ST}_{v} \rangle$,
 $H$ denotes the mean curvature vector, and 
 $Q^{\ST}$ and $R^{\ST}$ are 
 {\rm purely imaginary functions} defined by
\begin{equation}\label{eq:QandRT}
 Q^{\ST} = \langle \f^{\ST}_{uuu}, \f^{\ST}\rangle\quad \mbox{and} \quad 
 R^{\ST} = \langle \f^{\ST}_{vvv}, \f^{\ST}\rangle.
\end{equation}
 Here we have used $ \langle H, \f^{\ST}_{v} \rangle = 
- \langle \f^{\ST}_v , H \rangle$ and $\langle H, \f^{\ST}_{u} 
 \rangle = 
- \langle \f^{\ST}_{u},H \rangle$.
 Moreover, $\ell$ and $m$ take values in $i \R$.
\end{Lemma}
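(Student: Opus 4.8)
The plan is to read off $\mathcal U_{\ST}$ and $\mathcal V_{\ST}$ one column at a time, each column being the expansion of the $u$- (resp.\ $v$-) derivative of a frame vector back in the frame $\mathcal F_{\ST}=(F_1,F_2,F_3)$ with $F_1=e^{-\omega^{\STt}/2}\f^{\ST}_u$, $F_2=e^{-\omega^{\STt}/2}\f^{\ST}_v$, $F_3=\f^{\ST}$. First I would record that the Gram matrix of $(F_1,F_2,F_3)$ with respect to $\langle\cdot,\cdot\rangle$ is exactly $\r{P}$: this is immediate from $\langle\f^{\ST},\f^{\ST}\rangle=-1$, the horizontality $\langle\f^{\ST}_u,\f^{\ST}\rangle=\langle\f^{\ST}_v,\f^{\ST}\rangle=0$, the null conditions $\langle\f^{\ST}_u,\f^{\ST}_u\rangle=\langle\f^{\ST}_v,\f^{\ST}_v\rangle=0$, and the normalization $\langle\f^{\ST}_u,\f^{\ST}_v\rangle=e^{\omega^{\STt}}$ of \eqref{eq:metricT}. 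Equivalently $\mathcal F_{\ST}^T\,\r{P}\,\overline{\mathcal F_{\ST}}=\r{P}$, which is the defining relation of $\UIT$; taking determinants and using $\det\r{P}\neq0$ gives $|\det\mathcal F_{\ST}|^2=1$, so $\det\mathcal F_{\ST}$ maps $\D$ into $S^1$. Since $\r{P}^{-1}=\r{P}$, the Gram identity furnishes the expansion $w=\langle w,F_2\rangle\,F_1+\langle w,F_1\rangle\,F_2-\langle w,F_3\rangle\,F_3$ valid for every $w\in\C^3_2$, and this is the tool I would use for each column.

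The purely constraint-theoretic entries come from differentiating the relations above. Differentiating $\langle\f^{\ST}_u,\f^{\ST}\rangle=0$ and $\langle\f^{\ST}_v,\f^{\ST}\rangle=0$ gives $\langle\f^{\ST}_{uu},\f^{\ST}\rangle=\langle\f^{\ST}_{vv},\f^{\ST}\rangle=0$ and $\langle\f^{\ST}_{uv},\f^{\ST}\rangle=-e^{\omega^{\STt}}$, which account for the $e^{\omega^{\STt}/2}$ entries in the third row and column. Differentiating $\langle\f^{\ST}_{uu},\f^{\ST}\rangle=0$ in $u$ and $\langle\f^{\ST}_{vv},\f^{\ST}\rangle=0$ in $v$ yields $\langle\f^{\ST}_{uu},\f^{\ST}_u\rangle=-Q^{\ST}$ and $\langle\f^{\ST}_{vv},\f^{\ST}_v\rangle=-R^{\ST}$ by the definitions \eqref{eq:QandRT}, producing the $-Q^{\ST}e^{-\omega^{\STt}}$ and $-R^{\ST}e^{-\omega^{\STt}}$ entries. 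Differentiating the null conditions $\langle\f^{\ST}_u,\f^{\ST}_u\rangle=0$ and $\langle\f^{\ST}_v,\f^{\ST}_v\rangle=0$ shows $\langle\f^{\ST}_{uu},\f^{\ST}_u\rangle$ and $\langle\f^{\ST}_{vv},\f^{\ST}_v\rangle$ are purely imaginary, hence $Q^{\ST},R^{\ST}\in i\R$ as asserted.

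The remaining entries are the ones carrying the mean curvature. Here I would use that the mean curvature vector is the normal component of $e^{-\omega^{\STt}}\f^{\ST}_{uv}$ in the Lorentzian null frame; comparing this with the identification $H=H_1\,ie^{-\omega^{\STt}/2}\f^{\ST}_u+H_2\,ie^{-\omega^{\STt}/2}\f^{\ST}_v$ identifies $\langle\f^{\ST}_{uv},\f^{\ST}_u\rangle=\ell\,e^{\omega^{\STt}}$ and $\langle\f^{\ST}_{uv},\f^{\ST}_v\rangle=m\,e^{\omega^{\STt}}$, where $\ell=\langle H,\f^{\ST}_u\rangle$ and $m=\langle H,\f^{\ST}_v\rangle$. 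Since $H$ is normal to the immersion, $\Re\langle H,\f^{\ST}_u\rangle=\Re\langle H,\f^{\ST}_v\rangle=0$, which is precisely the stated relations $\langle H,\f^{\ST}_u\rangle=-\langle\f^{\ST}_u,H\rangle$ and $\langle H,\f^{\ST}_v\rangle=-\langle\f^{\ST}_v,H\rangle$, and gives $\ell,m\in i\R$; this matches the fact that $\langle\f^{\ST}_{uv},\f^{\ST}_u\rangle$ and $\langle\f^{\ST}_{uv},\f^{\ST}_v\rangle$ are imaginary. Finally, differentiating $\langle\f^{\ST}_u,\f^{\ST}_v\rangle=e^{\omega^{\STt}}$ in $u$ and in $v$, and using $\bar\ell=-\ell$, $\bar m=-m$, delivers $\langle\f^{\ST}_{uu},\f^{\ST}_v\rangle=(\omega^{\ST}_u+\ell)e^{\omega^{\STt}}$ and $\langle\f^{\ST}_{vv},\f^{\ST}_u\rangle=(\omega^{\ST}_v+m)e^{\omega^{\STt}}$. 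Substituting all of these inner products into the expansion formula reproduces \eqref{eq:UVT} column by column. I expect the only genuinely geometric step, and the main obstacle, to be the identification of $\ell$ and $m$ with the mixed inner products through the mean curvature; everything else is formal differentiation of the constraints, the sole care being the signs produced by the indefinite form $\r{P}$ and the conjugate-linearity of $\langle\cdot,\cdot\rangle$ in its second argument.
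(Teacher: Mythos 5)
Your proposal is correct, and it is the standard computation the paper has in mind (the paper itself omits the proof of this Lemma, deferring to the reference [DK:timelike]): establish that the Gram matrix of the frame is $\r{P}$, invert it to get the expansion formula, and then obtain every entry of $\mathcal U_{\ST}$, $\mathcal V_{\ST}$ by differentiating the horizontality and null-coordinate constraints. The only step where a word more would help is the identification $\langle \f^{\ST}_{uv},\f^{\ST}_u\rangle=\ell e^{\omega^{\STt}}$: it uses that the tangential part of $\f^{\ST}_{uv}$ vanishes (equivalently, that $\langle \f^{\ST}_{uv},\f^{\ST}_u\rangle$ and $\langle \f^{\ST}_{uv},\f^{\ST}_v\rangle$ are purely imaginary, which follows by differentiating $\langle \f^{\ST}_u,\f^{\ST}_u\rangle=0$ in $v$ and $\langle \f^{\ST}_v,\f^{\ST}_v\rangle=0$ in $u$, or from $\Gamma^{u}_{uv}=\Gamma^{v}_{uv}=0$ for the metric $2e^{\omega^{\STt}}\d u\,\d v$); your appeal to the normality of $H$ covers the same point, so the argument is complete.
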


\begin{Corollary}
 \label{MCT}
 For a $1$-form $\alpha_{\ST}$ satisfying  \eqref{eq:alphaT} 
 and  \eqref{eq:UVT}, the following statements hold$:$
\begin{enumerate}
 \item The {\rm mean curvature 1-form} $\sigma_H^{\ST}
 =  \Omega(H, \d \f^{\ST}) =\ell \d u + m \d v $
  satisfies $i \sigma_H^{\ST} = \langle H, \d\f^{\ST} \rangle 
 = \frac{1}{2} {\rm trace} ( \alpha_{\ST})$.

\item The $1$-form $\alpha^{\ST}$ satisfies the Maurer-Cartan equations 
 if and only if
 \begin{align*}
& \omega^{\ST}_{u v} - \left(1 - \frac{1}{2}|H|^2 \right) e^{\omega^{\STt}} 
+  Q^{\ST} R^{\ST} e^{-2 \omega^{\ST}} =0, \\
& \d \sigma^{\ST}_H =0, \quad 
 Q^{\ST}_{v} e^{-2\omega^{\STt}} = 
- (\ell e^{-\omega^{\STt}})_u , \quad 
  R^{\ST}_{u} e^{-2\omega^{\STt}} = - (m e^{-\omega^{\STt}})_v
 \end{align*}
\end{enumerate}
\end{Corollary}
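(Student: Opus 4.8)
The plan is to follow the same two-step scheme used for the $\CP$ and $\CH$ corollaries, adapting it to the null coordinates $(u,v)$ and the $\r{P}$-twisted indefinite inner product of $\CHI$. For the first statement I would read off the trace of $\alpha_{\ST}$ directly from \eqref{eq:UVT}: the diagonal entries of $\mathcal U_{\ST}$ add up to $2\ell$ and those of $\mathcal V_{\ST}$ to $2m$, so that $\tfrac12\tr(\alpha_{\ST}) = \ell\,\d u + m\,\d v$. By the preceding Lemma $\ell = \langle H,\f^{\ST}_u\rangle$ and $m = \langle H,\f^{\ST}_v\rangle$, whence $\tfrac12\tr(\alpha_{\ST}) = \langle H,\d\f^{\ST}\rangle$. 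To identify this with $i\sigma_H^{\ST}$ I would use the splitting $\langle\,\cdot\,,\,\cdot\,\rangle = g + i\Omega$ on the horizontal subspace; since $H$ and $\f^{\ST}_u,\f^{\ST}_v$ are horizontal and the Lemma guarantees $\ell,m\in i\R$, the real part $g(H,\d\f^{\ST})$ vanishes, leaving $\langle H,\d\f^{\ST}\rangle = i\Omega(H,\d\f^{\ST}) = i\sigma_H^{\ST}$.

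For the second statement I would express the Maurer-Cartan equation $\d\alpha_{\ST} + \alpha_{\ST}\wedge\alpha_{\ST} = 0$ in the equivalent matrix form
\[
(\mathcal V_{\ST})_u - (\mathcal U_{\ST})_v + [\mathcal U_{\ST},\mathcal V_{\ST}] = 0,
\]
and then compute it entry by entry using \eqref{eq:UVT}. Taking the trace annihilates the commutator and yields $m_u - \ell_v = 0$; since $\d\sigma_H^{\ST} = (m_u - \ell_v)\,\d u\wedge\d v$, this is exactly $\d\sigma_H^{\ST} = 0$. The off-diagonal $(2,1)$ and $(1,2)$ entries, after inserting the commutator contributions and simplifying with the horizontality relations and $m = -\bar\ell$, should collapse to $Q^{\ST}_v e^{-2\omega^{\STt}} = -(\ell e^{-\omega^{\STt}})_u$ and $R^{\ST}_u e^{-2\omega^{\STt}} = -(m e^{-\omega^{\STt}})_v$, respectively. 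Finally the $(1,1)$ entry (equivalently the $(2,2)$ entry) contributes $-\omega^{\ST}_{uv} + (m_u - \ell_v)$ from the derivative part, while the commutator supplies the remaining $e^{\omega^{\STt}}$ and $Q^{\ST}R^{\ST}e^{-2\omega^{\STt}}$ terms; once the trace condition is imposed this reduces to the stated field equation, with the coefficient $\bigl(1 - \tfrac12|H|^2\bigr)$ emerging from the $\ell m = -|\ell|^2$ products in the commutator.

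I expect the only genuine difficulty to be the sign bookkeeping. Compared with the $\CP$ and $\CH$ computations, the indefinite metric and the matrix $\r{P}$ flip several entries, and because $u,v$ are real null coordinates rather than conjugate complex ones, the conjugate $\overline{Q}$ of the definite cases is replaced by the independent purely imaginary function $R^{\ST}$; keeping track of these changes is what turns $|Q|^2$ into the product $Q^{\ST}R^{\ST}$ and fixes the sign in front of $e^{\omega^{\STt}}$. The computation itself is routine and completely parallel to the earlier corollaries, so no new conceptual input is needed beyond the data assembled in the preceding Lemma.
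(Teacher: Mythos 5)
The paper states this corollary without proof (deferring to \cite{DK:timelike}), and your direct verification of the zero-curvature equation $(\mathcal V_{\ST})_u-(\mathcal U_{\ST})_v+[\mathcal U_{\ST},\mathcal V_{\ST}]=0$ entry by entry, together with the trace computation for part (1), is exactly the intended argument; I have checked that the $(1,1)$, $(1,2)$, $(2,1)$ entries and the trace do produce the four stated equations. One point needs correcting, precisely where the timelike case differs from the $\SCP$ and $\SCH$ cases: the relation $m=-\bar\ell$ does \emph{not} hold here, and the preceding Lemma deliberately replaces it by the statement that $\ell$ and $m$ take values in $i\R$. Consequently your claim that the coefficient $\bigl(1-\tfrac12|H|^2\bigr)$ emerges from ``$\ell m=-|\ell|^2$'' is wrong as written: from $H=H_1ie^{-\frac12\omega^{\STt}}\f^{\ST}_u+H_2ie^{-\frac12\omega^{\STt}}\f^{\ST}_v$ and $\langle\f^{\ST}_u,\f^{\ST}_u\rangle=\langle\f^{\ST}_v,\f^{\ST}_v\rangle=0$, $\langle\f^{\ST}_u,\f^{\ST}_v\rangle=e^{\omega^{\STt}}$ one gets $\ell=iH_2e^{\frac12\omega^{\STt}}$ and $m=iH_1e^{\frac12\omega^{\STt}}$, which are independent purely imaginary functions, so $\ell m=-H_1H_2e^{\omega^{\STt}}=-\tfrac12|H|^2e^{\omega^{\STt}}$ (since $\langle H,H\rangle=2H_1H_2$), and this, not $-|\ell|^2$, is what supplies the $\tfrac12|H|^2$ term in the $(1,1)$ entry. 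The off-diagonal computations are unaffected: the $(2,1)$ entry gives $\ell_u-\ell\,\omega^{\ST}_u+Q^{\ST}_ve^{-\omega^{\STt}}=0$, i.e.\ $Q^{\ST}_ve^{-2\omega^{\STt}}=-(\ell e^{-\omega^{\STt}})_u$, without any use of $m=-\bar\ell$, and likewise for the $R^{\ST}$ equation. With that one identity repaired, the proof is complete and routine, as you anticipate.
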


 \begin{Theorem}[Fundamental theorem for Lagrangian immersions into
 $\CHI$]\label{thm:fundCHI}
 Assume $f^{\ST}:\D \rightarrow  \CHI$ is a conformal 
 Lagrangian immersion and let $\f^{\ST}$ denote one of 
 its horizontal lifts and $\mathcal{F}_{\ST}$ 
 the corresponding coordinate frame \eqref{eq:coordinateframeCHI}.
 Then $\alpha_{\ST} = \mathcal{F}_{\ST}^{-1} \d \mathcal{F}_{\ST} 
 = \mathcal{U}_{\ST} \d u + \mathcal{V}_{\ST} \d v$ 
 with $\mathcal{U}_{\ST}$ and $\mathcal{V}_{\ST}$ 
 have the form \eqref{eq:UVT} 
 and their coefficients satisfy the equations stated 
 in Corollary {\rm \ref{MCT}}.
 
 Conversely, given a functions $\omega^{\ST}, H$ on $\D$ together with 
 a cubic differential $Q^{\ST}\d u^3 + R^{\ST}\d v^3$ and 
 a $1$-form $\sigma^{\SCH}_H = 
 \ell \d u  + m \d v$ on $\D$ such that the conditions of 
 Corollary {\rm \ref{MCT}}
 are satisfied $($with $\langle H, \f^{\ST}_u \rangle$ 
 replaced by $m)$, 
 then there exists a solution $\mathcal{F}_{\ST} \in \UIT$ such that 
 $\f^{\ST} = \mathcal{F}_{\ST} e_3$ is a horizontal lift 
 of the null Lagrangian immersion $f^{\ST} = \pi \circ \f^{\ST}$.
\end{Theorem}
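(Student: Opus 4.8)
The plan is to follow the two-step scheme already used for Theorem~\ref{thm:fundCP} and Theorem~\ref{thm:fundCH}, transcribed from the conformal coordinates $z,\bar z$ and the positive definite form to the null coordinates $u,v$ and the indefinite form $\r{P}$. The forward direction is essentially contained in the material preceding the statement: for a timelike conformal Lagrangian immersion $f^{\ST}$ with horizontal lift $\f^{\ST}$ and coordinate frame $\mathcal{F}_{\ST}$, the Lemma above computes $\alpha_{\ST} = \mathcal{F}_{\ST}^{-1}\d\mathcal{F}_{\ST}$ explicitly in the form \eqref{eq:UVT}; and since $\mathcal{F}_{\ST}$ takes values in the Lie group $\UIT$, its Maurer-Cartan form automatically satisfies $\d\alpha_{\ST} + \alpha_{\ST}\wedge\alpha_{\ST} = 0$, which by Corollary~\ref{MCT} is exactly the stated PDE system. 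Thus I would simply quote the Lemma and Corollary~\ref{MCT}.

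For the converse I would start from the prescribed data $\omega^{\ST}, H, Q^{\ST}, R^{\ST}, \ell, m$, build the matrices $\mathcal{U}_{\ST}, \mathcal{V}_{\ST}$ by the formulas \eqref{eq:UVT}, and set $\alpha_{\ST} = \mathcal{U}_{\ST}\,\d u + \mathcal{V}_{\ST}\,\d v$. First I would verify that $\mathcal{U}_{\ST}, \mathcal{V}_{\ST}$ take values in $\Lie\UIT$, i.e.\ satisfy $X^T\r{P} + \r{P}\,\overline{X} = 0$; a direct entrywise check shows this holds precisely because $\omega^{\ST}$ is real and $\ell, m, Q^{\ST}, R^{\ST}$ are purely imaginary. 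Next, writing the zero-curvature equation as $(\mathcal{V}_{\ST})_u - (\mathcal{U}_{\ST})_v + [\mathcal{U}_{\ST}, \mathcal{V}_{\ST}] = 0$ and comparing entries, Corollary~\ref{MCT} identifies this flatness with the assumed compatibility conditions. Since $\D$ is contractible and $\alpha_{\ST}$ is a flat $\Lie\UIT$-valued $1$-form, it integrates to a map $\mathcal{F}_{\ST}:\D\to\UIT$ with $\mathcal{F}_{\ST}^{-1}\d\mathcal{F}_{\ST} = \alpha_{\ST}$, unique up to a constant left factor in $\UIT$, which I normalize at a base point.

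Then I would put $\f^{\ST} = \mathcal{F}_{\ST}e_3$ and read the geometry off the columns $\f_j = \mathcal{F}_{\ST}e_j$. Expanding $\d\mathcal{F}_{\ST} = \mathcal{F}_{\ST}\alpha_{\ST}$ column by column yields $\f^{\ST}_u = e^{\frac{1}{2}\omega^{\STt}}\f_1$ and $\f^{\ST}_v = e^{\frac{1}{2}\omega^{\STt}}\f_2$. Because $\mathcal{F}_{\ST}\in\UIT$, the columns obey $\langle\f_i,\f_j\rangle = (\r{P})_{ij}$; in particular $\langle\f^{\ST},\f^{\ST}\rangle = -1$, so $\f^{\ST}\in H^5_3$, while $\langle\f^{\ST}_u,\f^{\ST}\rangle = \langle\f^{\ST}_v,\f^{\ST}\rangle = 0$ gives horizontality \eqref{eq:horizontalCHI}, the relations $\langle\f^{\ST}_u,\f^{\ST}_u\rangle = \langle\f^{\ST}_v,\f^{\ST}_v\rangle = 0$ with $\langle\f^{\ST}_u,\f^{\ST}_v\rangle = e^{\omega^{\STt}} > 0$ show that $(u,v)$ are null coordinates with Lorentzian metric $2e^{\omega^{\STt}}\,\d u\,\d v$, and $\Im\langle\f^{\ST}_u,\f^{\ST}_v\rangle = 0$ is the Legendre property, so $f^{\ST} = \pi\circ\f^{\ST}$ is timelike Lagrangian. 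Linear independence of $\f_1,\f_2$ makes $\f^{\ST}$, hence $f^{\ST}$, an immersion, and computing $\langle\f^{\ST}_{uuu},\f^{\ST}\rangle$ and $\langle\f^{\ST}_{vvv},\f^{\ST}\rangle$ from the column recursion recovers the prescribed $Q^{\ST}, R^{\ST}$, as in the forward Lemma.

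I expect the main obstacle to be the signature and reality bookkeeping rather than any conceptual difficulty: one must track carefully how the indefinite form $\r{P}$ permutes the first two coordinates and negates the third, and confirm that the purely imaginary coefficients $Q^{\ST}, R^{\ST}, \ell, m$ enter both the $\Lie\UIT$-membership check and the flatness computation consistently. Once these are settled, the argument is a faithful transcription of the $\CP$ and $\CH$ proofs, with $z,\bar z$ replaced by the null coordinates $u,v$.
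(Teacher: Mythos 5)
Your proposal is correct and follows exactly the route the paper sets up: the forward direction is the content of the Lemma and Corollary~\ref{MCT} preceding the statement, and the converse is the standard integration of the flat $\operatorname{Lie}\UIT$-valued $1$-form on the contractible domain $\D$ followed by reading the geometry off the columns via $\langle\f_i,\f_j\rangle=(\r{P})_{ij}$. The survey itself omits the proof and defers to \cite{DK:timelike}, but your reality/signature checks (in particular $X^T\r{P}+\r{P}\overline{X}=0$ using that $\omega^{\ST}$ is real and $\ell,m,Q^{\ST},R^{\ST}$ are purely imaginary) are exactly what is needed and are carried out correctly.
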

\subsection{Timelike minimal Lagrangian surfaces $\CHI$}
 If we restrict to minimal timelike Lagrangian surfaces, then the 
 equations \eqref{eq:UVT} together with $\ell= m = 0$ show that the determinant of the coordinate frame 
 is a constant (in $S^1$). So we can, and will, assume from here
 on that the horizontal lift of the given minimal immersion into 
 $\CHI$ is scaled (by a constant in $S^1$) 
 such that the corresponding
 coordinate frame $\mathcal F_{\ST}$ is in $\SUIT$. 
It is clear that the Maurer-Cartan form $ \alpha_{\ST} = \mathcal F_{\ST}^{-1} \d \mathcal F_{\ST} = \mathcal U_{\ST} \d u + \mathcal V_{\ST} \d  v$ of the minimal surface 
 is given by  
\begin{align}
 \mathcal U_{\ST} = \begin{pmatrix} 
 \frac{1}{2}\omega^{\ST}_u   & 0 & e^{\frac{1}{2}\omega^{\STt}}\\
 -Q^{\ST}e^{-\omega^{\STt}} &-\frac{1}{2}\omega^{\ST}_u & 0\\[0.1cm]
 0 &  e^{\frac{1}{2}\omega^{\STt}}& 0
 \end{pmatrix}, \quad 
 \mathcal V_{\ST} =
 \begin{pmatrix} 
 - \frac{1}{2}\omega^{\ST}_{v} 
 & - R^{\ST}  e^{-\omega^{\STt}}  & 0 \\
 0 & \frac{1}{2}\omega^{\ST}_{v} & e^{\frac{1}{2}\omega^{\STt}}\\
 e^{\frac{1}{2}\omega^{\STt}}& 0 & 0
\end{pmatrix}. 
 \label{eq:UVTmini}
\end{align} 
The integrability conditions stated in the corollary above 
 then are 
\begin{align}\label{eq:TzitzeicaCHT}
 \omega^{\ST}_{uv} - e^{\omega^{\STt}} +Q^{\ST}
 R^{\ST}e^{- 2 \omega^{\STt}}  =0,\quad \quad  Q^{\ST}_{v} = R^{\ST}_u =0.
\end{align}
 The first equation \eqref{eq:TzitzeicaCHT} is again one of the 
 \textit{Tzitz\'eica equations}.
 From the definition of $Q^{\ST}$ in \eqref{eq:QandRT}, it is clear that 
 \[
  C^{\ST}(u, v) = Q^{\ST}(u) \d u^3 + R^{\ST}(v) \d v^3 
 \]
 is the \textit{purely imaginary} cubic differential of 
 the timelike minimal Lagrangian surface $f^{\ST}$.
 Conversely, let $C^{\ST}$ be a cubic differential and 
 let $\omega^{\ST}$ be a solution of 
 \eqref{eq:TzitzeicaCHT}. Then there exists a frame $\mathcal F_{\ST}$ 
 taking values 
 in $\UIT$ and a timelike minimal Lagrangian surface given by  
 $f^{\ST} = \pi \circ (\mathcal F_{\ST} e_3)$, where $e_3 =(0, 0, 1)^T$.

\begin{Remark}
 The fundamental theorem in Theorem \ref{thm:fundCHI} is 
 still true for a timelike minimal Lagrangian immersions into $\CHI$.
 \end{Remark}

\subsection{Associated families of minimal surfaces and 
 flat connections}
 From \eqref{eq:TzitzeicaCHT}, it is easy to see that there 
 exists a one-parameter 
 family of solutions of \eqref{eq:TzitzeicaCHT} 
 parametrized by $\l \in \R^{+} = \{ \l \in \R\;|\; \l>0\}$; 
 The corresponding family $\{ \omega_{\ST}^{\l},  C_{\ST}^{\l}\}_{\l \in \R^+}$
 then satisfies
\[
  \omega_{\ST}^{\l} = \omega^{\ST}, \quad C_{\ST}^{\l} 
 = \l^{-3} Q^{\ST} \d u^3 + \l^{3} R^{\ST} \d v^3.
\]
 As a consequence, there exists a one-parameter family of timelike minimal 
 Lagrangian surfaces 
 $\{\hat f_{\ST}^{\l}\}_{\l \in \R^{+}}$ such that 
 $\hat f_{\ST}^{\l}|_{\l=1} = f^{\ST}$.
 The family $\{\hat f_{\ST}^{\l}\}_{\l \in \R^{+}}$ will be called the \textit{associated 
 family} of $f^{\ST}$.
 Let $\hat {\mathcal F}_{\ST}^{\l} $ be the coordinate frame of $\hat f_{\ST}^{\l}$.
 Then the Maurer-Cartan form $\hat \alpha_{\ST}^{\l}  = \hat {\mathcal U}_{\ST}^{\l} \d u + 
 \hat {\mathcal V}_{\ST}^{\l} \d v$ of $\hat {\mathcal F}_{\ST}^{\l}$
 for the associated family 
 $\{\hat f_{\ST}^{\l}\}_{\lambda \in \R^{+}}$
 is given by ${\mathcal U}_{\ST}$ and ${\mathcal V}_{\ST}$ as in 
 \eqref{eq:UVTmini} where we have replaced 
 $Q^{\ST}$ and $R^{\ST}$ by $\lambda^{-3} Q^{\ST}$ and $\lambda^3 R^{\ST}$, respectively.
 Then consider the gauge transformation $G^{\l}$ given by
 \begin{equation}\label{eq:extCHI}
 F_{\ST}^{\l}  =\hat {\mathcal F}_{\ST}^{\l} G^{\l}, \quad 
 G^{\l} = \di ( \lambda, \lambda^{-1}, 1).
 \end{equation}
 This implies
\begin{equation}\label{eq:alphaTlambda} 
\alpha_{\ST}^{\l} =  (F_{\ST}^{\l})^{-1}  \d F_{\ST}^{\l} = U_{\ST}^{\l} \d u + V_{\ST}^{\l} \d v	   
\end{equation}
 with $U_{\ST}^{\l} = (G^{\l})^{-1}\hat{\mathcal U}^{\l}_{\ST} G^{\l}$ 
 and $V_{\ST}^{\l} = (G^{\l})^{-1}\hat{\mathcal V}^{\l}_{\ST} G^{\l}$. 
 It is easy to see that $\hat{\mathcal F}^{\l}_{\ST} G^{\l} e_3= 
 \hat{\mathcal F}^{\l}_{\ST} e_3$. 
 Therefore  $f_{\ST}^{\l} = \pi \circ (\hat{\mathcal F}^{\l}_{\ST} G^{\l}e_3) =  \pi \circ (\hat{\mathcal F}^{\l}_{\ST} e_3)  
 = \hat{f}_{\ST}^\l$. 
 Hence we will not distinguish between 
 $\{\hat f_{\ST}^{\lambda}\}_{\lambda \in \R^{+}}$ and 
 $\{f_{\ST}^{\lambda}\}_{\lambda \in \R^{+}}$, and both families will be 
 called the associated family of $f^{\ST}$, and $F_{\ST}$ will also  be called the 
 coordinate frame of $f_{\ST}^{\l}$.
 
 From the discussion in the previous section, 
 the family of Maurer-Cartan forms $\alpha_{\ST}^{\l}$  in \eqref{eq:alphaL}
 of a timelike minimal Lagrangian surface $f^{\ST}: M \to \CHI$  can be 
 computed explicitly as 
  \begin{equation}\label{eq:alphalambda-origT}
 \alpha_{\ST}^{\l} = U_{\ST}^{\lambda} \d u + V_{\ST}^{\lambda} \d v,
 \end{equation} 
 for $\lambda \in \C^{\times}$, where 
 $U_{\ST}^{\lambda}$ and $V_{\ST}^{\lambda}$ are  given by  
\begin{align*}
U_{\ST}^{\lambda} =
\begin{pmatrix}
 \frac{1}{2}\omega^{\ST}_u   & 0 & \l^{-1}e^{\frac{1}{2}\omega^{\STt}}\\
  - \l^{-1 }Q^{\ST}e^{-\omega^{\STt}} &- \frac{1}{2}\omega^{\ST}_u & 0\\
 0 &  \l^{-1}e^{\frac{1}{2}\omega^{\STt}}& 0
 \end{pmatrix}, \quad 
 V_{\ST}^{\lambda} =
 \begin{pmatrix} 
 - \frac{1}{2}\omega^{\ST}_{v}
 & - \l R^{\ST}  e^{-\omega^{\STt}}  & 0 \\
 0 & \frac{1}{2}\omega^{\ST}_v & \l e^{\frac{1}{2}\omega^{\ST}}\\
 \l e^{\frac{1}{2}\omega^{\ST}}& 0 & 0
\end{pmatrix}.
\end{align*}
 It is clear that $\alpha_{\ST}^{\lambda}|_{\lambda=1}$ is 
 the Maurer-Cartan form of the coordinate frame $\mathcal F_{\ST}$ of 
 $f^{\ST}$. 
 Then by the discussion in the previous section, we can characterize a minimal Lagrangian immersion in $\CHI$ in terms of 
 a family of flat connections.
\begin{Theorem}[\cite{DK:timelike}]\label{thm:flatconnectionsT}
 Let $f^{\ST} : \D \to \CHI$ be a timelike minimal Lagrangian surface in $\CHI$ and 
 let $\alpha_{\ST}^{\l}$ be the family of Maurer-Cartan forms defined in 
 \eqref{eq:alphalambda-origT}.
 Then $\d + \alpha_{\ST}^{\lambda}$ gives a family of flat connections
 on $\D \times \SUIT$.
 
 Conversely, given a family of connections  $\d 
 + \alpha_{\ST}^{\lambda}$ 
 on $\D \times \SUIT$, where $\alpha_{\ST}^{\lambda}$ is as 
 in \eqref{eq:alphalambda-origT}, then  $\d + \alpha_{\ST}^{\lambda}$ belongs 
 to an associated 
 famiy of minimal Lagrangian immersions into $\CHI$ 
 if and only if the connection is flat for all 
 $\lambda \in \R^+$.
\end{Theorem}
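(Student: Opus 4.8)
The plan is to follow the pattern established for the $\CP$ and $\CH$ cases in Theorems \ref{thm:flatconnectionsCP} and \ref{thm:flatconnectionsCH}, adapting it to the null coordinates $(u,v)$, to the signature $(2,1)$ inner product, and to the spectral circle being replaced by $\R^+$. The whole statement rests on a single computation: writing $\alpha_{\ST}^\lambda = U_{\ST}^\lambda \,\d u + V_{\ST}^\lambda\,\d v$ as in \eqref{eq:alphalambda-origT}, the flatness of $\d + \alpha_{\ST}^\lambda$ is the zero-curvature equation $\partial_u V_{\ST}^\lambda - \partial_v U_{\ST}^\lambda + [U_{\ST}^\lambda, V_{\ST}^\lambda] = 0$, and I would expand its left-hand side as a Laurent polynomial in $\lambda$. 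Since $U_{\ST}^\lambda = U_0 + \lambda^{-1} U_{-1}$ and $V_{\ST}^\lambda = V_0 + \lambda V_1$ with $U_0, V_0$ diagonal and $U_{-1}, V_1$ off-diagonal, the curvature carries only the powers $\lambda^{-1}, \lambda^0, \lambda^1$, and the three coefficients vanish, respectively, if and only if $Q^{\ST}_v = 0$, the equation $\omega^{\ST}_{uv} - e^{\omega^{\STt}} + Q^{\ST}R^{\ST} e^{-2\omega^{\STt}} = 0$ holds, and $R^{\ST}_u = 0$. In other words, flatness for all $\lambda$ is equivalent to \eqref{eq:TzitzeicaCHT}.

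For the first assertion, I would start from a timelike minimal Lagrangian immersion $f^{\ST}$, whose coordinate frame has Maurer-Cartan form \eqref{eq:UVTmini} with coefficients satisfying \eqref{eq:TzitzeicaCHT} by the discussion leading to that equation. Building $\alpha_{\ST}^\lambda$ via the scaling $Q^{\ST} \mapsto \lambda^{-3}Q^{\ST}$, $R^{\ST}\mapsto\lambda^3 R^{\ST}$ and the gauge $G^\lambda = \di(\lambda,\lambda^{-1},1)$ exactly as in \eqref{eq:extCHI}, the expansion above shows at once that the connection is flat for every $\lambda$. Here one also has to check the reality: for $\lambda\in\R^+$ one has $\bar\lambda = \lambda$, and since $Q^{\ST}, R^{\ST}$ take values in $i\R$, the entries $\lambda^{-1}Q^{\ST}$ and $\lambda R^{\ST}$ remain purely imaginary, so $\alpha_{\ST}^\lambda$ is fixed by the involution $(\maltese_{\ST})$, $\tau(g)(\lambda) = -\ad(\r{P})\overline{g(\bar\lambda)}^T$, and hence takes values in $\Lie(\SUIT)$; this is what makes $\d + \alpha_{\ST}^\lambda$ a connection on $\D \times \SUIT$.

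For the converse, I would run the $\lambda$-expansion backwards: if $\d + \alpha_{\ST}^\lambda$ is flat for all $\lambda\in\R^+$, the three coefficient equations force \eqref{eq:TzitzeicaCHT}, so $\omega^{\ST}$ solves the Tzitz\'eica equation and $Q^{\ST}\d u^3 + R^{\ST}\d v^3$ is a cubic differential of the required type. For each fixed $\lambda\in\R^+$, $\alpha_{\ST}^\lambda$ is then a flat $\Lie(\SUIT)$-valued one-form on the simply connected domain $\D$, so it integrates to a frame $F_{\ST}^\lambda : \D \to \SUIT$ with $(F_{\ST}^\lambda)^{-1}\d F_{\ST}^\lambda = \alpha_{\ST}^\lambda$ after fixing an initial value in $\SUIT$. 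Undoing the gauge, $\hat{\mathcal F}_{\ST}^\lambda := F_{\ST}^\lambda (G^\lambda)^{-1}$ has Maurer-Cartan form of the shape \eqref{eq:UVTmini} with $Q^{\ST}, R^{\ST}$ replaced by $\lambda^{-3}Q^{\ST}, \lambda^3 R^{\ST}$ and the same $\omega^{\ST}$; by Theorem \ref{thm:fundCHI} it is the coordinate frame of a timelike minimal Lagrangian immersion $\hat f_{\ST}^\lambda = \pi\circ(F_{\ST}^\lambda e_3)$. Since this is precisely the construction \eqref{eq:extCHI} of the associated family, the flat family belongs to an associated family, while conversely any associated family is flat for all $\lambda$ by the first part; this establishes the stated equivalence.

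The main obstacle is not the zero-curvature expansion itself, which is routine linear algebra parallel to the $\CP$ and $\CH$ cases, but the careful tracking of the real form: one must verify that the reality condition imposed by $(\maltese_{\ST})$, which involves $\overline{g(\bar\lambda)}^T$ rather than $\overline{g(1/\bar\lambda)}^T$, selects exactly the real locus $\lambda\in\R^+$ and is compatible with $Q^{\ST}, R^{\ST}\in i\R$. A secondary point to confirm is that the combination $Q^{\ST}R^{\ST}e^{-2\omega^{\STt}}$ is invariant under the scaling $Q^{\ST}\mapsto\lambda^{-3}Q^{\ST}$, $R^{\ST}\mapsto\lambda^3 R^{\ST}$, which is exactly why $\omega^{\ST}$ may be taken independent of $\lambda$ and why the whole one-parameter family solves the same Tzitz\'eica equation.
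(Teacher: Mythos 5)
Your proposal is correct and follows essentially the same route as the paper, which proves this theorem only implicitly by appealing to "the discussion in the previous section": the Laurent expansion of the zero-curvature equation in $\lambda^{-1},\lambda^{0},\lambda^{1}$ reproducing \eqref{eq:TzitzeicaCHT}, the reality check for $\lambda\in\R^{+}$ against the involution $(\maltese_{\ST})$, and the converse via integrating the flat connection, undoing the gauge $G^{\l}$, and invoking Theorem \ref{thm:fundCHI}. No substantive difference from the paper's (sketched) argument.
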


\section{Definite Proper Affine Spheres}\label{sc:affineD}
  In this section, we discuss a loop group formulation of definite proper affine spheres.
  The detailed discussion can be found in \cite{DW1, DW2}. 
 The general theory of 
 affine submanifolds can be found in \cite{NS}. We will use again complex coordinates and again restrict to surfaces defined on some simply-connected open subset $\D$ of $\C$. 
 
 \subsection{Basic definitions and results} \label{sc:B}
Classical affine differential geometry studies the properties of an immersed surface $f^{\SEA}: \D \rightarrow \R^3$ which are  invariant under the equi-affine transformations $f^{\SEA} \to Af^{\SEA}+b$, where $A \in \mathrm{SL}_{3}\R$ and $b \in \R^{3}$. The following form in local coordinates $(u_1,u_2)$ is naturally an equi-affine invariant:
\begin{equation}\label{eqfun}
\Lambda = \sum_{i,j} \det \left[ \frac{\partial^2 f}{\partial u_i \partial u_j}, \frac{\partial f}{\partial u_1}, \frac{\partial f}{\partial u_2} \right] (\d u_i \d u_j)\otimes (\d u_1 \wedge \d u_2) , 
\end{equation}
which induces an equi-affinely invariant quadratic form conformal to the Euclidean second fundamental form, called the \textit{affine metric} $g$, by $\Lambda = g \otimes \mathrm{vol}(g)$. Although the Euclidean angle is not invariant under affine transformations, there exists an invariant transversal vector field $\xi$ along $f(\D)$ defined by 
$\xi = \frac{1}{2} \Delta f $, called the \textit{affine normal}. Here 
 $\Delta$ is the Laplacian with respect to $g$. 

Another way to find the affine normal up to sign is by modifying the scale and direction of any transversal vector field (such as the Euclidean normal) to meet two natural characterizing conditions:
\begin{itemize}
\item[(i)] $D_X \xi^{\SEA} =\d \xi^{\SEA} (X) $ is tangent to the surface for any $ X \in T_p \D$,
\item[(ii)] $\xi^{\SEA}$ and $g$ induce the same volume measure on $\D$:
\[
 (\det \left[ f^{\SEA}_{\ast} X, f^{\SEA}_{\ast} Y, ~\xi^{\SEA} \right])^2
 =  |g(X, X)g(Y, Y)-g(X, Y)^2| 
\] 
for any $ X, Y \in T_p \D$. 
\end{itemize}
The formula of Gauss,
\begin{equation}\label{eqde}
    D_X f^{\SEA}_{\ast}Y = f^{\SEA}_{\ast}(\nabla_X Y) + g(X,Y)\xi^{\SEA} ~,
\end{equation}
 or the decomposition of $D_X f^{\SEA}_{\ast}Y$ into tangential 
 and transverse component,
 induces a torsion-free affine connection $\nabla$ on $\D$. Its difference with the  Levi-Civita connection $\nabla^g$ of $g$ is measured by the \textit{affine cubic form} defined  as: 
\begin{equation}\label{eqcu}
    C^{\SEA}(X,Y,Z):= g(\nabla_X Y-\nabla^g_X Y,Z).
\end{equation}
It is actually symmetric in all $3$ arguments. The \textit{affine shape operator} $S$ defined by the formula of Weingarten:
\[
 D_X \xi^{\SEA} = -f^{\SEA}_{\ast}(S (X)) ,
 \]
is self-adjoint with respect to $g$. The \textit{affine mean curvature} 
 $H$ and the \textit{affine Gauss curvature} $K$ are defined as 
 
\[
 H = 
 \frac{1}{2}\tr S \quad\mbox{and} \quad    K = \det S. 
\]

In the following we assume that the affine metric $g$ is definite. This means that  $f^{\SEA}(\D)$ is locally strongly convex and oriented (since its Euclidean second fundamental form is positive definite). Then there exist conformal coordinates $(x, y) \in \D$, that is, 
\[
g= 2 e^{\omega^{\SEAt}} (\d x^2 + \d y^2) = 2 e^{\omega^{\SEAt}}  |\d z|^2 = e^{\omega^{\SEAt}} (\d z \otimes \d \bar{z} + \d \bar{z} \otimes \d z), 
\]
 where   $z=x+iy$. 
 Then it is known that the affine normal $\xi^{\SEA}$ of a Blaschke immersion 
 can be represented in the form
\[
 \xi^{\SEA}=\frac{1}{2} \Delta f^{\SEA} = e^{-\omega^{\SEAt} } f^{\SEA}_{z \bar{z}}.
\]
 The affine normal $\xi^{\SEA}$ points to the concave side of the surface, and the orientation given by $i \, \d z \wedge  \d \bar{z}$ or $\d u \wedge  \d v$ is consistent with the orientation induced by $\xi^{\SEA}$. This $z$ coordinate essentially defines $\D$ as a uniquely determined Riemann surface. 

Alternatively we are studying \textit{affine-conformal} immersions $f$ of any Riemann surface $\D$ into $\R^3$:
\begin{equation}\label{eqconf}
\det [f^{\SEA}_{z} \; f^{\SEA}_{\bar{z}} \; f^{\SEA}_{zz}] = 0  
 = \det [f^{\SEA}_{z} \; f^{\SEA}_{\bar{z}} \;f^{\SEA}_{\bar{z}\bar{z}}],   \quad
 \mbox{and} \quad  
      \det[f^{\SEA}_{z} \; f^{\SEA}_{\bar{z}} \; f^{\SEA}_{z \bar{z} }]  = i e^{2\omega^{\SEAt} }.
\end{equation}
 The first condition here reflects that $f^{\SEA}$ is affine-conformal.
 Moreover, we introduce a function $Q^{\SEA}$ by  
\begin{equation} 
f^{\SEA}_{zz}  = \omega^{\SEA}_z f^{\SEA}_z + Q^{\SEA} e^{-\omega^{\SEAt}} f^{\SEA}_{\bar{z}}. 
\end{equation}
Then direct computations derive the fundamental affine invariants:  $g= 2e^{\omega^{\SEAt}}  | \d z|^2 $ by \eqref{eqfun} and $C^{\SEA}=Q^{\SEA} \d z^3+\overline{Q^{\SEA}} \d \bar{z}^3 $ by \eqref{eqde} and \eqref{eqcu}. We also have 
\begin{equation} \label{eqforQ}
\det[f^{\SEA}_{z} \; f^{\SEA}_{zz} \; f^{\SEA}_{zzz }] =  i (Q^{\SEA})^2 . 
\end{equation}
The shape operator $S$ has the special form 
\begin{equation} \label{shape}
S = \left(
      \begin{array}{cc}
        H & -e^{-2\omega^{\SEAt} } \overline{Q^{\SEA}}_z  \\
        -e^{-2\omega^{\SEAt} } Q^{\SEA}_{\bar{z}} & H  \\
      \end{array}
    \right) , 
\end{equation}
where $H=-e^{-\omega^{\SEAt} } \omega^{\SEA} _{z\bar{z}} - |Q^{\SEA}|^2 e^{-3\omega^{\SEAt} } $ is the affine mean curvature.

 \subsection{Maurer-Cartan form and Tzitz\'eica equation} \label{sc:MC}

The relations discussed above can also be illustrated by computing the evolution equations for the positively oriented frame 
 \[
  \mathcal{F}_{\SEA} = (e^{- \frac{1}{2}\omega^{\SEA}} f^{\SEA}_z, e^{- \frac{1}{2}\omega^{\SEAt}}f^{\SEA}_{\bar{z}}, e^{-\omega^{\SEAt} } f^{\SEA}_{z \bar{z}}),
 \]
 where we use $\xi^{\SEA} = e^{-\omega^{\SEAt} } f^{\SEA}_{z \bar{z}}$. Then 
$\det[f^{\SEA}_{z} \; f^{\SEA}_{\bar{z}} \; f^{\SEA}_{z \bar{z} }]   
 = i e^{2\omega^{\SEAt} }$ implies 
 $\det \mathcal{F}_{\SEA} = i$ and $ \mathcal{F}_{\SEA}(p_0)^{-1} \mathcal{F}_{\SEA}  \in \mathrm{SL}_3 \C$ follows for any base point $p_0 \in \D$.
 
\begin{Theorem}\label{DAMC}
 The Maurer-Cartan form 
\begin{equation} \label{eqfr}
 \mathcal F_{\SEA}^{-1} \d \mathcal F_{\SEA}= \mathcal F_{\SEA}^{-1} 
 ({\mathcal F}_{\SEA})_z \d z
 +  \mathcal F^{-1} ({\mathcal F}_{\SEA})_{\bar z} \d \bar z =  \mathcal U_{\SEA}  \d z  
+  \mathcal V_{\SEA} \d \bar z 
\end{equation}
 can be computed as 
\begin{align}  \label{DAMCUV}
 \mathcal U_{\SEA} = \begin{pmatrix} 
   \frac{1}{2}\omega^{\SEA}_z    & 0 & -H e^{ \frac{\omega^{\SEAt}}{2}} \\ 
  Q^{\SEA} e^{-\omega^{\SEAt}} & - \frac{1}{2}\omega^{\SEA}_z  & e^{-\frac{3}{2}\omega^{\SEAt}  } Q^{\SEA}_{\bar{z}}  \\
  0 & \! \! \! e^{ \frac{1}{2}\omega^{\SEAt}} & 0 \\
 \end{pmatrix}, \quad 
 \mathcal V_{\SEA} = 
 \begin{pmatrix} 
 -\frac{1}{2}\omega^{\SEA}_{\bar{z}} & \overline { Q^{\SEA}}  e^{-\omega^{\SEAt} }  & e^{-\frac{3}{2}\omega^{\SEAt}} \overline{Q^{\SEA}}_z \\
 0 & \frac{1}{2}\omega^{\SEA}_{\bar{z}} & -H e^{ \frac{1}{2}\omega^{\SEAt}} \\
  e^{ \frac{1}{2}\omega^{\SEAt}}  & 0 & 0
\end{pmatrix} . 
\end{align}
\end{Theorem}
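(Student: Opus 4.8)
The statement is purely the computation of $\mathcal{F}_{\SEA}^{-1}\d\mathcal{F}_{\SEA}$, so the plan is to differentiate the three columns of $\mathcal{F}_{\SEA}$ and re-express the results in the frame itself, reading off $\mathcal{U}_{\SEA}$ and $\mathcal{V}_{\SEA}$ column by column. To keep the scalings organized I would first strip them off. Write $\mathcal{F}_{\SEA}=R\,D$, where $R=(f^{\SEA}_z,\,f^{\SEA}_{\bar z},\,\xi^{\SEA})$ collects the unscaled vectors and $D=\di(e^{-\frac12\omega^{\SEAt}},e^{-\frac12\omega^{\SEAt}},1)$ (recall $\xi^{\SEA}=e^{-\omega^{\SEAt}}f^{\SEA}_{z\bar z}$ is already the third column, so it carries no extra factor). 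Since $D$ is diagonal, the Maurer--Cartan form splits as
\begin{equation*}
\mathcal{F}_{\SEA}^{-1}(\mathcal{F}_{\SEA})_z = D^{-1}\big(R^{-1}R_z\big)D + D^{-1}D_z,\qquad D^{-1}D_z=\di\big(-\tfrac12\omega^{\SEA}_z,-\tfrac12\omega^{\SEA}_z,0\big).
\end{equation*}
Thus the whole problem reduces to finding the structure matrix $A:=R^{-1}R_z$ of the unscaled frame, i.e.\ to expanding $f^{\SEA}_{zz}$, $f^{\SEA}_{z\bar z}$ and $\xi^{\SEA}_z$ in the basis $\{f^{\SEA}_z,f^{\SEA}_{\bar z},\xi^{\SEA}\}$; the matrix $\mathcal{V}_{\SEA}$ I would then obtain by a reality symmetry rather than by repeating the work.

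\textbf{The structure matrix.} For the columns I would use the relations already on hand: the defining equation $f^{\SEA}_{zz}=\omega^{\SEA}_z f^{\SEA}_z+Q^{\SEA}e^{-\omega^{\SEAt}}f^{\SEA}_{\bar z}$, its complex conjugate $f^{\SEA}_{\bar z\bar z}=\omega^{\SEA}_{\bar z}f^{\SEA}_{\bar z}+\overline{Q^{\SEA}}e^{-\omega^{\SEAt}}f^{\SEA}_z$ (legitimate since $f^{\SEA}$ is $\R^3$-valued), and $f^{\SEA}_{z\bar z}=e^{\omega^{\SEAt}}\xi^{\SEA}$. The only genuine computation is $\xi^{\SEA}_z$: differentiating $\xi^{\SEA}=e^{-\omega^{\SEAt}}f^{\SEA}_{z\bar z}$ and substituting $f^{\SEA}_{zz\bar z}$, obtained by differentiating the $f^{\SEA}_{zz}$-relation in $\bar z$ and re-using $f^{\SEA}_{z\bar z}$ and $f^{\SEA}_{\bar z\bar z}$, gives
\begin{equation*}
\xi^{\SEA}_z = \big(e^{-\omega^{\SEAt}}\omega^{\SEA}_{z\bar z}+|Q^{\SEA}|^2 e^{-3\omega^{\SEAt}}\big)f^{\SEA}_z + Q^{\SEA}_{\bar z}e^{-2\omega^{\SEAt}}f^{\SEA}_{\bar z}.
\end{equation*}
Since $H=-e^{-\omega^{\SEAt}}\omega^{\SEA}_{z\bar z}-|Q^{\SEA}|^2 e^{-3\omega^{\SEAt}}$, the first coefficient is exactly $-H$, so $\xi^{\SEA}_z=-H f^{\SEA}_z+Q^{\SEA}_{\bar z}e^{-2\omega^{\SEAt}}f^{\SEA}_{\bar z}$. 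This gives $A$ with columns $(\omega^{\SEA}_z,\,Q^{\SEA}e^{-\omega^{\SEAt}},\,0)$, $(0,\,0,\,e^{\omega^{\SEAt}})$ and $(-H,\,Q^{\SEA}_{\bar z}e^{-2\omega^{\SEAt}},\,0)$.

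\textbf{Assembly and symmetry.} Conjugating $A$ by $D$ rescales the off-diagonal entries ($e^{\omega^{\SEAt}}\mapsto e^{\frac12\omega^{\SEAt}}$, $-H\mapsto -He^{\frac12\omega^{\SEAt}}$, $Q^{\SEA}_{\bar z}e^{-2\omega^{\SEAt}}\mapsto Q^{\SEA}_{\bar z}e^{-\frac32\omega^{\SEAt}}$, while the lower-left $Q^{\SEA}e^{-\omega^{\SEAt}}$ is unchanged), and adding $D^{-1}D_z$ shifts the diagonal to $(\tfrac12\omega^{\SEA}_z,-\tfrac12\omega^{\SEA}_z,0)$, reproducing $\mathcal{U}_{\SEA}$ exactly. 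For $\mathcal{V}_{\SEA}$ I would exploit that $f^{\SEA}$ and $\xi^{\SEA}$ are real: conjugation swaps the first two columns of $\mathcal{F}_{\SEA}$ and fixes the third and interchanges $\partial_z\leftrightarrow\partial_{\bar z}$, so with $\Pi$ the transposition of the first two basis vectors one gets $\mathcal{V}_{\SEA}=\Pi\,\overline{\mathcal{U}_{\SEA}}\,\Pi$, and reading this off yields the stated $\mathcal{V}_{\SEA}$.

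\textbf{Main obstacle.} The one delicate step is the computation of $\xi^{\SEA}_z$. The two contributions to the $\xi^{\SEA}$-component, from $-\omega^{\SEA}_z e^{-\omega^{\SEAt}}f^{\SEA}_{z\bar z}$ and from $f^{\SEA}_{zz\bar z}$, must cancel (this is the Weingarten fact, condition (i), that $D_X\xi^{\SEA}$ is tangent), and the surviving $f^{\SEA}_z$-coefficient must be recognized as $-H$; both hinge on carrying the $e^{-\omega^{\SEAt}}$ factors correctly through the substitution of $f^{\SEA}_{\bar z\bar z}$. Everything else is bookkeeping of the diagonal gauge $D$.
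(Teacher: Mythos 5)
Your computation is correct and is exactly the direct verification the paper leaves implicit (the theorem is stated as the result of "computing the evolution equations" for the frame, with no written proof): the structure relations $f^{\SEA}_{zz}=\omega^{\SEA}_z f^{\SEA}_z+Q^{\SEA}e^{-\omega^{\SEAt}}f^{\SEA}_{\bar z}$, $f^{\SEA}_{z\bar z}=e^{\omega^{\SEAt}}\xi^{\SEA}$ and the Weingarten identity $\xi^{\SEA}_z=-Hf^{\SEA}_z+Q^{\SEA}_{\bar z}e^{-2\omega^{\SEAt}}f^{\SEA}_{\bar z}$ give precisely the stated $\mathcal{U}_{\SEA}$ after the diagonal gauge, and $\mathcal{V}_{\SEA}=\Pi\,\overline{\mathcal{U}_{\SEA}}\,\Pi$ follows from the reality of $f^{\SEA}$ and $\xi^{\SEA}$. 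The $RD$-splitting and the reality symmetry are a clean way to organize the bookkeeping, consistent with the shape-operator formula \eqref{shape} and the expression for $H$ given in the paper.
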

The compatibility condition $(\mathcal{F}_{\SEA})_{z\bar{z}} = 
 (\mathcal{F}_{\SEA})_{\bar{z}z}$ (or the flatness of $\mathcal{F}_{\SEA}^{-1}  \d \mathcal{F}_{\SEA}$)  is equivalent to the  two structure equations: 
\begin{eqnarray}
\label{eq:FundaeqDA}
  H &=& - e^{-\omega^{\SEAt} } \omega^{\SEA}_{z\bar{z}} -  |Q^{\SEA}|^2 e^{-3\omega^{\SEAt} },  \\
 \label{eq:FundaeqDA2}
  H_{\bar{z}} &=&  e^{-3\omega^{\SEAt} } \overline{Q^{\SEA}} Q^{\SEA}_{\bar{z}} -  e^{-\omega^{\SEAt} } 
  (e^{-\omega^{\SEAt} }\overline{Q^{\SEA}}_z)_z .  
\end{eqnarray}
 The first equation is  the \textit{Gauss equation} and the second 
equation is the \textit{Codazzi equation} for $S$.
Altogether we have the following characterization of convex affine surfaces in $\R^3$.
\begin{Theorem}[Fundamental theorem for definite Blaschke
 immersions into $\R^3$]\label{thm:fundDA}
 Assume $f^{\SEA}: \D \rightarrow  \R^3$ is an affine-conformal immersion. Define $\omega^{\SEA}, Q^{\SEA}, H$ and the frame $\mathcal{F}_{\SEA}$ as above. Then its affine metric is $g= 2e^{\omega^{\SEAt}}  | \d z|^2 $,    its affine cubic form is $C^{\SEA}=Q^{\SEA} \d z^3+\overline{Q^{\SEA}} \d \bar{z}^3 $, and they satisfy the compatibility conditions \eqref{eq:FundaeqDA} and \eqref{eq:FundaeqDA2}, which are also equivalent to the flatness of $\alpha_{\SEA} = \mathcal{F}_{\SEA}^{-1} \d \mathcal{F}_{\SEA} = \mathcal{U}_{\SEA} \d z + \mathcal{V}_{\SEA} \d \bar{z}$ with $\mathcal{U}_{\SEA}$ and $\mathcal{V}_{\SEA}$ 
 having the form \eqref{DAMCUV}.
 
 Conversely, given a positive symmetric $2$-form $g=2 e^{\omega^{\SEAt}}  |\d z|^2$ and a symmetric $3$-form $C^{\SEA}=Q^{\SEA} \d z^3+\overline{Q^{\SEA}} \d \bar{z}^3 $ 
 on $\D \subset \C$ such that $H$ defined by  \eqref{eq:FundaeqDA} satisfies \eqref{eq:FundaeqDA2}, then there exists a surface $($unique up to affine motion$)$ such that $g,C^{\SEA}$ are the induced affine metric and affine cubic form respectively. 
\end{Theorem}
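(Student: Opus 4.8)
The plan is to prove the two assertions separately, relying on the computations already recorded above.

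\emph{Direct part.} Here I would simply assemble what is already in hand. For an affine-conformal immersion the expressions for the affine metric $g = 2 e^{\omega^{\SEAt}}|\d z|^2$ and the cubic form $C^{\SEA} = Q^{\SEA}\d z^3 + \overline{Q^{\SEA}}\,\d \bar z^3$ follow by inserting the frame $\mathcal{F}_{\SEA}$ into the defining formulas \eqref{eqfun}, \eqref{eqde} and \eqref{eqcu}, exactly as indicated in Section~\ref{sc:B}. The content of Theorem~\ref{DAMC} is that the Maurer-Cartan form $\mathcal{F}_{\SEA}^{-1}\d\mathcal{F}_{\SEA}$ has the stated shape \eqref{DAMCUV}; its flatness $\d\alpha_{\SEA} + \alpha_{\SEA}\wedge\alpha_{\SEA}=0$ is automatic because $\mathcal{F}_{\SEA}$ is a genuine frame, and comparing the $\d z\wedge\d\bar z$-coefficient of this identity entry by entry yields precisely the Gauss equation \eqref{eq:FundaeqDA} and the Codazzi equation \eqref{eq:FundaeqDA2}.

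\emph{Converse.} Given $\omega^{\SEA}, Q^{\SEA}$ with $H$ defined by \eqref{eq:FundaeqDA}, I would define $\mathcal{U}_{\SEA},\mathcal{V}_{\SEA}$ by \eqref{DAMCUV} and set $\alpha_{\SEA}=\mathcal{U}_{\SEA}\d z + \mathcal{V}_{\SEA}\d\bar z$. A direct check gives $\tr\mathcal{U}_{\SEA}=\tr\mathcal{V}_{\SEA}=0$, so $\alpha_{\SEA}$ is $\sl$-valued, and the hypothesis that $H$ satisfies \eqref{eq:FundaeqDA2} is, by the computation of the direct part read backwards, exactly the flatness of $\alpha_{\SEA}$. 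Since $\D$ is simply connected, the equation $\mathcal{F}_{\SEA}^{-1}\d\mathcal{F}_{\SEA}=\alpha_{\SEA}$ integrates to a map $\mathcal{F}_{\SEA}:\D\to\SL$, unique once its value at a base point $p_0$ is fixed. Because $\alpha_{\SEA}$ is trace-free, $\det\mathcal{F}_{\SEA}$ is constant, and I would normalize the initial condition so that $\det\mathcal{F}_{\SEA}\equiv i$.

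\emph{Reality and reconstruction.} The hard part will be to arrange that the integrated frame describes a \emph{real} surface in $\R^3$ rather than a complex one. Writing $J=\bpm 0&1&0\\ 1&0&0\\ 0&0&1\epm$, the columns $v_1,v_2,v_3$ of the geometric frame obey $\overline{v_1}=v_2$ and $\overline{v_3}=v_3$, i.e. $\overline{\mathcal{F}_{\SEA}}=\mathcal{F}_{\SEA}J$. One verifies directly from \eqref{DAMCUV} the matching symmetry $\overline{\mathcal{U}_{\SEA}}=J\,\mathcal{V}_{\SEA}\,J$ of the connection, so that $\overline{\mathcal{F}_{\SEA}}$ and $\mathcal{F}_{\SEA}J$ solve the same linear system; choosing the base-point value with $\overline{\mathcal{F}_{\SEA}(p_0)}=\mathcal{F}_{\SEA}(p_0)J$ (compatible with $\det\mathcal{F}_{\SEA}=i$ since $\det J=-1$), uniqueness of solutions forces $\overline{\mathcal{F}_{\SEA}}=\mathcal{F}_{\SEA}J$ throughout $\D$. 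I would then set $\d f^{\SEA}=e^{\omega^{\SEAt}/2}(\mathcal{F}_{\SEA}e_1\,\d z+\mathcal{F}_{\SEA}e_2\,\d\bar z)$; using $\mathcal{V}_{\SEA}e_1=(-\tfrac12\omega^{\SEA}_{\bar z},0,e^{\omega^{\SEAt}/2})^T$ and $\mathcal{U}_{\SEA}e_2=(0,-\tfrac12\omega^{\SEA}_z,e^{\omega^{\SEAt}/2})^T$ one checks that both $\partial_{\bar z}(e^{\omega^{\SEAt}/2}\mathcal{F}_{\SEA}e_1)$ and $\partial_z(e^{\omega^{\SEAt}/2}\mathcal{F}_{\SEA}e_2)$ equal $e^{\omega^{\SEAt}}\mathcal{F}_{\SEA}e_3$, so this one-form is closed; it is $\R^3$-valued thanks to $\overline{v_1}=v_2$, and hence integrates on the simply connected $\D$ to a map $f^{\SEA}:\D\to\R^3$, unique up to an additive constant.

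Finally I would read off $f^{\SEA}_z=e^{\omega^{\SEAt}/2}v_1$, $f^{\SEA}_{\bar z}=e^{\omega^{\SEAt}/2}v_2$ and $f^{\SEA}_{z\bar z}=e^{\omega^{\SEAt}}v_3$, and substitute these into \eqref{eqconf} and into the relation $f^{\SEA}_{zz}=\omega^{\SEA}_z f^{\SEA}_z+Q^{\SEA}e^{-\omega^{\SEAt}}f^{\SEA}_{\bar z}$ defining $Q^{\SEA}$: this recovers the affine-conformality conditions and shows that the induced affine metric and cubic form of $f^{\SEA}$ are the prescribed $g$ and $C^{\SEA}$, i.e. it is the direct part applied in reverse. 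For uniqueness, two surfaces with the same $g$ and $C^{\SEA}$ give frames satisfying the same Maurer-Cartan equation with the same reality normalization, hence differing by a constant left factor $A\in\SLR$, while the integration constant contributes a translation $b\in\R^3$; thus the surface is determined up to $f^{\SEA}\mapsto A f^{\SEA}+b$, and conversely every such equi-affine motion preserves all the data. The one genuinely nontrivial step is the reality argument, since it is what selects the real form $\SLR$ inside $\SL$ and turns the abstract flat frame into an honest convex surface in $\R^3$.
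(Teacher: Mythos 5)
Your proof is correct and follows exactly the frame-integration approach the paper intends: the survey states this theorem without proof (deferring to the references \cite{DW1, DW2}), but the statement is already phrased through the flatness of $\alpha_{\SEA}$, and your argument --- integrate the flat connection on the simply connected $\D$, impose the reality condition $\overline{\mathcal{F}_{\SEA}}=\mathcal{F}_{\SEA}J$ via the verified symmetry $\overline{\mathcal{U}_{\SEA}}=J\,\mathcal{V}_{\SEA}\,J$ and a compatible initial value with $\det\mathcal{F}_{\SEA}=i$, then integrate the closed $\R^3$-valued one-form $e^{\omega^{\SEAt}/2}(v_1\,\d z+v_2\,\d \bar z)$ and read off \eqref{eqconf} and the defining relation for $Q^{\SEA}$ --- is precisely the standard way to fill it in. The reality step you single out as the essential one is indeed where the real form enters, consistent with the paper's later discussion of the real forms underlying the affine-sphere case.
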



\subsection{Definite affine spheres}
A \textit{definite affine sphere} is defined to be any affine surface with definite Blaschke metric having all affine normals meet at a common point which will be called its center, or where all affine normals are parallel. Equivalently an affine sphere is defined to be any ``umbilical'' affine surface (that is,
$S$ is a scalar function multiple of the identity everywhere).  

 By the matrix form \eqref{shape} of the shape operator $S$, 
 a definite affine sphere necessarily satisfies  $Q^{\SEA}_{\bar{z}} = 0 $, that is, $Q^{\SEA}$ is holomorphic. Then the above Codazzi equation \eqref{eq:FundaeqDA2} implies  $ H_{\bar{z}} = 0$, whence $H =$ const., since $H$ is real.
 
 \subsubsection{Types of affine spheres}
 So far we know that definite affine spheres have constant affine mean curvature $H$. Then a definite affine sphere is called \textit{elliptic, parabolic or hyperbolic}, when its affine mean curvature $H$ is positive, zero or negative respectively.
 
When $H=0$, it is also called ``improper''; and $\xi^{\SEA}$ is a constant vector which will usually be set to $(0,0,1)^t$ by some equi-affine transformation. Its center is at infinity. The only complete ones are paraboloids. 
 
When the shape operator $S$ in \eqref{shape} satisfies $S = H  \id \neq 0$, the corresponding affine sphere will be  called ``proper''. In this case we obtain 
 $\xi^{\SEA}=-H(f^{\SEA}-f^{\SEA}_0)$ with some $f^{\SEA}_0$ being the center of the affine sphere. For simplicity, we will always make $f^{\SEA}_0={\boldsymbol 0}$ 
 by translating the surface. 

\begin{Remark}
\mbox{}
\begin{enumerate}
\item Elliptic definite affine spheres have centers `inside' the surfaces and the only complete ones are ellipsoids. But the center of a hyperbolic definite affine sphere is `outside'.  They were considered in Calabi's conjecture for hyperbolic affine hyperspheres of any dimension (proved by Cheng-Yau \cite{CheYau86}, et al): 
Inside any regular convex cone $\mathcal{C}$, there is a unique properly embedded or complete 
 (with respect to the affine metric) hyperbolic affine sphere which has affine mean curvature $-1$, has the vertex  of $\mathcal{C}$ as its center, and is asymptotic to the boundary $\partial\mathcal{C}$. Conversely any properly embedded or complete hyperbolic affine sphere is asymptotic to the boundary of the cone $\mathcal{C}$ given by the convex hull of itself and its center.

\item 
It is 
clear that $Q  \d z^3$ is a globally defined holomorphic cubic differential (that is, in $H^0(M,K^3)$ where $K$ is the canonical bundle of $M$). Recall Pick's Theorem: $ C\equiv 0 $ if and only if $f(\D)$ is part of a quadric surface. So $Q$ is nonzero except for the quadrics. 

Near any point $z_0$ which is not any of the  isolated zeroes of $Q$ one could make a holomorphic coordinate change to normalize $Q$ to a nonzero constant, but we will not do that now, since then we have no control over the behaviour of $Q$ ``far away'' from $z_0$. The zeroes of $Q$ will be called ``planar'' points of the affine sphere.

\item We remark that the immersion is analytic for any definite affine sphere, since the defining equation is a fully nonlinear Monge-Ampere type elliptic PDE, see for example \cite[\S 76]{Blaschke}.
\end{enumerate}
\end{Remark}

It is easy to see that the Maurer-Cartan form 
\[
\alpha_{\SEA} = \mathcal F_{\SEA}^{-1} d \mathcal F_{\SEA} 
 =\mathcal{U}_{\SEA} \d z + \mathcal{V}_{\SEA} \d \bar{z}
\]
 of a definite affine sphere can be computed as
\begin{align}  \label{DASUV}
 \mathcal U_{\SEA} = \begin{pmatrix} 
   \frac{1}{2}\omega^{\SEA}_z    & 0 & -H e^{ \frac{1}{2}\omega^{\SEAt}} \\[0.1cm] 
  Q^{\SEA} e^{-\omega^{\SEA}} & - \frac{1}{2}\omega^{\SEA}_z  & 0  \\[0.1cm]
  0 & \! \! \! e^{ \frac{1}{2}\omega^{\SEAt}} & 0 
 \end{pmatrix}, \quad 
 \mathcal V_{\SEA} = 
 \begin{pmatrix} 
 -\frac{1}{2}\omega^{\SEA}_{\bar{z}} & \overline{ Q^{\SEA}}  e^{-\omega^{\SEAt} }  & 0 \\[0.1cm]
 0 & \frac{1}{2}\omega^{\SEA}_{\bar{z}} & -H e^{ \frac{1}{2}\omega^{\SEAt}} \\[0.1cm]
  e^{ \frac{1}{2}\omega^{\SEAt}}  & 0 & 0
\end{pmatrix} . 
\end{align}
In summary we obtain the governing equations for definite affine spheres in $\R^3$:
\begin{equation} \label{eqgc}
 \omega^{\SEA}_{z\bar{z}} + H e^{\omega^{\SEAt} } + |Q^{\SEA}|^2  e^{-2\omega^{\SEAt} } = 0, \quad Q^{\SEA}_{\bar{z}}  = 0. 
\end{equation}
 Moreover, given a holomorphic function $Q^{\SEA}$, 
 the first of the equations above is again a
 \textit{Tzitz\'eica equation}. 
\begin{Remark}
 The fundamental theorem in Theorem \ref{thm:fundDA} is 
 still true for a definite affine sphere into $\R^3$.
\end{Remark}


\subsection{A family of flat connections}
 From now on we will consider exclusively the case of proper definite affines spheres.  
 Then  we can and will scale the surface by a positive factor to normalize $H=\pm 1$.
 The following observation is crucial for the integrability 
 of definite affine spheres:
 The system \eqref{eqgc} is invariant under $Q^{\SEA} \rightarrow 
 \l^3 Q^{\SEA}$ for any $\l \in S^1$.
 Thus there 
 exists a one-parameter 
 family of solutions of \eqref{eqgc}
 parametrized by $\l \in S^1$; 
 The corresponding family 
 $\{\omega_{\SEA}^{\l},  C_{\SEA}^{\l}\}_{\l \in S^1}$
 then satisfies
\[
  \omega_{\SEA}^{\l} = \omega^{\SEA}, \quad C_{\SEA}^{\l} 
 = \l^{-3} Q^{\SEA} \d z^3 + \l^{3} \overline{Q^{\SEA}} \d \bar z^3.
\]
 As a consequence, there exists a one-parameter family of 
 definite affine spheres 
 $\{\hat f_{\SEA}^{\l}\}_{\l \in S^1}$ such that 
 $\hat f_{\SEA}^{\l}|_{\l=1} = f^{\SEA}$, which will be called the 
 \textit{associated family}.
 Let $\hat{\mathcal F}_{\SEA}^{\l}$ be the frame of $\hat f_{\SEA}^{\l}$.
 Then the Maurer-Cartan form $\hat{\alpha}_{\SEA}^{\lambda} = 
 (\hat{\mathcal F}_{\SEA}^{\l})^{-1} \d \hat{\mathcal F}_{\SEA}^{\l}
 = \hat{\mathcal U}_{\SEA}^{\l} 
 \d z +  \hat{\mathcal V}_{\SEA}^{\l} \d \bar z$
 can be computed as $\mathcal U_{\SEA}$ and $\mathcal V_{\SEA}$ in 
 \eqref{DASUV} replacing $Q^{\SEA}$ and $\overline{Q^{\SEA}}$ by 
 $\l^3 Q^{\SEA}$ and $\l^{-3}\overline{Q^{\SEA}}$, respectively.

 For the elliptic case (that is, $H=1$), applying the gauge 
 $G^{\l}=\di (i\lambda,i\lambda^{-1},1)$ to ${\hat \alpha}_{\SEA}^{\l}$, that is, 
\begin{equation}\label{eq:extEAP}
 F^{\l}_{\SEA + } := \hat {\mathcal F}_{\SEA}^{\l} G^{\l}_+
\end{equation}
 yields:
\begin{equation} \label{eqlae}
\alpha_{\SEA + }^{\lambda} =(F^{\l}_{\SEA + })^{-1} \d F^{\l}_{\SEA + } = 
 U_{\SEA + }^\l \d z + V_{\SEA + }^\l  \d\bar{z}
\end{equation}
where 
\begin{align} 
 U_{\SEA + }^\lambda  = \begin{pmatrix} 
    \frac{1}{2}\omega^{\SEA}_z   & 0 & i \lambda^{-1} e^{\frac{1}{2}\omega^{\SEAt}} \\ 
  \lambda^{-1} Q^{\SEA} e^{-\omega^{\SEAt} }& -\frac{1}{2}\omega^{\SEA}_z & 0  \\
  0 & i \lambda^{-1} e^{\frac{1}{2}\omega^{\SEAt} } & 0 \\
 \end{pmatrix}, \quad 
 V_{\SEA + }^\lambda =
 \begin{pmatrix} 
- \frac{1}{2} \omega^{\SEA}_{\bar{z}}  & \lambda \overline{ Q^{\SEA}}  e^{-\omega^{\SEAt} }  & 0 \\
 0 &  \frac{1}{2} \omega^{\SEA}_{\bar{z}}  &i \lambda e^{\frac{1}{2}\omega^{\SEAt}}  \\
 i \lambda e^{\omega^{\frac{1}{2}\SEAt}}  & 0 & 0
\end{pmatrix} . 
 \label{eq:UVe}
\end{align}

For the hyperbolic case (that is, $H=-1$), applying the gauge 
 $G_-^{\l} = \di (\lambda,\lambda^{-1},1)$ to $\hat \alpha_{\SEA}^{\l}$, that is, 
\begin{equation}\label{eq:extEAM}
 F^{\l}_{\SEA -} := \hat {\mathcal F}_{\SEA}^{\l} G^{\l}_-
\end{equation}
  yields: 
\begin{equation} \label{eqlah}
\alpha_{\SEA - }^{\lambda} =(F^{\l}_{\SEA - })^{-1} \d F^{\l}_{\SEA - } = 
U_{\SEA - }^\l \d z + V_{\SEA - }^\l  \d\bar{z}
\end{equation}
where 
\begin{align} 
 U_{\SEA - }^\lambda  = \begin{pmatrix} 
    \frac{1}{2}\omega^{\SEA}_z   & 0 & \lambda^{-1} e^{\frac{1}{2}\omega^{\SEAt}} \\ 
  \lambda^{-1} Q^{\SEA} e^{-\omega^{\SEAt} }& -\frac{1}{2}\omega^{\SEA}_z & 0  \\ 
  0 & \lambda^{-1} e^{\frac{1}{2}\omega^{\SEAt}} & 0 \\
 \end{pmatrix}, \quad 
 V_{\SEA -}^\lambda =
 \begin{pmatrix} 
- \frac{1}{2} \omega^{\SEA}_{\bar{z}}  & \lambda \overline{ Q^{\SEA}}  e^{-\omega^{\SEAt} }  & 0 \\
 0 &  \frac{1}{2} \omega^{\SEA}_{\bar{z}}  & \lambda e^{\frac{1}{2}\omega^{\SEAt} }  \\
  \lambda e^{\frac{1}{2}\omega^{\SEAt}}  & 0 & 0
\end{pmatrix}. 
 \label{eq:UVh}
\end{align}


 In both cases $\alpha_{\lambda}$ takes value in the  order $6$ twisted loop 
 algebra $\Lambda\sli_3 \C_\sigma$,  but it is contained in different real forms,
namely in the real forms induced by $\tau(X)=\ad(I_{2,1}P_0)\, \bar{X}$ for the hyperbolic case, and by 
 $\tau^\prime(X)=\ad (P_{0}) \, \bar{X} $ for the elliptic case. These two real forms are equivalent and both commute with $\sigma$, but, obviously, the associated geometries are very different.

\begin{Remark}
\mbox{}
\begin{enumerate}
\item 
Indeed definite affine spheres have two different geometries or elliptic PDE because there are two open cells in the corresponding Iwasawa decomposition, as explained in 
 \cite{DW1}: To simplify notation, denote this group of twisted loops $\Lambda\mathrm{SL}_3 \mathbb{C}_\sigma$ by $\cg$. Then $\cg_\tau$ and $\cg_+$ denote respectively the subgroups of $\tau$-real loops and the loops with holomorphic extension to the unit disc in $\C$. Iwasawa decomposition means the double coset decomposition $\cg_\tau \backslash \cg / \cg_+ $. The following observation makes it possible to have a unified treatment of elliptic and hyperbolic definite affine spheres.  Let $s_0:=\di (\lambda,-\lambda^{-1},-1)P_0$. There are exactly two open $\tau_2$-Iwasawa cells $\cg_{\tau_2} \cg_+$ and $\cg_{\tau_2} s_0 \cg_+$, which are essentially the same as two open $\tau_2^\prime$-Iwasawa cells (but interchanged):
\[
\cg_{\tau_2} s_0 \cg_+ = s_0 (\cg_{\tau_2^\prime} \cg_+), \qquad \cg_{\tau_2} \cg_+ = s_0 (\cg_{\tau_2^\prime} s_0 \cg_+) . 
\]

\item We may conjugate the complex frame to a real $\mathrm{SL}_3 \R$-frame:
\[
\mathcal{F}^{\R} := \ad \begin{pmatrix} \frac{1}{\sqrt{2}} & \frac{1}{\sqrt{2}} & 0 \\ \frac{\mathrm{i}}{\sqrt{2}} & \frac{-\mathrm{i}}{\sqrt{2}}& 0 \\ 0 & 0 & \sqrt{\mp 1} \end{pmatrix} \cdot  \mathcal{F}_\lambda.
\]
In fact $\mathcal{F}^{\R}=(e_{1},e_{2},\xi)$ with $\{e_{1},e_{2}\}$ being simply an orthonormal tangent frame with respect to  the affine metric. 
Recall that we obtain the immersion 
$f^{\SEA}=-\frac{1}{H} \xi^{\SEA}$ from the last column. It is clear now that we may also simply take the real part of the last column of $F_{\SEA \pm}^\lambda$ to get an equivalent affine sphere modulo affine motions.
\end{enumerate}
\end{Remark}

We now define the two subgroups $\widetilde \SLR^{\pm} \subset \SL$ 
\[
 \widetilde \SLR^{\pm} = \left\{ A \in \SL \;\Big|\;
\ad \begin{pmatrix} \frac{1}{\sqrt{2}} & \frac{1}{\sqrt{2}} & 0 \\ \frac{i}{\sqrt{2}} & \frac{-i}{\sqrt{2}}& 0 \\ 0 & 0 & \sqrt{\mp 1} \end{pmatrix}
\cdot  
 A \in \SLR
\right\}.
\]
It is easy to verify that both groups are isomorphic to  $\SLR$

It is remarkable that a simple  condition characterizes the extended frames of proper definite affine spheres:

\begin{Theorem}[\cite{DW1}] \label{thm:flatconnectionsDA}
 Let $f^{\SEA} : \D \to \R^3$ be a definite affine sphere
 in $\R^3$ and let $\alpha_{\SEA \pm}^{\l}$ be the family of Maurer-Cartan 
 forms defined in \eqref{eqlae} or \eqref{eqlah}.
 Then $\d + \alpha_{\SEA \pm}^{\lambda}$ gives a family of flat connections
 on $\D \times \widetilde \SLR^{\pm}$.

 Conversely, given a family of connections  $\d  
 + \alpha_{\SEA \pm}^{\lambda}$ 
 on $\D \times \widetilde \SLR^{\pm}$, where $\alpha_{\SEA \pm}^{\lambda}$ 
 is as in \eqref{eqlae} or \eqref{eqlah}, then  
 $\d + \alpha_{\SEA \pm}^{\lambda}$ belongs 
 to an associated 
 family of affine spheres into $\R^3$ 
 if and only if the connection is flat for all 
 $\lambda \in S^1$.
\end{Theorem}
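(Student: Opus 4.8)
The plan is to prove both directions by reducing flatness of $\d+\alpha_{\SEA\pm}^{\l}$ to the zero--curvature equation and then analysing its dependence on the loop parameter $\l$. Flatness of $\d+\alpha_{\SEA\pm}^{\l}$, with $\alpha_{\SEA\pm}^{\l}=U_{\SEA\pm}^{\l}\,\d z+V_{\SEA\pm}^{\l}\,\d\bar z$, is by definition the vanishing of the curvature,
\[
 \partial_{\bar z}U_{\SEA\pm}^{\l}-\partial_z V_{\SEA\pm}^{\l}
 +[\,U_{\SEA\pm}^{\l},V_{\SEA\pm}^{\l}\,]=0 .
\]
Reading off \eqref{eq:UVe} and \eqref{eq:UVh}, the diagonal parts of $U_{\SEA\pm}^{\l}$ and $V_{\SEA\pm}^{\l}$ are $\l$--independent while the nilpotent off--diagonal parts carry $\l^{-1}$ in $U_{\SEA\pm}^{\l}$ and $\l^{+1}$ in $V_{\SEA\pm}^{\l}$. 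Hence the curvature is a Laurent polynomial in $\l$ supported in degrees $-1,0,+1$, and because $S^1$ is infinite, flatness for all $\l\in S^1$ is equivalent to the separate vanishing of all three coefficients.

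First I would treat the converse by carrying out this expansion. I expect the coefficients of $\l^{-1}$ and $\l^{+1}$ to reduce to $Q^{\SEA}_{\bar z}=0$ and its conjugate, and the coefficient of $\l^{0}$ to reproduce the Gauss equation $\omega^{\SEA}_{z\bar z}+He^{\omega^{\SEAt}}+|Q^{\SEA}|^{2}e^{-2\omega^{\SEAt}}=0$; together these are exactly \eqref{eqgc}. Hence flatness for all $\l\in S^1$ is equivalent to \eqref{eqgc}, and then the fundamental theorem, Theorem \ref{thm:fundDA}, produces from $(\omega^{\SEA},Q^{\SEA})$ a proper definite affine sphere, unique up to affine motion, of which the given family is the associated family. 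The forward direction is the same computation in reverse: a proper definite affine sphere has constant $H=\pm1$ and satisfies \eqref{eqgc} by Theorem \ref{DAMC}, and since \eqref{eqgc} is invariant under $Q^{\SEA}\mapsto\l^{3}Q^{\SEA}$ for $\l\in S^1$, the $1$--form $\hat\alpha_{\SEA}^{\l}$ built from $(\omega^{\SEA},\l^{3}Q^{\SEA})$ through \eqref{DASUV} is flat for every such $\l$. Because $G^{\l}_{\pm}$ is constant in $z,\bar z$, conjugation by it preserves flatness and turns $\hat\alpha_{\SEA}^{\l}$ into \eqref{eqlae} and \eqref{eqlah}.

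It then remains to justify the phrase ``on $\D\times\widetilde\SLR^{\pm}$'', that is, that $\alpha_{\SEA\pm}^{\l}$ is valued in $\Lie(\widetilde\SLR^{\pm})$. Writing $T$ for the conjugating matrix occurring in the definition of $\widetilde\SLR^{\pm}$ and setting $N:=T^{-1}\bar T$, a short computation gives $N=\r{P}$ in the elliptic case and $N=\di(1,1,-1)\r{P}$ in the hyperbolic case, so that $A\in\widetilde\SLR^{\pm}$ is equivalent to $A=\ad(N)\,\bar A$; these are precisely the real forms $\tau'$ (elliptic) and $\tau$ (hyperbolic). Since $\alpha_{\SEA\pm}^{\l}$ is a genuine real $1$--form on $\D$, separating the $\d z$ and $\d\bar z$ parts shows that landing in this real form is equivalent to $\ad(N)\,\overline{U_{\SEA\pm}^{\l}}=V_{\SEA\pm}^{\l}$ for $\l\in S^1$, which I would verify directly from \eqref{eq:UVe} and \eqref{eq:UVh} using $\bar\l=\l^{-1}$ and $\overline{\omega^{\SEA}_z}=\omega^{\SEA}_{\bar z}$ on $S^1$. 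Integrating $\d F^{\l}=F^{\l}\alpha_{\SEA\pm}^{\l}$ from an initial value in $\widetilde\SLR^{\pm}$ then keeps the frame in $\widetilde\SLR^{\pm}$, which is isomorphic to $\SLR$, so the flat connection indeed lives on $\D\times\widetilde\SLR^{\pm}$.

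The hard part will be the real--form bookkeeping rather than any single computation: one must track how the gauge $G^{\l}_{\pm}$ and the fixed conjugator $T$ interact with complex conjugation and with the substitution $\l\mapsto 1/\bar\l$, and it is exactly here --- through the corner entry $\sqrt{\mp 1}$ of $T$ --- that the elliptic and hyperbolic cases split into the two real forms $\tau'$ and $\tau$, which are abstractly equivalent yet carry the very different geometries. By contrast, once the degree count in $\l$ is in place, the expansion of the curvature into the three equations of \eqref{eqgc} is routine.
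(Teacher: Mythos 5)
Your proposal is correct, and its core --- expanding the curvature of $\d+\alpha_{\SEA\pm}^{\l}$ as a Laurent polynomial in $\l$ supported in degrees $-1,0,1$ and identifying the vanishing of the three coefficients with \eqref{eqgc} --- is exactly what the paper means by ``the rest follows from the equations of flatness.'' The difference lies in how the converse is set up. You take the shape \eqref{eqlae}/\eqref{eqlah} of $\alpha_{\SEA\pm}^{\l}$ as given (which is what the theorem literally states), extract \eqref{eqgc} from the $\l$-expansion, and then reconstruct the surface by invoking the fundamental theorem, Theorem \ref{thm:fundDA}, checking via \eqref{shape} that $Q^{\SEA}_{\bar z}=0$ and constant $H=\pm1$ force the umbilicity. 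The paper instead starts one step earlier: it only assumes a frame satisfying the $\sigma$- and $\tau$-reality conditions, deduces from twisting that $\mathcal F^{-1}\mathcal F_z=A\l^{-1}+B$ with $A\in\fg_{-1}$, $B\in\fg_0$, and then gauges by diagonal fixed points $\di(e^{i\beta},e^{-i\beta},1)$ of both $\sigma$ and $\tau$ to normalize $A_{13}$ to a positive function $e^{\psi/2}$, thereby \emph{deriving} the normal form \eqref{eqlah} rather than assuming it. That extra normalization is what one actually needs when the frame comes out of an Iwasawa factorization and is only determined up to diagonal gauge; your route is cleaner for the statement as written and makes the logical dependence on Theorem \ref{thm:fundDA} explicit, which the paper leaves implicit. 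Your real-form bookkeeping ($N=T^{-1}\bar T$ equal to $\r{P}$ resp.\ $\di(1,1,-1)\r{P}$, and the criterion $V_{\SEA\pm}^{\l}=\ad(N)\overline{U_{\SEA\pm}^{\l}}$ on $S^1$) checks out against \eqref{eq:UVe} and \eqref{eq:UVh}. Two cosmetic caveats: your curvature formula has the bracket with the opposite sign to the convention $\partial_{\bar z}U-\partial_z V-[U,V]=0$ forced by $\alpha=\mathcal F^{-1}\d\mathcal F$ (immaterial for the degree count, but the individual coefficients only cancel correctly with the right sign), and the substitution producing \eqref{eq:UVh} after the gauge $G^{\l}=\di(\l,\l^{-1},1)$ is $Q^{\SEA}\mapsto\l^{-3}Q^{\SEA}$, not $\l^{3}Q^{\SEA}$ --- you inherited a typo from the paper's surrounding text; the displayed family $C_{\SEA}^{\l}=\l^{-3}Q^{\SEA}\d z^3+\l^{3}\overline{Q^{\SEA}}\d\bar z^3$ is the consistent one.
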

\begin{proof} We have discussed the first part of the theorem above.
Concerning the converse direction we only show,
for simplicity, the hyperbolic (that is $H=-1$  in the flat connection \eqref{eqlah}). 
 The positive case is completely parallel.

 The reality conditions for $\sigma$ and $\tau_2$ guarantee that $\mathcal{F}^{-1}\mathcal{F}_{\bar{z}}$
 is affine  in $\lambda$. So we have
\begin{equation} \label{eqloop}
\mathcal{F}^{-1}\mathcal{F}_{z} = A \lambda^{-1} + B, \qquad \mathcal{F}^{-1}\mathcal{F}_{\bar{z}} = C \lambda + D,
\end{equation}
with $A \in \fg_{-1}$, $B \in \fg_0$, $C=\tau(A)$, and $D=\tau(B)$. The fixed points of both $\sigma$ and $\tau$ are of the form $\di (e^{\mathrm{i}\beta}, e^{-\mathrm{i}\beta}, 1 )$. Gauging by them respects the reality conditions. Let $e^{\mathrm{i}\beta}=\pm \frac{A_{13}}{|A_{13}|}$. Use it to scale  
 $A_{13}$ to a real positive function which 
 then is written in the form $e^{\psi/2}$.  The rest follows from the equations of flatness.
\end{proof}
\begin{Remark}
Recall the classical Tzitz\'eica equation for proper indefinite affine spheres (with no planar points):
\begin{equation} \label{eqtz}
\omega _{xy}=e^{\omega }-e^{-2\omega }
\end{equation}
We observe that the equation \eqref{eqgc} for hyperbolic definite affine spheres is the elliptic version of the above when $H=-1$ and $Q=1$. Both admit the trivial solution $\omega  \equiv 0$, and the corresponding surfaces are $x_1 x_2 x_3=1$ and $(x_1^2+x_2^2)x_3=1$ respectively. However, the equation \eqref{eqgc} for elliptic definite affine spheres admits no constant real solution, and some elliptic function examples will be given in 
 \cite{DW1}. 
\end{Remark}

 
\section{Indefinite proper Affine spheres}\label{sc:affine}
 In this section, we discuss a loop group formulation of 
 indefinite proper affine spheres.
 The detailed discussion can be found in \cite{DE}.
\subsection{Blaschke immersions and its Maurer-Cartan}
 Let $f^{\SIA}: \D \to \R^3$ be a Blaschke immersion, that is, 
 there exists a unique \textit{affine normal} field $\xi^{\SIA}$
 (up to sign) such that the volume element of the affine metric 
 $\d s^2 = g$
 (which is determined by the second derivative of $f^{\SIA}$ and commonly called the 
 \textit{Blaschke metric})
 and the induced volume element on $\D \subset \R^2$ coincide, that is, 
 \begin{equation}\label{eq:volume}
  \det [f^{\SIA}_u, f^{\SIA}_v, \xi^{\SIA}]^2 = |g_{11} g_{22}-(g_{12})^2|
 \end{equation}
 holds.  
 In the following we assume that 
 the Blaschke metric $\d s^2 = g$ is indefinite.
 Then there exist null coordinates $(u, v) \in \D$ 
 \cite{TWeinstein} or 
 \cite[Prop 14. 1. 18]{B-Wood}, that is,
\begin{equation} \label{metrics}
 \d s^2 =  2 e^{\omega^{\SIA}} \d u \d v
\end{equation}
 holds for some real valued function $\omega^{\SIA} : \D \to \R$.
 Then the affine normal $\xi^{\SIA}$ can be represented as
\begin{equation}\label{eq:affienormal}
 \xi^{\SIA} = \frac{1}{2}\Delta f^{\SIA} = e^{- \omega^{\SIAt}}f^{\SIA}_{uv},
\end{equation}
 where $\Delta$ denotes Laplacian of the indefinite Blaschke metric.
 Combining \eqref{eq:volume} with \eqref{metrics}, 
 we have 
\[
\d s^2 = 2 \det [f^{\SIA}_u, f^{\SIA}_v, f^{\SIA}_{uv}] \;\d u \d v.
\]

 Note that the null coordinates can be rephrased as follows:
\begin{equation}\label{eq:affinemull}
\det [f^{\SIA}_{u} \;\;f^{\SIA}_{v} \;\; f^{\SIA}_{uu} ] \  = \  0 
 = \det [ f^{\SIA}_{u} \;\;f^{\SIA}_{v} \;\; f^{\SIA}_{vv}],  
 \qquad
 \det [f^{\SIA}_{u} \;\; f^{\SIA}_{v} \; \;f^{\SIA}_{u v } ] \  = \ 
e^{2 \omega^{\SIAt}},
\end{equation}
 see \eqref{metrics}.
 Moreover, we can introduce two functions 
\begin{equation}\label{eq:QandRIA}
 (Q^{\SIA})^2  = \det [f^{\SIA}_u, f^{\SIA}_{uu}, f_{uuu}^{\SIA}], 
 \quad \mbox{and} \quad - (R^{\SIA})^2 = \det [f^{\SIA}_v, f^{\SIA}_{vv},  f^{\SIA}_{vvv}].
\end{equation}
 From the definition of $Q^{\SIA}$ and $R^{\SIA}$ 
 in \eqref{eq:QandRIA}, it is clear that 
 \begin{equation} \label{cu2}
  C^{\SIA}(u, v)  = Q^{\SIA}(u, v) \d u^3 + R^{\SIA}(u, v) \d v^3
 \end{equation}
 is a cubic differential for the null Blaschke immersion $f^{\SIA}$.
 The shape operator $S=[s_{ij}]$, 
 which is defined by the Weingarten formula, 
 has relative to the basis $\{\partial_u, \partial_v\}$, where $u$ and $v$ are
 null coordinates, the special form: 
\[
 S = 
\begin{pmatrix} H & - e^{-2 \omega^{\SIAt}} Q^{\SIA}_v  \\ - e^{-2 \omega^{\SIAt}} R^{\SIA}_u &  H \end{pmatrix}.
\]
 Here $H \in \R$ is the affine mean curvature of $f^{\SIA}$.
 Then the coordinate frame of $f^{\SIA}$ is defined by
 \begin{equation} \label{coofr2}
\mathcal F_{\SIA}= (e^{-\frac{1}{2}\omega^{\SIAt}}f^{\SIA}_u, 
 e^{-\frac{1}{2}\omega^{\SIAt}} f^{\SIA}_v,  
 \xi^{\SIA} = e^{-\omega^{\SIA}} f_{uv}^{\SIA}),
 \end{equation}
 
and from \eqref{eq:volume}, it is easy to see that 
$\mathcal F_{\SIA}$ takes values in $\SLR$.
 Moreover, a straightforward computation shows that the following 
 lemma holds.
\begin{Lemma}
 The Maurer-Cartan form 
 \begin{equation} \label{eq:MCIA}
\alpha_{\SIA} =  \mathcal F_{\SIA}^{-1} \d \mathcal F_{\SIA} = \mathcal F_{\SIA}^{-1} 
 ({\mathcal F}_{\SIA})_u \d u  
 +  \mathcal F_{\SIA}^{-1} ({\mathcal F_{\SIA}})_v \d v 
 =  \mathcal U_{\SIA}  \d u  +  \mathcal V_{\SIA} \d v 
 \end{equation}
 can be computed as 
\begin{align}\label{eq:UVIA}
 \mathcal U_{\SIA} = \begin{pmatrix} 
 \frac{1}{2}\omega^{\SIA}_u   & 0 & -H e^{\frac{1}{2}\omega^{\SIAt}}\\
  Q^{\SIA}e^{-\omega^{\SIAt}} & -\frac{1}{2}\omega^{\SIA}_u & 
 e^{-\frac{3}{2} \omega^{\SIAt}} Q^{\SIA}_v \\
 0 & e^{\frac{1}{2}\omega^{\SIAt}}& 0
 \end{pmatrix}, \quad 
 \mathcal V_{\SIA} =
 \begin{pmatrix} 
 - \frac{1}{2}\omega^{\SIA}_v
 & R^{\SIA} e^{-\omega^{\SIAt}}  & e^{- \frac{3}{2} \omega^{\SIAt}} R^{\SIA}_u  \\
 0 & \frac{1}{2}\omega^{\SIA}_v & -H  e^{\frac{1}{2}\omega^{\SIAt}}\\
 e^{\frac{1}{2}\omega^{\SIAt}}& 0 & 0
\end{pmatrix}, 
\end{align}
\end{Lemma}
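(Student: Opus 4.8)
The plan is to differentiate the three columns of $\mathcal F_{\SIA}=(E_1,E_2,E_3)$, where $E_1=e^{-\frac12\omega^{\SIAt}}f^{\SIA}_u$, $E_2=e^{-\frac12\omega^{\SIAt}}f^{\SIA}_v$ and $E_3=\xi^{\SIA}=e^{-\omega^{\SIAt}}f^{\SIA}_{uv}$, and to expand each derivative back in the frame. By the definition of $\alpha_{\SIA}=\mathcal F_{\SIA}^{-1}\d\mathcal F_{\SIA}=\mathcal U_{\SIA}\,\d u+\mathcal V_{\SIA}\,\d v$ in \eqref{eq:MCIA}, the $j$-th column of $\mathcal U_{\SIA}$ (resp.\ $\mathcal V_{\SIA}$) is precisely the coordinate vector of $\partial_u E_j$ (resp.\ $\partial_v E_j$) in $\{E_1,E_2,E_3\}$. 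Thus the whole computation reduces to two families of structure equations: a second-order ``Gauss'' system expressing $f^{\SIA}_{uu},f^{\SIA}_{uv},f^{\SIA}_{vv}$ in the frame, and the derivatives $\xi^{\SIA}_u,\xi^{\SIA}_v$ of the affine normal.

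First I would establish the Gauss system. The null relations \eqref{eq:affinemull} force $f^{\SIA}_{uu}$ and $f^{\SIA}_{vv}$ to be tangential, so $f^{\SIA}_{uu}=a\,f^{\SIA}_u+b\,f^{\SIA}_v$ for some functions $a,b$, while $f^{\SIA}_{uv}=e^{\omega^{\SIAt}}\xi^{\SIA}$ is just the definition \eqref{eq:affienormal}. To pin the coefficients down I would use determinant identities rather than an inner product. Differentiating the normalization $\det[f^{\SIA}_u,f^{\SIA}_v,f^{\SIA}_{uv}]=e^{2\omega^{\SIAt}}$ of \eqref{eq:affinemull} in $u$ and discarding the determinants with a repeated column (using that $f^{\SIA}_{vv}$ is tangential) gives $2a\,e^{2\omega^{\SIAt}}=2\omega^{\SIA}_u e^{2\omega^{\SIAt}}$, hence $a=\omega^{\SIA}_u$; substituting $f^{\SIA}_{uu}=a f^{\SIA}_u+b f^{\SIA}_v$ into the defining equation $(Q^{\SIA})^2=\det[f^{\SIA}_u,f^{\SIA}_{uu},f^{\SIA}_{uuu}]$ of \eqref{eq:QandRIA} and again dropping parallel columns yields $b^2e^{2\omega^{\SIAt}}=(Q^{\SIA})^2$, so $b=Q^{\SIA}e^{-\omega^{\SIAt}}$ once the sign of $Q^{\SIA}$ is fixed. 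Thus $f^{\SIA}_{uu}=\omega^{\SIA}_u f^{\SIA}_u+Q^{\SIA}e^{-\omega^{\SIAt}}f^{\SIA}_v$, and the symmetric computation gives $f^{\SIA}_{vv}=\omega^{\SIA}_v f^{\SIA}_v+R^{\SIA}e^{-\omega^{\SIAt}}f^{\SIA}_u$, where the minus sign in $-(R^{\SIA})^2=\det[f^{\SIA}_v,f^{\SIA}_{vv},f^{\SIA}_{vvv}]$ is exactly the one forced by the orientation $\det[f^{\SIA}_u,f^{\SIA}_v,\xi^{\SIA}]=e^{\omega^{\SIAt}}$.

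For the normal derivatives I would differentiate $\xi^{\SIA}=e^{-\omega^{\SIAt}}f^{\SIA}_{uv}$ directly and feed in the Gauss system: writing $f^{\SIA}_{uuv}=\partial_v f^{\SIA}_{uu}$ and substituting the expressions for $f^{\SIA}_{uu},f^{\SIA}_{vv},f^{\SIA}_{uv}$, the terms proportional to $\xi^{\SIA}$ cancel and one is left with $\xi^{\SIA}_u=e^{-\omega^{\SIAt}}\!\left(\omega^{\SIA}_{uv}+Q^{\SIA}R^{\SIA}e^{-2\omega^{\SIAt}}\right)f^{\SIA}_u+Q^{\SIA}_v e^{-2\omega^{\SIAt}}f^{\SIA}_v$. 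Identifying the coefficient of $f^{\SIA}_u$ with $-H$ through the Weingarten relation $\xi^{\SIA}_u=-f^{\SIA}_{\ast}(S\partial_u)$ with the shape operator $S$ recorded above (this simultaneously recovers the affine Gauss equation $H=-e^{-\omega^{\SIAt}}\omega^{\SIA}_{uv}-Q^{\SIA}R^{\SIA}e^{-3\omega^{\SIAt}}$) gives $\xi^{\SIA}_u=-H f^{\SIA}_u+Q^{\SIA}_v e^{-2\omega^{\SIAt}}f^{\SIA}_v$, and symmetrically $\xi^{\SIA}_v=R^{\SIA}_u e^{-2\omega^{\SIAt}}f^{\SIA}_u-H f^{\SIA}_v$. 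Finally I would substitute all of this into $\partial_u E_j,\partial_v E_j$, re-express $f^{\SIA}_u=e^{\frac12\omega^{\SIAt}}E_1$, $f^{\SIA}_v=e^{\frac12\omega^{\SIAt}}E_2$, $\xi^{\SIA}=E_3$, and collect coefficients; this produces exactly \eqref{eq:UVIA}. A quick sanity check is $\tr\mathcal U_{\SIA}=\tr\mathcal V_{\SIA}=0$, consistent with $\mathcal F_{\SIA}$ taking values in $\SLR$. The only genuinely delicate part, everything else being bookkeeping, is the consistent tracking of orientation and sign conventions — the choice $\det[f^{\SIA}_u,f^{\SIA}_v,\xi^{\SIA}]=+e^{\omega^{\SIAt}}$, the minus sign defining $R^{\SIA}$, and the index convention of $S$ — which is precisely what decides that $Q^{\SIA}_v$ appears in $\mathcal U_{\SIA}$ and $R^{\SIA}_u$ in $\mathcal V_{\SIA}$ rather than the reverse.
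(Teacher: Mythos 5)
Your proposal is correct and is exactly the ``straightforward computation'' the paper invokes without writing out: derive the Gauss system $f^{\SIA}_{uu}=\omega^{\SIA}_u f^{\SIA}_u+Q^{\SIA}e^{-\omega^{\SIAt}}f^{\SIA}_v$, $f^{\SIA}_{vv}=\omega^{\SIA}_v f^{\SIA}_v+R^{\SIA}e^{-\omega^{\SIAt}}f^{\SIA}_u$ and the Weingarten relations from the determinant normalizations, then read off the columns of $\mathcal U_{\SIA},\mathcal V_{\SIA}$ as the frame coordinates of $\partial_u E_j,\partial_v E_j$. All the coefficients, including the sign conventions for $R^{\SIA}$ and the placement of $Q^{\SIA}_v$ and $R^{\SIA}_u$, check out against \eqref{eq:UVIA}.
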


\begin{Corollary} \label{compSIA}
 The compatibility conditions for the system of equations stated just above are
\begin{gather}
\omega^{\SIA}_{uv} + H e^{\omega^{\SIAt}} +e^{- 2 \omega^{\SIAt}} 
 Q^{\SIA}R^{\SIA} =0, \\
e^{3 \omega^{\SIAt}} H_u = Q^{\SIA} R^{\SIA}_u -  
e^{2 \omega^{\SIAt}} (Q^{\SIA}_v e^{-  \omega^{\SIAt}} )_v, 
 \quad
e^{3 \omega^{\SIAt}} H_v = Q^{\SIA}_v R^{\SIA} -  
e^{2 \omega^{\SIAt}} (R^{\SIA}_u e^{-  \omega^{\SIAt}} )_u.
\end{gather}
\end{Corollary}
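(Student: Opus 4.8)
The governing relations are precisely the components of the flatness (Maurer--Cartan) equation for $\alpha_{\SIA}$, so the plan is to impose this condition on the explicit pair \eqref{eq:UVIA} and read off the three equations componentwise. Writing the frame equations implied by \eqref{eq:MCIA} as $(\mathcal{F}_{\SIA})_u = \mathcal{F}_{\SIA}\,\mathcal{U}_{\SIA}$ and $(\mathcal{F}_{\SIA})_v = \mathcal{F}_{\SIA}\,\mathcal{V}_{\SIA}$, equality of the mixed partials $(\mathcal{F}_{\SIA})_{uv} = (\mathcal{F}_{\SIA})_{vu}$ is equivalent to
\[
 \partial_v \mathcal{U}_{\SIA} - \partial_u \mathcal{V}_{\SIA} = [\mathcal{U}_{\SIA}, \mathcal{V}_{\SIA}].
\]
Abbreviating $\mathcal{U} := \mathcal{U}_{\SIA}$ and $\mathcal{V} := \mathcal{V}_{\SIA}$, I would evaluate both sides of this $3\times 3$ identity entrywise, retaining the corner abbreviation $e^{\frac{1}{2}\omega^{\SIAt}}$ so that the recurring products $e^{\frac{1}{2}\omega^{\SIAt}}\cdot e^{-\frac{3}{2}\omega^{\SIAt}} = e^{-\omega^{\SIAt}}$ and $(e^{\frac{1}{2}\omega^{\SIAt}})^2 = e^{\omega^{\SIAt}}$ stay transparent.

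The Gauss-type equation (the first displayed relation) comes from the $(1,1)$ entry. There the diagonal terms of the left-hand side collapse to $\partial_v(\tfrac{1}{2}\omega^{\SIA}_u) + \partial_u(\tfrac{1}{2}\omega^{\SIA}_v) = \omega^{\SIA}_{uv}$, while the only surviving commutator contributions are $\mathcal{U}_{13}\mathcal{V}_{31} - \mathcal{V}_{12}\mathcal{U}_{21} = -He^{\omega^{\SIAt}} - Q^{\SIA}R^{\SIA}e^{-2\omega^{\SIAt}}$; equating the two gives $\omega^{\SIA}_{uv} + He^{\omega^{\SIAt}} + Q^{\SIA}R^{\SIA}e^{-2\omega^{\SIAt}} = 0$. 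The $(2,2)$ entry reproduces the same equation, and the $(3,3)$ entry is the trivial identity $0=0$ (both sides vanish), consistent with $\alpha_{\SIA}$ being $\mathfrak{sl}_3 \R$-valued.

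The two Codazzi-type equations come from the entries carrying $H$ together with the derivatives $Q^{\SIA}_v$, $R^{\SIA}_u$, namely $(2,3)$ and $(1,3)$. In the $(2,3)$ entry, differentiating $\mathcal{U}_{23} = e^{-\frac{3}{2}\omega^{\SIAt}}Q^{\SIA}_v$ in $v$ and $\mathcal{V}_{23} = -He^{\frac{1}{2}\omega^{\SIAt}}$ in $u$ produces the $H_u$, $Q^{\SIA}_{vv}$ and $\omega^{\SIA}_v Q^{\SIA}_v$ terms; the terms proportional to $H\omega^{\SIA}_u$ cancel against the commutator, and after multiplying through by $e^{\frac{5}{2}\omega^{\SIAt}}$ the relation collapses to $e^{3\omega^{\SIAt}}H_u = Q^{\SIA}R^{\SIA}_u - e^{2\omega^{\SIAt}}(Q^{\SIA}_v e^{-\omega^{\SIAt}})_v$. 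The $(1,3)$ entry is treated symmetrically, with $u$ and $v$ and the roles of $Q^{\SIA}$ and $R^{\SIA}$ interchanged, and yields the companion $H_v$ equation. The remaining entries $(1,2)$, $(2,1)$, $(3,1)$ and $(3,2)$ turn out to be automatic identities: each side reduces to the same expression, so they merely re-encode the structure already built into the frame \eqref{coofr2} and impose no new condition.

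The computation is entirely mechanical, and the only real obstacle is the bookkeeping of the exponential weights $e^{\pm\frac{1}{2}\omega^{\SIAt}}$ and $e^{-\frac{3}{2}\omega^{\SIAt}}$ under the product rule. The delicate points are checking that the $H_u$ and $H_v$ terms emerge with exactly the prefactor $e^{3\omega^{\SIAt}}$, and that the combination $Q^{\SIA}_{vv} - \omega^{\SIA}_v Q^{\SIA}_v$ (respectively $R^{\SIA}_{uu} - \omega^{\SIA}_u R^{\SIA}_u$) reassembles into $e^{2\omega^{\SIAt}}(Q^{\SIA}_v e^{-\omega^{\SIAt}})_v$ (respectively $e^{2\omega^{\SIAt}}(R^{\SIA}_u e^{-\omega^{\SIAt}})_u$) once the common exponential factor is extracted.
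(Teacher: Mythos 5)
Your proposal is correct and is exactly the computation the paper intends: the Corollary is by construction the zero-curvature condition $\partial_v\mathcal{U}_{\SIA}-\partial_u\mathcal{V}_{\SIA}=[\mathcal{U}_{\SIA},\mathcal{V}_{\SIA}]$ applied to the explicit matrices \eqref{eq:UVIA}, and the paper leaves this entrywise verification implicit. Your identification of which entries yield the Gauss equation ($(1,1)$, repeated in $(2,2)$), which yield the two Codazzi equations ($(2,3)$ and $(1,3)$), and which are automatic, checks out, including the cancellation of the $H\omega_u$ (resp.\ $H\omega_v$) terms and the reassembly of $e^{\omega^{\SIAt}}(Q^{\SIA}_{vv}-\omega^{\SIA}_vQ^{\SIA}_v)$ into $e^{2\omega^{\SIAt}}(Q^{\SIA}_ve^{-\omega^{\SIAt}})_v$.
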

\begin{Theorem}[Fundamental theorem for indefinite Blaschke immersions]
\label{thm:BlaschkeIA}
Let $f^{\SIA}:\D \rightarrow \R^3$ be a Blaschke immersion with affine normal $\xi^{\SIA}$, 
indefinite Blaschke metric in null coordinates $u$ and $v$,
$ \d s^2 =2 e^{\omega^{\SIA}} \d u \d v$,  affine mean curvature $H$ 
 and cubic differential 
$C^{\SIA}  = Q^{\SIA} \;\d u^3 + R^{\SIA} \; \d v^3$. 
 Then the coordinate frame 
$\mathcal F_{\SIA}= (e^{-\frac{1}{2}\omega^{\SIAt}}f^{\SIA}_u, 
 e^{-\frac{1}{2}\omega^{\SIAt}} f^{\SIA}_v,  \xi^{\SIA} = e^{-\omega^{\SIA}} f_{uv}^{\SIA})$
 satisfies the Maurer-Cartan equation \eqref{eq:MCIA}. Here the coefficient matrices
 $\mathcal{U}_{\SIA}$ and $\mathcal{V}_{\SIA}$ 
 have the form \eqref{eq:UVIA}
 and their coefficients satisfy the equations stated 
 in Corollary {\rm \ref{compSIA}}.
 
 Conversely, given functions $\omega^{\SIA}, H$ on $\D$ together with 
 a cubic differential $Q^{\SIA}\d u^3 + R^{\SIA}\d v^3$ 
 such that the conditions of 
 Corollary {\rm \ref{compSIA}}
 are satisfied,
 then there exists a solution $\mathcal{F}_{\SIA} \in \SLR$ to the equation \eqref{eq:MCIA} such that 
 $f^{\SIA} = \mathcal{F}_{\SIA} e_3$ is an indefinite Blaschke immersion
 with null coordinates.
\end{Theorem}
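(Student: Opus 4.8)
The plan is to handle the two directions separately, taking advantage of the fact that the forward direction is essentially already packaged in the Lemma and Corollary \ref{compSIA} immediately preceding the theorem. For a given Blaschke immersion $f^{\SIA}$ with indefinite metric in null coordinates, the Lemma computes $\alpha_{\SIA} = \mathcal{F}_{\SIA}^{-1}\d\mathcal{F}_{\SIA}$ explicitly in the form \eqref{eq:UVIA}, and the Blaschke volume normalization \eqref{eq:volume} (equivalently \eqref{eq:affinemull}, which gives $\det[f^{\SIA}_u, f^{\SIA}_v, \xi^{\SIA}] = e^{\omega^{\SIAt}}$) forces $\det\mathcal{F}_{\SIA} = 1$, so the frame lands in $\SLR$. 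The compatibility equations of Corollary \ref{compSIA} are exactly the zero-curvature condition $\d\alpha_{\SIA} + \alpha_{\SIA}\wedge\alpha_{\SIA} = 0$, equivalently $(\mathcal{U}_{\SIA})_v - (\mathcal{V}_{\SIA})_u - [\mathcal{U}_{\SIA}, \mathcal{V}_{\SIA}] = 0$. Thus the forward direction needs only the assembly of these entrywise computations, which split into the Gauss equation and the two Codazzi equations.

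For the converse I would argue by integrating a flat connection. Given $\omega^{\SIA}, H, Q^{\SIA}, R^{\SIA}$ on the simply connected domain $\D$, I form $\alpha_{\SIA} = \mathcal{U}_{\SIA}\d u + \mathcal{V}_{\SIA}\d v$ using \eqref{eq:UVIA}; the hypothesis that the equations of Corollary \ref{compSIA} hold is precisely the flatness of $\d + \alpha_{\SIA}$. On a simply connected $\D$ the Maurer-Cartan (Frobenius) theorem then produces a map $\mathcal{F}_{\SIA}: \D \to \GL$, unique up to a left constant, with $\mathcal{F}_{\SIA}^{-1}\d\mathcal{F}_{\SIA} = \alpha_{\SIA}$. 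Since $\omega^{\SIA}, H, Q^{\SIA}, R^{\SIA}$ are real, the matrices $\mathcal{U}_{\SIA}, \mathcal{V}_{\SIA}$ are real and traceless, so choosing the initial value in $\SLR$ keeps $\mathcal{F}_{\SIA} \in \SLR$ throughout.

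Then I would recover the immersion. Writing $X_1, X_2, X_3$ for the columns of $\mathcal{F}_{\SIA}$, the structure equations yield $(e^{\frac{1}{2}\omega^{\SIAt}}X_1)_v = (e^{\frac{1}{2}\omega^{\SIAt}}X_2)_u = e^{\omega^{\SIAt}}X_3$, so the $\R^3$-valued $1$-form $e^{\frac{1}{2}\omega^{\SIAt}}(X_1\,\d u + X_2\,\d v)$ is closed; integrating on simply connected $\D$ produces $f^{\SIA}$ (unique up to translation) with $f^{\SIA}_u = e^{\frac{1}{2}\omega^{\SIAt}}X_1$, $f^{\SIA}_v = e^{\frac{1}{2}\omega^{\SIAt}}X_2$ and $f^{\SIA}_{uv} = e^{\omega^{\SIAt}}X_3$, so that $\mathcal{F}_{\SIA}e_3 = X_3 = e^{-\omega^{\SIAt}}f^{\SIA}_{uv}$ is the affine normal $\xi^{\SIA}$. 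It remains to check that $f^{\SIA}$ carries the prescribed invariants: that $\det\mathcal{F}_{\SIA} = 1$ returns the Blaschke normalization \eqref{eq:affinemull}; that differentiating $f^{\SIA}_u, f^{\SIA}_v$ through the off-diagonal entries of $\mathcal{U}_{\SIA}, \mathcal{V}_{\SIA}$ places $f^{\SIA}_{uu}, f^{\SIA}_{vv}$ in $\mathrm{span}\{f^{\SIA}_u, f^{\SIA}_v\}$, giving the null-coordinate conditions; and that $\det[f^{\SIA}_u, f^{\SIA}_{uu}, f^{\SIA}_{uuu}] = (Q^{\SIA})^2$ together with its $v$-analogue reproduces the cubic differential via \eqref{eq:QandRIA}, with $H$ read off from the shape operator.

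The hard part will be this last bookkeeping: confirming that the transversal field built into the frame is genuinely the Blaschke affine normal (that is, that the determinant normalization forced by $\SLR$ is equivalent to the equiaffine condition \eqref{eq:volume}) and that the signs and scalar factors in \eqref{eq:QandRIA} come out exactly, so that $(\omega^{\SIA}, H, Q^{\SIA}, R^{\SIA})$ are returned unchanged. One should also note that $\mathcal{F}_{\SIA}e_3$ is literally the affine normal $\xi^{\SIA}$ and that $f^{\SIA}$ itself is obtained by the extra integration just described; the two coincide precisely under the proper affine sphere normalization $\xi^{\SIA} = -Hf^{\SIA}$ with $H = -1$ and center at the origin, which is the case the identification $f^{\SIA} = \mathcal{F}_{\SIA}e_3$ refers to.
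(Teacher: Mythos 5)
Your proposal is correct and follows exactly the route the paper sets up: the forward direction is the content of the preceding Lemma and Corollary \ref{compSIA}, and the converse is the standard integration of the flat $\SLR$-valued connection on simply connected $\D$ followed by recovery of $f^{\SIA}$ from the closed $1$-form $e^{\frac{1}{2}\omega^{\SIAt}}(X_1\,\d u + X_2\,\d v)$ (the paper itself states the theorem without proof, deferring details to \cite{DE}). You also correctly flag and resolve the one genuine subtlety in the statement, namely that $\mathcal{F}_{\SIA}e_3$ is literally the affine normal $\xi^{\SIA}$ rather than the immersion, which coincides with $f^{\SIA}$ only under the proper-affine-sphere normalization $\xi^{\SIA}=f^{\SIA}$ used later in the section; your sign checks for \eqref{eq:affinemull} and \eqref{eq:QandRIA} all come out as claimed.
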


\subsection{Indefinite affine spheres}
From here on we will consider affine spheres.
As already pointed out in the last section this means that the shape operator $s$ is a multiple of the identity matrix.
We will also assume that the Blaschke metric is indefinite.
There are still two very different cases:

Case $H=0$:  these affine spheres are  called improper. They are very special and well known.
We will not consider this case.
Case $H \neq 0$:  such affine spheres are called \textit{proper}.
From now on,
 we will consider exclusively the proper case, and by 
 a scaling transformation
 we can assume that $H =-1$. Affine spheres with this property 
 are called 
 \textit{indefinite proper affine spheres}. 
 Then the Weingarten formula can be represented
 as 
\begin{align*}
\xi^{\SIA}_u= f^{\SIA}_u \quad \xi^{\SIA}_u = f^{\SIA}_u,
\end{align*}
 that is the affine normal $\xi^{\SIA}$ is the proper affine sphere 
 $f^{\SIA}$ itself
 up to a constant vector, that is, 
 $\xi^{\SIA} = f^{\SIA} + p$, where $p$ is some constant vector.
 By an affine transformation we can assume without loss of generality 
 $p =\boldsymbol 0$, and thus we have
 \[
  \xi^{\SIA} = f^{\SIA}.
 \]
 If we restrict to affine spheres, then the coefficient matrices  of the Maurer-Cartan equation
\begin{equation} \label{eq:AMCIA}
\alpha_{\SIA} =  \mathcal F_{\SIA}^{-1} \d \mathcal F_{\SIA} = \mathcal F_{\SIA}^{-1} 
 ({\mathcal F}_{\SIA})_u \d u  
 +  \mathcal F_{\SIA}^{-1} ({\mathcal F_{\SIA}})_v \d v 
 =  \mathcal U_{\SIA}  \d u  +  \mathcal V_{\SIA} \d v 
 \end{equation}
  are of the form
 \begin{align}\label{eq:AUVIA}
 \mathcal U_{\SIA} = \begin{pmatrix} 
 \frac{1}{2}\omega^{\SIA}_u   & 0 & e^{\frac{1}{2}\omega^{\SIAt}}\\
  Q^{\SIA}e^{-\omega^{\SIAt}} & -\frac{1}{2}\omega^{\SIA}_u & 
 0\\[0.1cm]
 0 & e^{\frac{1}{2}\omega^{\SIAt}}& 0
 \end{pmatrix},
\quad 
 \mathcal V_{\SIA} =
 \begin{pmatrix} 
 - \frac{1}{2}\omega^{\SIA}_v
 & R^{\SIA} e^{-\omega^{\SIAt}}  & 0  \\
 0 & \frac{1}{2}\omega^{\SIA}_v &  e^{\frac{1}{2}\omega^{\SIAt}}\\
 e^{\frac{1}{2}\omega^{\SIAt}}& 0 & 0
\end{pmatrix}. 
\end{align}
 Moreover, the integrability conditions now are
\begin{align}\label{eq:TzitzeicaInAff}
 \omega^{\SIA}_{uv} - e^{\omega^{\SIAt}} +e^{- 2 \omega^{\SIAt}} 
 Q^{\SIA} R^{\SIA} =0,\quad \quad  Q^{\SIA}_{v} = R^{\SIA}_u =0.
\end{align}
 The first equation in \eqref{eq:TzitzeicaCHT} is again a  \textit{Tzitz\'eica equation}.
 From the definition of $Q^{\SIA}$ and $R^{\SIA}$ in \eqref{eq:QandRIA}, it is clear that 
 \[
  C^{\SIA}(u, v) = Q^{\SIA}(u) \d u^3 + R^{\SIA}(v) \d v^3 
 \]
 is the \textit{real} cubic differential of the indefinite 
 affine sphere $f^{\SIA}$.  
 \begin{Remark}
 The fundamental theorem in Theorem \ref{thm:BlaschkeIA} is 
 still true for an indefinite affine spheres.
 \end{Remark}
\subsection{Associated families of indefinite affine spheres 
 and flat connections}
 From \eqref{eq:TzitzeicaInAff}, it is clear that there 
 exists a one-parameter 
 family of solutions parametrized by $\l \in \R^+,$ where 
 the original surface is reproduced for $\lambda = 1$.
 Then the corresponding family $\{ \omega_{\SIA}^{\l}, 
 C_{\SIA}^{\l}\}_{\l \in \R^+}$
 satisfies
\[
  \omega_{\SIA}^{\l} = \omega^{\SIA}, \quad C_{\SIA}^{\l} = \l^{-3} Q^{\SIA} \;\d u^3 + \l^{3} R^{\SIA}\; \d v^3.
\]
As a consequence, there exists a one-parameter family of 
 indefinite affine spheres $\{\hat f_{\SIA}^{\l}\}_{\l \in \R^+}$ such that 
 $\hat f_{\SIA}^{\l}|_{\l=1} = f^{\SIA}$.
 The family $\{\hat f_{\SIA}^{\l}\}_{\l \in \R^{+}}$ will be called the \textit{associated 
 family} of $f^{\SIA}$.
 Let $\hat {\mathcal F}_{\SIA}^{\l} $ be the coordinate frame of 
 $\hat f_{\SIA}^{\l}$.
 Then the Maurer-Cartan form $\hat \alpha_{\SIA}^{\l}  = 
 \hat {\mathcal U}_{\SIA}^{\l} \d u + 
 \hat {\mathcal V}_{\SIA}^{\l} \d v$ of $\hat {\mathcal F}_{\SIA}^{\l}$
 for the associated family 
 $\{\hat f_{\SIA}^{\l}\}_{\lambda \in \R^{+}}$
 is given by ${\mathcal U}_{\SIA}$ and ${\mathcal V}_{\SIA}$ as in 
 \eqref{eq:AUVIA} where we have replaced $Q^{\SIA}$ and 
 $R^{\SIA}$ by $\lambda^{-3} Q^{\SIA}$ 
 and $\lambda^3 R^{\SIA}$, respectively.

 Then consider the  gauge transformation $G^{\l}$ 
 \begin{equation}\label{eq:extIA}
 F_{\SIA}  =\hat {\mathcal F}_{\SIA}^{\l} G^{\l}, \quad 
 G^{\l} = \di ( \lambda, \lambda^{-1}, 1).
 \end{equation}
 This yields
\[
\alpha_{\SIA}^{\l} =  (F_{\SIA}^{\l})^{-1}  \d F_{\SIA}^{\l} 
 = U_{\SIA}^{\l} \d u + V_{\SIA}^{\l} \d v	   
\] 
 with $U_{\SIA}^{\l} = (G^{\l})^{-1}\hat{\mathcal U}_{\SIA}^{\l} G^{\l}$ 
 and $V_{\SIA}^{\l} = (G^{\l})^{-1}\hat{\mathcal V}_{\SIA}^{\l} G^{\l}$.
 It is easy to see
 that $\hat{\mathcal F}_{\SIA}^{\l} G^{\l} e_3 
 = \hat{\mathcal F}_{\SIA}^{\l} e_3$ holds. Define 
 $f_{\SIA}^{\l} = \hat{\mathcal F}_{\SIA}^{\l} G^{\l} e_3$. 
 Then we do not distinguish between 
 $\{\hat f_{\SIA}^{\lambda}\}_{\lambda \in \R^{+}}$ and 
 $\{f_{\SIA}^{\lambda}\}_{\lambda \in \R^{+}}$, and either 
 one will be 
 called the associated family, and $F_{\SIA}^{\l}$ will 
 also be called the 
 coordinate frame of $f_{\SIA}^{\l}$.
 
 From the discussion in the previous section, 
 the family of Maurer-Cartan forms $\alpha^{\l}_{\SIA}$  
 of the indefinite proper affine sphere $f^{\SIA}:M \to \R^3$  can be 
 computed explicitly as 
  \begin{equation}\label{eq:alphalambda-origA}
 \alpha_{\SIA}^{\l} = U_{\SIA}^{\lambda} \d u + V_{\SIA}^{\lambda} \d v,
 \end{equation} 
 for $\lambda \in \C^{\times}$, where 
 $U_{\SIA}^{\lambda}$ and $V_{\SIA}^{\lambda}$ are  given by  
\begin{align*}
U_{\SIA}^{\lambda} = 
 \begin{pmatrix} 
 \frac{1}{2}\omega^{\SIA}_u   & 0 & \l^{-1}e^{\frac{1}{2}\omega^{\SIAt}}\\
 \l^{-1} Q^{\SIA}e^{-\omega^{\SIAt}} & -\frac{1}{2}\omega^{\SIA}_u & 0\\
 0 & \l^{-1}e^{\frac{1}{2}\omega^{\SIAt}}& 0
 \end{pmatrix}, \quad 
 V_{\SIA}^{\lambda} = 
 \begin{pmatrix} 
 - \frac{1}{2}\omega^{\SIA}_v
 & \l R^{\SIA} e^{-\omega^{\SIAt}}  & 0 \\
 0 & \frac{1}{2}\omega^{\SIA}_v & \l e^{\frac{1}{2}\omega^{\SIAt}}\\
 \l e^{\frac{1}{2}\omega^{\SIAt}}& 0 & 0
\end{pmatrix}.
\end{align*}
 It is clear that $\alpha_{\SIA}^{\lambda}|_{\lambda=1}$ is 
 the Maurer-Cartan form of the coordinate frame $\mathcal F_{\SIA}$ 
 of $f^{\SIAt}$. 
 Then by the discussion in the previous subsection, 
 we have the following theorem. 
\begin{Theorem}[\cite{DE}]\label{thm:flatconnectionsAf}
 Let $f^{\SIA}: \D \to \R^3$ be an indefinite proper affine sphere in $\R^3$ and 
 let $\alpha_{\SIA}^{\l}$ be the family of Maurer-Cartan forms defined in 
 \eqref{eq:alphalambda-origA}.
 Then $\d + \alpha_{\SIA}^{\lambda}$ gives a family of flat connections
 on $\D \times \SLR$. 

 Conversely, given a family of connections  $\d 
 + \alpha_{\SIA}^{\lambda}$ 
 on $\D \times \SLR$, where $\alpha_{\SIA}^{\lambda}$ is as 
 in \eqref{eq:alphalambda-origA}, then  $\d + \alpha_{\SIA}^{\lambda}$ 
 belongs  to an associated 
 family of indefinite affine spheres into $\R^3$ 
 if and only if the connection is flat for all 
 $\lambda \in \R^+$.
\end{Theorem}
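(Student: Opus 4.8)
The plan is to reduce both directions to the zero-curvature form of the Maurer--Cartan equation for the $\lambda$-family $\alpha_{\SIA}^{\l}=U_{\SIA}^{\lambda}\,\d u+V_{\SIA}^{\lambda}\,\d v$ of \eqref{eq:alphalambda-origA}, and then to invoke the fundamental theorem, Theorem \ref{thm:BlaschkeIA}. First I would record that for $\lambda\in\R^{+}$ the matrices $U_{\SIA}^{\lambda},V_{\SIA}^{\lambda}$ have real entries (here $Q^{\SIA},R^{\SIA}$ are the \emph{real} cubic-differential coefficients and $e^{\omega^{\SIAt}/2},\lambda^{\pm1}>0$) and are trace-free, so $\d+\alpha_{\SIA}^{\lambda}$ is genuinely a connection on $\D\times\SLR$; this is the reality condition attached to the involution $(\maltese_{\SIA})$, which for real $\lambda$ just says the frame is real.

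For the forward direction, observe that the integrability system \eqref{eq:TzitzeicaInAff} is invariant under the scaling $Q^{\SIA}\mapsto\lambda^{-3}Q^{\SIA}$, $R^{\SIA}\mapsto\lambda^{3}R^{\SIA}$, since it leaves $\omega^{\SIA}$ and the product $Q^{\SIA}R^{\SIA}$ unchanged. Hence, for each $\lambda$, the scaled data still satisfies \eqref{eq:TzitzeicaInAff}; by the construction of the associated family preceding the theorem, $\alpha_{\SIA}^{\lambda}$ is the Maurer--Cartan form of the coordinate frame $F_{\SIA}^{\l}=\hat{\mathcal F}_{\SIA}^{\l}G^{\l}$ from \eqref{eq:extIA} of an honest indefinite affine sphere $\hat f_{\SIA}^{\lambda}$, and a Maurer--Cartan form automatically satisfies the structure equation. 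Thus $\d+\alpha_{\SIA}^{\lambda}$ is flat for every $\lambda$.

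For the converse, I would compute the curvature $\Omega^{\lambda}=\d\alpha_{\SIA}^{\lambda}+\alpha_{\SIA}^{\lambda}\wedge\alpha_{\SIA}^{\lambda}$, whose $\d u\wedge\d v$-coefficient is $(V_{\SIA}^{\lambda})_{u}-(U_{\SIA}^{\lambda})_{v}+[U_{\SIA}^{\lambda},V_{\SIA}^{\lambda}]$. Since $U_{\SIA}^{\lambda}$ is affine in $\lambda^{-1}$ and $V_{\SIA}^{\lambda}$ is affine in $\lambda$, this coefficient is a Laurent polynomial in $\lambda$ supported in degrees $-1,0,1$. Flatness for all $\lambda\in\R^{+}$ means this polynomial vanishes at infinitely many points, so each of its three coefficients vanishes separately. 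A direct inspection of the explicit matrices in \eqref{eq:alphalambda-origA} shows that the $\lambda^{-1}$- and $\lambda^{1}$-coefficients give exactly $Q^{\SIA}_{v}=0$ and $R^{\SIA}_{u}=0$, while the $\lambda^{0}$-coefficient gives the Tzitz\'eica equation $\omega^{\SIA}_{uv}-e^{\omega^{\SIAt}}+e^{-2\omega^{\SIAt}}Q^{\SIA}R^{\SIA}=0$; that is, precisely the system \eqref{eq:TzitzeicaInAff}.

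Having recovered \eqref{eq:TzitzeicaInAff}, the converse part of Theorem \ref{thm:BlaschkeIA} (with $H=-1$) produces an indefinite proper affine sphere realizing the data $(\omega^{\SIA},Q^{\SIA},R^{\SIA})$, and the preceding subsection then identifies the whole family $\{\d+\alpha_{\SIA}^{\lambda}\}$ with its associated family; this settles the ``if''. The ``only if'' is just the forward direction applied to a member of an associated family. I expect the main nuisance, rather than a genuine obstacle, to be the bookkeeping in separating the curvature by powers of $\lambda$ and checking that the off-diagonal entries of the $\lambda^{\pm1}$-coefficients collapse exactly to $Q^{\SIA}_{v}$ and $R^{\SIA}_{u}$ after cancellation against the diagonal $\frac12\omega^{\SIA}_{u},\frac12\omega^{\SIA}_{v}$ terms; one must also note that $\R^{+}$, though a proper subset of $\C^{\times}$, is still infinite, which is all that is needed to kill the Laurent coefficients.
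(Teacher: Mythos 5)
Your proposal is correct and follows essentially the route the paper intends: the paper does not write out a proof of Theorem \ref{thm:flatconnectionsAf} (it defers to ``the discussion in the previous subsection'' and to \cite{DE}), and the analogous proof it does give for the definite case, Theorem \ref{thm:flatconnectionsDA}, rests on exactly your mechanism --- the curvature of $\d+\alpha^{\lambda}$ is a Laurent polynomial in $\lambda$ supported in degrees $-1,0,1$, whose vanishing for all $\lambda$ in an infinite set forces the coefficients to vanish separately, recovering \eqref{eq:TzitzeicaInAff}, after which the fundamental theorem (Theorem \ref{thm:BlaschkeIA} with $H=-1$) produces the surface. The only step present in the paper's definite-case proof that you skip, namely gauging by fixed points of $\sigma$ and $\tau$ to normalize the $(1,3)$-entry, is indeed unnecessary here because the statement already assumes $\alpha_{\SIA}^{\lambda}$ has the explicit matrix form of \eqref{eq:alphalambda-origA}.
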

 
\section{Extended frames and the loop group method}\label{sc:dpw}

\subsection{Surfaces and extended frames}\label{subsc:surfext}
In the first five sections we started from five different general surface classes: Lagrangian immersions into $\CP$;
 Lagrangian immersions into $\CH$; Timelike Lagrangian immersions 
 into $\CHI$; Definite Blaschke surfaces in $\R^3$; 
Indefinite Blaschke surfaces in $\R^3$.

 For each of these surface classes we have introduced natural 
 frames (not always ``coordinate frames'' in the classical sense) 
 and have characterized them by  their ``shape''.
 The Maurer-Cartan equations of these frames were 
 (due to the special shape of the
 coefficient  matrices) integrable if and only if 
 a simple set of (highly non-trivial) equations was satisfied.

 Inside of each of the classes of surfaces listed 
 above we singled out a special type of surfaces.
 Respectively these were
\begin{itemize}
\item[$(\bullet_{\SCP})$] Minimal Lagrangian immersions into $\CP$,
\item[$(\bullet_{\SCH})$] Minimal Lagrangian immersions into $\CH$,
\item[$(\maltese_{\ST})$] Timelike minimal Lagrangian immersions into $\CHI$, 
\item[$(\bullet_{\SEA})$] Definite affine spheres in $\R^3$, 
\item[$(\maltese_{\SIA})$] Indefinite affine spheres in $\R^3$.
\end{itemize}
We showed that for all these special cases either a conformal parameter or a real 
(``asymptotic line'')  parameter is natural to choose for a ``convenient'' treatment.
 The cases with a preferable conformal parameter are indicated 
 by a $\bullet$ 
 and the other cases by a $\maltese$.
 Each of the classes of surfaces can be characterized 
 by a \textit{Tzitz\'eica equation}:
\begin{align*}
(\bullet_{\SCP})\quad&  \omega^{\SCP}_{z \bar{z}} +  e^{\omega^{\SCPt}} -
 |Q^{\SCP}|^2 e^{-2 \omega^{\SCPt}} =0, \quad 
 Q^{\SCP}_{\bar{z}} = 0,   \\
(\bullet_{\SCH}) \quad& \omega^{\SCH}_{z \bar{z}} 
 -  e^{\omega^{\SCHt}} + |Q^{\SCH}|^2 e^{-2 \omega^{\SCHt}} =0, \quad 
 Q^{\SCH}_{\bar{z}} = 0, \\
 (\maltese_{\ST}) \quad& \omega^{\ST}_{uv} - e^{\omega^{\STt}} +e^{- 2 \omega^{\STt}} Q^{\ST}
 R^{\ST} =0,\quad  Q^{\ST}_{v} = R^{\ST}_u =0, \\
(\bullet_{\SEA}) \quad& \omega^{\SEA}_{z\bar{z}} + H e^{\omega^{\SEAt} } + |Q^{\SEA}|^2  e^{-2\omega^{\SEAt} } = 0, \;\;(H = \pm 1), \quad Q^{\SEA}_{\bar{z}}  = 0,
  \\
 (\maltese_{\SIA}) \quad& \omega^{\SIA}_{uv} - e^{\omega^{\SIAt}} +e^{- 2 \omega^{\SIAt}} 
 Q^{\SIA} R^{\SIA} =0,\quad Q^{\SIA}_{v} = R^{\SIA}_u =0.
\end{align*}
 Note that $Q^{\ST}, R^{\ST}$ take values in $i \R$ and 
 $Q^{\SIA}, R^{\SIA}$ take values in $\R$, respectively.

For the conformal cases one can introduce a loop parameter
$\lambda \in S^1$ which produces an associated family of surfaces of the same type. 
For the asymptotic line  cases one can introduce a loop parameter
$\lambda \in \R_>0$ which produces an associated family of surfaces of the same type. 

 For general (non-geometric) purposes one can usually use $\lambda \in 
 \C^{\times}$.

The loop parameter was introduced in a special way:
Let  $\mathcal{F}$ denote the frame associated with a surface of one of the special classes listed above. Then we write
$\mathcal{F}^{-1} \d\mathcal{F} = \alpha$,
and write 
\[
 \alpha =\mathcal{F}^{-1} \d\mathcal{F} = \mathcal{U} \d a + 
 \mathcal{V} \d b,
\]
 where for the conformal case, $(a, b)$ is 
 given by  complex coordinates $(a, b) = (z, \bar z)$ 
 with $z = x + i y$, 
 and for the asymptotic line case, 
 $(a, b)$ is given by null coordinates, $(a, b) = (u, v)$
 with real $u, v$.
 Actually, one decomposes naturally in all cases $\mathcal{U}  = U_{-1} + U_0$
and $\mathcal{V}  = V_{1} + V_0$ and  introduces the 
 ``loop parameter''  $\lambda$ such that
\begin{equation}
 \alpha^\lambda = \lambda^{-1}U_{-1} \d a  + \alpha_0 + \l V_{1} \d b,
\end{equation}
 with $\alpha_0 = U_0 \, \d a  +V_0 \, \d b$.
 In fact $\alpha^{\l}$ is exactly a family of Maurer-Cartan forms $\alpha^{\l}_{*}$
 as in the previous five sections, where $*$ is one of $\CP$, $\CH$, $\CHI$, 
 $\mathbb A^3$ or $i \mathbb A^3$.
 The $1$-form $\alpha^\lambda$ will be called 
 \textit{the extended Maurer-Cartan form} and a unique
 solution to the equation
\begin{equation}\label{eq:extendedframe}
 ({F}^\lambda)^{-1} \d F^\lambda = \alpha^\lambda, \quad 
 F^{\l}(p_0) = \id
\end{equation}
 with some base point $p_0 \in \D$ will be called an \textit{extended frame}.
 Thus the coordinate frames $F^{\l}_{*}$ of the associated family of $f^{\l}_{*}$
 are in all five cases the extended frames up to an initial condition, 
 where $*$ is one of $\CP$, $\CH$, $\CHI$, 
 $\mathbb A^3$ or $i \mathbb A^3$.
 In all five cases we have stated a theorem saying 
\begin{Theorem}
 A surface is in the special class considered if and only if the
 family of Maurer-Cartan form $\alpha^\lambda$ yields 
 a flat connection $\d + \alpha^\lambda$.
\end{Theorem}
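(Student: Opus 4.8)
The plan is to read this statement as the common content of Theorems~\ref{thm:flatconnectionsCP}, \ref{thm:flatconnectionsCH}, \ref{thm:flatconnectionsT}, \ref{thm:flatconnectionsDA} and \ref{thm:flatconnectionsAf}, and to isolate the single mechanism behind all five of them: the $\lambda$-grading of the Maurer-Cartan (zero-curvature) equation. Writing $\alpha^\lambda = \mathcal{U}^\lambda \d a + \mathcal{V}^\lambda \d b$ with $\mathcal{U}^\lambda = \lambda^{-1} U_{-1} + U_0$ and $\mathcal{V}^\lambda = V_0 + \lambda V_1$, flatness of $\d + \alpha^\lambda$ is the condition $\d \alpha^\lambda + \alpha^\lambda \wedge \alpha^\lambda = 0$, which in the coordinates $(a,b)$ reads
\[
\partial_a \mathcal{V}^\lambda - \partial_b \mathcal{U}^\lambda + [\mathcal{U}^\lambda, \mathcal{V}^\lambda] = 0 .
\]
First I would substitute the explicit $\lambda$-dependence and expand the left-hand side as a Laurent polynomial in $\lambda$ supported in degrees $-1,0,1$, whose coefficients are $-\partial_b U_{-1} + [U_{-1},V_0]$, then $\partial_a V_0 - \partial_b U_0 + [U_{-1},V_1] + [U_0,V_0]$, and finally $\partial_a V_1 + [U_0,V_1]$.

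The key step is to collect these powers of $\lambda$. Since flatness is required for every $\lambda$ in an infinite set ($S^1$ in the conformal cases, $\R^{+}$ in the asymptotic-line cases), a Laurent polynomial vanishing there must vanish identically, so each of its three coefficients vanishes separately. The $\lambda^0$-coefficient reproduces exactly the relevant Tzitz\'eica equation — the principal nonlinear PDE of each class, the $e^{\omega}$-nonlinearity arising from the bracket $[U_{-1},V_1]$ of the off-diagonal exponential entries — while the $\lambda^{\pm 1}$-coefficients yield the first-order holomorphicity (respectively conservation) conditions $Q_{\bar z}=0$ or $Q_v = R_u = 0$, together with the vanishing of the mean-curvature terms that singles out the minimal, respectively proper-affine-sphere, subclass. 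This is precisely where the $\bullet$ versus $\maltese$ distinction and the differing signs in the five Tzitz\'eica equations enter; verifying that the three coefficients match the integrability conditions \eqref{eq:TzitzeicaCP}, \eqref{integrmin}, \eqref{eq:TzitzeicaCHT}, \eqref{eqgc} and \eqref{eq:TzitzeicaInAff} is the one genuinely computational point, already carried out case by case earlier, so here I would only record the uniform pattern rather than redo each bracket.

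For the forward implication the surface lies in the special class, so the listed integrability conditions hold, all three $\lambda$-coefficients vanish, and $\d + \alpha^\lambda$ is flat for every $\lambda$. For the converse, flatness for all $\lambda$ forces the same coefficients to vanish, returning the Tzitz\'eica equation and the first-order conditions; the respective fundamental theorems (Theorems~\ref{thm:fundCP}, \ref{thm:fundCH}, \ref{thm:fundCHI}, \ref{thm:fundDA}, \ref{thm:BlaschkeIA}) then integrate this data, after fixing an initial condition as in \eqref{eq:extendedframe}, to an actual frame and hence a surface of the prescribed type.

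The main obstacle I anticipate is not the $\lambda$-expansion but ensuring that the reconstruction in the converse direction lands in the correct real form: the extended frame $F^\lambda$ must take values in the appropriate twisted real loop group (the fixed-point set of the relevant involution among $(\bullet_{\SCP}),(\bullet_{\SCH}),(\maltese_{\ST}),(\bullet_{\SEA}),(\maltese_{\SIA})$), so that evaluation at $\lambda = 1$ returns a frame in $\SU$, $\SUI$, $\SUIT$, $\widetilde{\SLR}^{\pm}$ or $\SLR$ respectively. Checking that the prescribed shape of $\mathcal{U}^\lambda,\mathcal{V}^\lambda$ is simultaneously compatible with the order-six twisting $\sigma$ and with the reality involution $\tau$ — which is exactly the content of the real-form classification recalled in the introduction — is the delicate point in each case, though it again reduces to the componentwise checks already present in the preceding sections.
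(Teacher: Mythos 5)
Your proposal is correct and matches the paper's (largely implicit) argument: the theorem is stated as a summary of Theorems~\ref{thm:flatconnectionsCP}, \ref{thm:flatconnectionsCH}, \ref{thm:flatconnectionsT}, \ref{thm:flatconnectionsDA} and \ref{thm:flatconnectionsAf}, each of which rests on exactly the $\lambda$-graded expansion of the zero-curvature equation you write down, with the $\lambda^{0}$-coefficient giving the Tzitz\'eica equation and the $\lambda^{\pm1}$-coefficients giving $Q_{\bar z}=0$ (resp.\ $Q_v=R_u=0$), after which the fundamental theorems integrate the data back to a surface. One small inaccuracy: in the converse the mean-curvature terms are not \emph{derived} to vanish from flatness --- they are already absent from the prescribed ansatz for $\alpha^{\lambda}$ (and $H=\pm1$ is built in for the affine case), so minimality/properness is part of the hypothesis on the shape of the connection rather than a consequence of the $\lambda^{\pm1}$-coefficients.
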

 Since in all our cases the special surface of actual interest 
 can be derived (quite) directly
 from the extended frame, 
 one of our goals is to construct all these frames.
\begin{Corollary}
The construction of all special surfaces listed above is equivalent to the construction 
of all the $1$-forms $\alpha^{\lambda} $.
\end{Corollary}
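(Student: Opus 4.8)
The plan is to read the statement as the assertion that the two assignments --- ``special surface $\mapsto$ extended Maurer--Cartan form $\alpha^\lambda$'' and ``$\alpha^\lambda \mapsto$ special surface'' --- are mutually inverse up to the normalizations already isolated in the preceding five sections, and then to deduce this directly from the Theorem above together with the five fundamental theorems. No new computation is required; the content is purely the bookkeeping of the two directions, so I would present it as such.

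First I would treat the forward direction. Given a surface $f_*$ in one of the five special classes, the preceding sections attach to it a coordinate frame $\mathcal{F}_*$ and extract $\mathcal{F}_*^{-1}\d\mathcal{F}_* = \mathcal{U}\,\d a + \mathcal{V}\,\d b$ in the prescribed shape; decomposing $\mathcal{U} = U_{-1} + U_0$ and $\mathcal{V} = V_{1} + V_0$ and inserting the loop parameter via $\alpha^\lambda = \lambda^{-1}U_{-1}\,\d a + \alpha_0 + \lambda V_{1}\,\d b$ produces exactly the family $\alpha^\lambda_*$ of that section. Thus every special surface determines a $1$-form $\alpha^\lambda$ of the required algebraic type, lying in the order $6$ twisted loop algebra $\Lambda\sli_3 \C_\sigma$ and in the appropriate real form fixed by the relevant $\tau$.

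For the converse direction I would start from an abstract $\alpha^\lambda$ of the prescribed shape in the correct real form and assume $\d + \alpha^\lambda$ is flat for all $\lambda$ in the relevant parameter set ($S^1$ in the cases marked $\bullet$, $\R_{>0}$ in the cases marked $\maltese$). By the Theorem above, equivalently by the case-by-case flat-connection results (Theorems \ref{thm:flatconnectionsCP}, \ref{thm:flatconnectionsCH}, \ref{thm:flatconnectionsT}, \ref{thm:flatconnectionsDA}, \ref{thm:flatconnectionsAf}), this flatness is exactly the corresponding Tzitz\'eica/integrability system. I would then integrate \eqref{eq:extendedframe} with the normalization $F^\lambda(p_0) = \id$ to obtain the extended frame $F^\lambda$, and recover the surface from the last column of $F^\lambda$ --- via $f_* = \pi\circ(F^\lambda e_3)$ in the projective cases and by taking the real part of the last column in the affine-sphere cases --- the result being a surface in the special class by the respective fundamental theorem (Theorems \ref{thm:fundCP}, \ref{thm:fundCH}, \ref{thm:fundCHI}, \ref{thm:fundDA}, \ref{thm:BlaschkeIA}). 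Composing the two assignments returns the original data up to the ambiguities already documented: the choice of horizontal lift up to a constant factor in $S^1$, the $S^1$-scaling placing the frame in the unimodular group, the base point $p_0$, and the gauge $G^\lambda = \di(\lambda,\lambda^{-1},1)$ relating $\hat{\mathcal{F}}^\lambda$ to $F^\lambda$; none of these alter the underlying immersion.

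The one point that genuinely needs care --- and which I expect to be the main obstacle --- is the well-definedness of the recovery map: one must check that an abstractly prescribed $\alpha^\lambda$, flat for all admissible $\lambda$, really integrates to a frame from which a \emph{nondegenerate} immersion of the correct signature and type can be read off, and that it respects both the order $6$ twisting $\sigma$ and the relevant reality condition $\tau$ so that $F^\lambda$ lands in the correct real loop group. This is precisely what the reality and twisting analysis of the individual sections guarantees, so the corollary follows by assembling those results rather than by any fresh argument.
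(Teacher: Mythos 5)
Your proposal is correct and follows exactly the route the paper intends: the corollary is stated as an immediate consequence of the preceding theorem (flatness of $\d+\alpha^{\lambda}$ for all admissible $\lambda$ characterizes the special classes) together with the observation that the surface is recovered from the extended frame by integrating \eqref{eq:extendedframe} and projecting, with the ambiguities (horizontal lift, base point, gauge $G^{\lambda}$) being exactly those already normalized in the first five sections. Your added attention to well-definedness of the recovery map is a reasonable elaboration of what the individual fundamental theorems already guarantee, not a departure from the paper's argument.
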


\subsection{Flat connections and primitive frames}
To find all  $\alpha^{\lambda} $ (at least in an abstract sense) these 1-forms need to be described more specifically. To this end we consider the complex Lie algebra 
\begin{equation}
\mathfrak{g} = \sl
\end{equation}
and the order $6$ automorphism  $\hat \sigma$ of $\mathfrak{g}$ 
given by ($X \in \sl$):
\begin{equation} \label{defsigma}
\hat{\sigma} (X) =  - P X^T P, 
\end{equation}
where 
\begin{equation}
P = 
\begin{pmatrix}
0& \epsilon^2 & 0\\
\epsilon^4& 0 & 0\\
0&0&1\\
\end{pmatrix}
\left(
= \di (\epsilon^2, \epsilon^4, -1) P_0\right),
\end{equation}
 with $\epsilon = e^{\frac{ i\pi}{3}}$.
 Then on $\mathfrak{g}$ the automorphism $\hat{\sigma}$ has $6$ different eigenspaces 
\begin{equation}\label{eq:eigenspaces}
 \mathfrak{g}_j\subset \mathfrak g,  
\end{equation}
 such that $[\mathfrak{g}_i, \mathfrak{g}_j] \subset \mathfrak{g}_{i+j}
 \quad (\operatorname{mod} 6)$ holds  
 for the eigenvalues $\epsilon = e^{\frac{2 i\pi j}{6}}$ with 
 $j = 0, 1, 2, \dots, 5$. 
 Note that we then have for example  $\mathfrak{g}_{-1} =  \mathfrak{g}_5$ etc. 
 and we also have  $0 \subset \mathfrak{g}_0.$
 The crucial result for our discussion is:
\begin{Theorem} \label{propalpha}
 For all special surface classes the matrices $U_j$ and $V_j$
 are contained in the eigenspace of $\hat{\sigma}$ for the eigenvalue $e^{ 2 i \pi j/6}$, 
 that is, $U_j, V_j \in  \mathfrak{g}_j.$
More precisely we have 
\begin{equation} \label{primitive}
 \alpha^\lambda = \lambda^{-1}U_{-1} \d a + \alpha_0 + \lambda V_{1} \d b
 \in \mathfrak{g}_{-1} \oplus \mathfrak{g}_0 \oplus  \mathfrak{g}_1,
\end{equation}
where $a$ and $b$ denote the coordinates of the surface class under consideration.
 Moreover, for each special surface class there exists an anti-holomorphic involutory automorphism $\hat \tau$ of $\mathfrak{g}$ such that
 \begin{equation} \label{realform}
 \alpha^\lambda \in \mathfrak{g}^{\hat \tau}, 
 \end{equation}
 where $\mathfrak{g}^{\hat \tau}$ denotes the real subalgebra of $\mathfrak{g}$ consisting of all elements in $\mathfrak{g}$ which are fixed by 
 $\hat \tau$.
\end{Theorem}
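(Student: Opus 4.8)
The plan is to prove both assertions by a direct, case-by-case inspection of the five explicit families $\alpha^\lambda_{*}$ already computed in Sections \ref{sc:mLi}--\ref{sc:affine}, after first making the eigenspace decomposition of $\hat\sigma$ completely explicit. First I would evaluate $\hat\sigma(X) = -PX^TP$ on the matrix units $E_{ij}$ and on the diagonal, using $P^2 = \id$, $\epsilon^3 = -1$ and $\epsilon^6 = 1$. A short computation gives $\hat\sigma(E_{13}) = -\epsilon^2 E_{32}$, $\hat\sigma(E_{32}) = -\epsilon^2 E_{13}$, $\hat\sigma(E_{21}) = -\epsilon^2 E_{21}$, $\hat\sigma(E_{12}) = \epsilon E_{12}$, $\hat\sigma(E_{23}) = \epsilon E_{31}$, $\hat\sigma(E_{31}) = \epsilon E_{23}$ and $\hat\sigma(\di(a,b,c)) = -\di(b,a,c)$. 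Diagonalizing yields the six eigenspaces $\mathfrak{g}_0 = \C\,\di(1,-1,0)$, $\mathfrak{g}_1 = \C E_{12}\oplus\C(E_{23}+E_{31})$, $\mathfrak{g}_2 = \C(E_{13}-E_{32})$, $\mathfrak{g}_3 = \C\,\di(1,1,-2)$, $\mathfrak{g}_4 = \C(E_{23}-E_{31})$ and $\mathfrak{g}_{-1} = \mathfrak{g}_5 = \C(E_{13}+E_{32})\oplus\C E_{21}$, of dimensions $(1,2,1,1,1,2)$; this reproduces the $A_2^{(2)}$-grading and in particular confirms $[\mathfrak{g}_i,\mathfrak{g}_j]\subseteq\mathfrak{g}_{i+j}$.

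For the grading assertion \eqref{primitive} I would read $U_{-1}, U_0, V_0, V_1$ off the displayed forms in each case. In every case the diagonal part is $\tfrac12\omega_z\,\di(1,-1,0)$ (or $\tfrac12\omega_u\,\di(1,-1,0)$), which lies in $\mathfrak{g}_0$. For the nilpotent pieces one checks $U_{-1}\in\mathfrak{g}_{-1}$ and $V_1\in\mathfrak{g}_1$: for $\CH$, $\CHI$, the two affine-sphere cases and the indefinite case this is immediate, since there the $(1,3)$- and $(3,2)$-entries (and symmetrically the $(2,3)$- and $(3,1)$-entries) occur with equal sign, so $U_{-1}$ is a combination of $E_{13}+E_{32}$ and $E_{21}$, both in $\mathfrak{g}_{-1}$, while $V_1$ combines $E_{12}$ and $E_{23}+E_{31}$, both in $\mathfrak{g}_1$. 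The one case needing care is $\CP$: there the $(3,2)$- and $(3,1)$-entries carry the opposite sign, so the raw $U_{-1}$ is a combination of $E_{13}-E_{32}\in\mathfrak{g}_2$ and $E_{21}\in\mathfrak{g}_{-1}$. The remedy is to conjugate the $\CP$-frame once and for all by the fixed unitary $T = \di(1,1,i)$, the intertwiner between the definite realization carried by $\SU$ and the realization in which $\hat\sigma$ is written; since $T\bar T = \id$ this conjugation preserves $\SU$ and fixes the diagonal $\mathfrak{g}_0$-part, while it symmetrizes the offending entries, giving $TU_{-1}T^{-1}\in\mathfrak{g}_{-1}$ and $TV_1T^{-1}\in\mathfrak{g}_1$. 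The same diagonal normalization is already built into the elliptic affine gauge $\di(i\lambda, i\lambda^{-1}, 1)$ of \eqref{eq:UVe}, which is why that case needs no further adjustment.

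For the real-form assertion \eqref{realform} I would invoke the results established in the individual sections that each coordinate frame takes values in the respective real group, namely $\SU$, $\SUI$, $\SUIT$, $\widetilde\SLR^{\pm}$, $\SLR$; hence the $\lambda = 1$ Maurer-Cartan form is valued in the corresponding real subalgebra $\mathfrak{g}^{\hat\tau}$, with $\hat\tau$ the infinitesimal involution read off from the list in the Introduction: $\hat\tau(X) = -\bar X^T$ for $(\bullet_{\SCP})$, $-\ad(I_{2,1})\bar X^T$ for $(\bullet_{\SCH})$, $-\ad(P_0)\bar X^T$ for $(\maltese_{\ST})$, $\ad(I_{*} P_0)\bar X$ for $(\bullet_{\SEA})$, and $\bar X$ for $(\maltese_{\SIA})$. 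Each of these is anti-holomorphic, involutory and preserves $\sl$. To promote reality from $\lambda = 1$ to all admissible $\lambda$, I would use that $\alpha^\lambda$ arose from $\alpha$ through the gauge $\di(\lambda,\lambda^{-1},1)$ and the substitution $Q\mapsto\lambda^{-3}Q$; a direct check then shows the two identities $V_0 = \hat\tau(U_0)$ and $V_1 = \hat\tau(U_{-1})$ (for instance, in the $\CH$ case $-\ad(I_{2,1})\overline{U_{-1}}^T = \overline{Q}e^{-\omega}E_{12} + e^{\omega/2}(E_{23}+E_{31}) = V_1$). Together with the conjugation $\d z\leftrightarrow\d\bar z$ and the fact that $1/\bar\lambda = \lambda$ on $S^1$ (respectively $\bar\lambda = \lambda$ on $\R^+$ in the $\maltese$-cases), these identities are exactly the loop-level reality condition $\tau$, the same relation recorded as $C = \tau(A)$, $D = \tau(B)$ in \eqref{eqloop}; hence $\alpha^\lambda\in\mathfrak{g}^{\hat\tau}$ for all admissible $\lambda$.

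The genuinely delicate point, and the one I expect to be the main obstacle, is the compatibility bookkeeping in the previous step: one must verify that the fixed intertwining conjugations used to standardize the definite cases (most visibly $\di(1,1,i)$ for $\CP$, and the factors of $i$ in the elliptic affine gauge) are simultaneously compatible with $\hat\sigma$ and with $\hat\tau$, so that after standardization $\hat\tau$ still commutes with $\hat\sigma$ and still cuts out the advertised real form of $\sl$. Once this is checked, the remainder reduces to the routine matrix identities above.
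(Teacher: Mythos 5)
Your proof is correct and follows essentially the route the paper intends: Theorem \ref{propalpha} is asserted there without an explicit argument, the implicit proof being exactly the inspection you carry out, and your diagonalization of $\hat{\sigma}$ (eigenspaces $\mathfrak{g}_0=\C\,\di(1,-1,0)$, $\mathfrak{g}_{-1}=\C(E_{13}+E_{32})\oplus\C E_{21}$, $\mathfrak{g}_1=\C E_{12}\oplus\C(E_{23}+E_{31})$, etc.) checks out. You are in fact more careful than the paper on one substantive point: in the $\CP$ case the displayed matrices \eqref{eq:alphalambda-origCP} have $U_{-1}$ containing $e^{\omega^{\SCPt}/2}(E_{13}-E_{32})\in\mathfrak{g}_2$ and $V_1$ containing $e^{\omega^{\SCPt}/2}(E_{23}-E_{31})\in\mathfrak{g}_4$, so the statement is not literally true for that frame and the constant gauge by $\di(1,1,i)$ you introduce is genuinely needed; since $\overline{\di(1,1,i)}=\di(1,1,i)^{-1}$, this gauge commutes with $\hat\tau(X)=-\overline{X}^T$ and so does not disturb the real-form condition, which settles the ``compatibility bookkeeping'' you flag at the end. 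The one imprecision is in the reality step: the identities $V_0=\hat\tau(U_0)$, $V_1=\hat\tau(U_{-1})$ are the correct formulation only for the conformal ($\bullet$) cases; for the asymptotic-line ($\maltese$) cases, where $u,v$ are independent real coordinates and $\tau(g)(\lambda)=\hat\tau(g(\bar\lambda))$, one must instead check that $\hat\tau$ fixes each of $U_{-1},U_0,V_0,V_1$ separately --- which does hold, e.g.\ $-\ad(\r{P})\,\overline{U_{-1}}^T=U_{-1}$ for $\CHI$ precisely because $Q^{\ST}$ is purely imaginary --- and your parenthetical about $\bar\lambda=\lambda$ on $\R^+$ shows you are aware of the distinction, so this is a matter of wording rather than a gap.
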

\begin{Remark} In the conformal case we have the following statements:
\begin{enumerate}
 \item It is an important feature here that  $\hat \sigma$ maps $\alpha^\lambda$ to $\alpha^\mu$, 
$ \mu = \lambda e^{2 \pi i /6} \in \mathfrak{g}^{\hat \tau}$ .

\item The automorphism  $\hat \sigma$  leaves invariant $\mathfrak{g}^{
 \hat \tau}.$ 

\item The automorphisms $\hat \sigma$ and $\hat \tau$ commute on $\mathfrak{g}.$  
\end{enumerate}
 \end{Remark}
The situation in the asymptotic line case is quite different from what we just remarked.
 \begin{Theorem}
 Assume we have an immersion $f$ of split real type with extended frame 
 $F^\lambda$ and Maurer-Cartan form $\alpha^\lambda$. Let $\hat \tau$ be an involutory anti-holomorphic automorphism of $\mathfrak{g}$ 
 which fixes $\alpha^\lambda$.
 Writing 
 \[
  \alpha^\lambda = \lambda^{-1} U_{-1} \d u + (U_0\d u + V_{0} \d v) + \lambda V_{1} \d v,
 \]
 it follows that $\hat \tau$ fixes $U_{-1} + U_0$ and  $V_{0} + V_1$.
 Let us assume that $\hat \tau$ actually fixes all $U_j$ and all $V_j$.
 And let us assume also that the Lie algebra generated by 
 \[
  \{U_{-1}(u,v), \;U_0(u,v),\; V_0(u,v), \;V_1(u,v) \;|\; (u,v) \in \D\}
 \]
 generates the Lie algebra $\mathfrak{g}^{\hat \tau}$.
 Then $\hat \tau$ and $\hat \sigma$ satisfy the following relation$:$
 \begin{equation}
 \hat \sigma \hat \tau \hat \sigma = \hat \tau
 \end{equation}
 on $\mathfrak g$.
 \end{Theorem}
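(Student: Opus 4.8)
The plan is to establish the relation $\hat\sigma\hat\tau\hat\sigma=\hat\tau$ first on a generating set of the real form $\mathfrak{g}^{\hat\tau}$, using only the $\hat\sigma$-grading and the antilinearity of $\hat\tau$, and then to upgrade this to an identity of maps on all of $\mathfrak{g}$. First I would record the immediate consequence of the reality assumption. Evaluating the extended Maurer--Cartan form at $\lambda=1$ gives $\alpha^{1}=(U_{-1}+U_0)\,\d u+(V_0+V_1)\,\d v$, and since $\hat\tau$ is a real-linear map of $\mathfrak{g}$ that does not act on the real $1$-forms $\d u,\d v$, the equality $\hat\tau(\alpha^{1})=\alpha^{1}$ separates into $\hat\tau(U_{-1}+U_0)=U_{-1}+U_0$ and $\hat\tau(V_0+V_1)=V_0+V_1$; this is the asserted ``it follows'' statement. (If one reads ``$\hat\tau$ fixes $\alpha^\lambda$'' as holding for every $\lambda\in\R^{+}$, then, since $\lambda$ is real while $\hat\tau$ is antilinear, matching the coefficients of $\lambda^{-1},\lambda^{0},\lambda^{1}$ already forces $\hat\tau$ to fix each $U_j$ and $V_j$; in any case I now invoke the standing hypothesis that $\hat\tau$ fixes all of $U_{-1},U_0,V_0,V_1$.)

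Next I would exploit the $\hat\sigma$-grading supplied by Theorem~\ref{propalpha}: $U_{-1}\in\mathfrak{g}_{-1}$, $U_0,V_0\in\mathfrak{g}_0$, $V_1\in\mathfrak{g}_1$, so $\hat\sigma$ acts on any homogeneous $X\in\mathfrak{g}_j$ by the scalar $\epsilon^{j}$. For such an $X$ that is moreover $\hat\tau$-fixed, antilinearity of $\hat\tau$ yields the one-line computation
\[
 \hat\sigma\hat\tau\hat\sigma(X)=\hat\sigma\hat\tau(\epsilon^{j}X)=\hat\sigma(\overline{\epsilon^{j}}\,X)=\overline{\epsilon^{j}}\,\epsilon^{j}X=|\epsilon^{j}|^{2}X=X=\hat\tau(X),
\]
where the scalar is $1$ because $\epsilon=e^{i\pi/3}$ is a root of unity. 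Applying this to $X=U_{-1},U_0,V_0,V_1$ shows that the two antilinear automorphisms $\hat\sigma\hat\tau\hat\sigma$ and $\hat\tau$ agree on every generator $U_{-1}(u,v),U_0(u,v),V_0(u,v),V_1(u,v)$ for all $(u,v)\in\D$.

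Finally I would promote agreement on generators to an identity on $\mathfrak{g}$. Both $\hat\sigma\hat\tau\hat\sigma$ and $\hat\tau$ are antilinear Lie-algebra automorphisms, and a Lie homomorphism is determined by its values on any generating set, so the two maps coincide on the Lie subalgebra generated by the $U_j$'s and $V_j$'s, which by hypothesis is exactly $\mathfrak{g}^{\hat\tau}$; hence $\hat\sigma\hat\tau\hat\sigma=\hat\tau$ on $\mathfrak{g}^{\hat\tau}$. The step I expect to be the main (indeed the only genuinely structural) obstacle is the passage from the real form to the whole algebra: because $\hat\tau$ is an antilinear involution one has $\mathfrak{g}=\mathfrak{g}^{\hat\tau}\oplus i\,\mathfrak{g}^{\hat\tau}$ as a real vector space, and any antilinear map is completely determined by its restriction to $\mathfrak{g}^{\hat\tau}$ (for $Z=X+iY$ with $X,Y\in\mathfrak{g}^{\hat\tau}$ one has $\phi(Z)=\phi(X)-i\phi(Y)$). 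Since $\hat\sigma\hat\tau\hat\sigma$ and $\hat\tau$ are both antilinear and agree on $\mathfrak{g}^{\hat\tau}$, they agree on all of $\mathfrak{g}$, giving the claimed relation. As a sanity check on the sign bookkeeping, the resulting relation is equivalent to $\hat\tau\hat\sigma\hat\tau=\hat\sigma^{-1}$, the dihedral-type relation that correctly contrasts with the commuting relation $\hat\sigma\hat\tau=\hat\tau\hat\sigma$ of the conformal case noted in the preceding Remark.
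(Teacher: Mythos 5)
Your proof is correct and follows essentially the same route as the paper's: the heart of both arguments is the one-line computation $\hat\sigma\hat\tau\hat\sigma(X)=\overline{\epsilon^{\,j}}\,\epsilon^{\,j}\,\hat\tau(X)=\hat\tau(X)$ on $\hat\sigma$-homogeneous elements, exploiting the antilinearity of $\hat\tau$ and $|\epsilon|=1$. The only organizational difference is that the paper first asserts that $\hat\tau$ preserves each eigenspace $\mathfrak{g}_j$ and runs the computation on arbitrary eigenvectors, whereas you run it only on the $\hat\tau$-fixed generators and then propagate to all of $\mathfrak{g}$ via the homomorphism property and the decomposition $\mathfrak{g}=\mathfrak{g}^{\hat\tau}\oplus i\,\mathfrak{g}^{\hat\tau}$ --- a more explicit treatment of a step the paper leaves implicit.
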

 \begin{proof}
 By our assumptions we obtain that $\hat \tau$ leaves each eigenspace of 
 $\hat \sigma$ in $\mathfrak{g}$  invariant.
 Hence $\sigma \circ \hat \tau \circ \hat \sigma(X_j) = \hat \sigma \circ \hat \tau (\epsilon^j X_j) =
 \hat  \sigma ( \epsilon^{-j}  \hat \tau(X_j)) =  \epsilon^{-j} \hat \sigma (\hat \tau(X_j)) = \hat \tau(X_j)$ for all eigenvectors  $X_j$ of $\hat \sigma.$
 \end{proof}
 More details will be explained in the following section of this paper.
 An extended frame $F^\lambda$ for which the Maurer-Cartan form 
 $\alpha^\lambda$ satisfies \eqref{primitive} and \eqref{realform}  
 will be called \textit{primitive} relative to $\hat \sigma$ and $\hat \tau$.

\begin{Corollary}
 In all our special surface classes  the extended frame is \it{primitive } 
 relative to $\hat \sigma$ and the real form $($anti-holomorphic$)$ automorphism 
 $\hat \tau$ chosen  for the  special surface class.
\end{Corollary}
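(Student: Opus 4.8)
The plan is to read off the Corollary directly from Theorem~\ref{propalpha} together with the definition of primitivity, since the Corollary is precisely the repackaging of that theorem into the language introduced just above it. Recall that an extended frame $F^\lambda$ solving \eqref{eq:extendedframe} is by definition \emph{primitive} relative to $\hat\sigma$ and $\hat\tau$ exactly when its Maurer--Cartan form $\alpha^\lambda=(F^\lambda)^{-1}\d F^\lambda$ satisfies the two conditions \eqref{primitive} and \eqref{realform}: the $\hat\sigma$-grading condition placing $\alpha^\lambda$ in $\mathfrak{g}_{-1}\oplus\mathfrak{g}_0\oplus\mathfrak{g}_1$ (which is exactly the statement that $\alpha^\lambda$ takes values in the twisted loop algebra $\lsl$ of type $A_2^{(2)}$), and the reality condition $\alpha^\lambda\in\mathfrak{g}^{\hat\tau}$. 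Theorem~\ref{propalpha} asserts exactly these two facts for every one of the five special surface classes. Hence the argument is simply: invoke Theorem~\ref{propalpha} to obtain both \eqref{primitive} and \eqref{realform} for the extended frame at hand, and then quote the definition to conclude primitivity.

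The one point deserving a genuine check is that the anti-holomorphic involution $\hat\tau$ furnished by Theorem~\ref{propalpha} coincides, in each case, with the real-form involution \emph{chosen for that geometry}, i.e. with the infinitesimal version of the corresponding $\tau$ from the five-fold classification in the Introduction. I would verify this by matching the real group in which the geometric frame lives against the fixed-point subalgebra $\mathfrak{g}^{\hat\tau}$. Concretely, the coordinate frame, and hence $F^\lambda$, takes values in $\SU$ for $(\bullet_{\SCP})$, in $\SUI$ for $(\bullet_{\SCH})$, in $\SUIT$ for $(\maltese_{\ST})$, in $\widetilde{\SLR}^{\pm}$ for $(\bullet_{\SEA})$, and in $\SLR$ for $(\maltese_{\SIA})$; the Lie algebra of each of these real groups is precisely $\mathfrak{g}^{\hat\tau}$ for the $\hat\tau$ attached to the corresponding row of the table, namely $\hat\tau(X)=-\overline{X}^{T}$, $-\ad(I_{2,1})\overline{X}^{T}$, $-\ad(\r{P})\overline{X}^{T}$, $\ad(I_*\r{P})\overline{X}$, and $\overline{X}$, respectively. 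Since $\alpha^\lambda=(F^\lambda)^{-1}\d F^\lambda$ inherits its values from the Lie algebra of that group, it is automatically fixed by this $\hat\tau$, which is \eqref{realform}, and the $\hat\tau$ so obtained is the one named in the Corollary.

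Because Theorem~\ref{propalpha} may be assumed, there is no substantive obstacle: the Corollary is the formal consequence of that theorem and the definition, and the only work is bookkeeping --- confirming the eigenspace assignments $U_{-1}\in\mathfrak{g}_{-1}$, $V_1\in\mathfrak{g}_1$, $U_0,V_0\in\mathfrak{g}_0$ supplied by the theorem, and pairing each geometry with its $\hat\tau$ as above. For completeness I would also note that the extended frame is unambiguous: by \eqref{eq:extendedframe} it is the unique solution normalized by $F^\lambda(p_0)=\id$, so the phrase ``the extended frame'' is well-defined, and primitivity is a property of the form $\alpha^\lambda$ rather than of the chosen normalization.
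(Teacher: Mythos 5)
Your proposal is correct and matches the paper's (implicit) argument: the Corollary is stated without proof precisely because it is the immediate combination of Theorem~\ref{propalpha} (which supplies both \eqref{primitive} and \eqref{realform}) with the definition of primitivity given just above it. Your additional bookkeeping --- matching each geometry's real frame group ($\SU$, $\SUI$, $\SUIT$, $\widetilde\SLR^{\pm}$, $\SLR$) to the fixed-point algebra of the corresponding $\hat\tau$ --- is exactly the identification the paper records in Theorem~\ref{classify} of Section~\ref{sc:real}, so nothing is missing.
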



\subsection{The loop group method for primitive extended frames}
 It is most convenient to explain the procedure for the conformal case 
 and for the asymptotic line case separately.

Let $\hat \sigma$ be as above and let $\hat \tau$ be the anti-holomorphic involutory automorphism
associated with the chosen surface class.
Let 

\[
 \mathfrak{g} = \sl, \quad  G =  \SL.
\]

By $G^{\hat \tau}$ and $\mathfrak{g}^{\hat \tau}$ we denote the corresponding fixed point group and algebra respectively. Actually, for $G^{\hat \tau}$ one could also use any Lie group between
$G^{\hat \tau}$ and its connected component.

 From what was said above, the extended frame  $F^\lambda$ of 
 an immersion of our special class is contained in 
 $G^{\hat \tau}$. The corresponding Maurer-Cartan form is contained in 
 $\mathfrak{g}^{\hat \tau}$.

 By the form of  $(F^\lambda)^{-1} \d F^\lambda$ we infer that 
 all the loop matrices  associated with geometric quantities 
 are actually defined for all $\lambda \in \C^{\times}$.
 In particular, all extended frames are defined on $S^1$.
 However, geometric interpretations are usually only possible for $\lambda \in S^1$ in the case of conformal case or $\l \in \R^+$ 
 in the case of asymptotic line case.

 Next one does no longer read the extended frame
 \[
  F^\lambda(a, b) = F (a, b,\lambda)
 \]
 as a family of frames, 
 parametrized by $\lambda \in S^1$, but as a function of $z$ into some loop group.
Here are the basic definitions:
\begin{enumerate}
\item The loop group of a Lie group $G$ is 
\[
       \Lambda G = \{ g: S^1 \rightarrow G \}.
\]
 Considering $G$ as a matrix group we use the Wiener norm on $S^1$ 
 and thus has a Banach Lie group structure on $\Lambda G $.
 Since all our geometric frames are defined for  $\lambda \in C^{\times}$, 
 we can apply the usual loop group techniques (see, for example 
 \cite[Theorem 4.2]{To}).

\item The \textit{plus} subgroup:
\[
       \Lambda^+G = \left\{g \in \Lambda G\;\Big|\;
 \begin{array}{l}
\mbox{$g$ as  a holomorphic extension to the open unit disk}   \\
 \mbox{and $g^{-1}$ has the same property.} 
 \end{array}
 \right\},
      \]
 and the normalized plus subgroup:
\[
 \Lambda_*^+G = \{ g \in  \Lambda^+G \;|\; g(0) = \id \}.
\]

\item The \textit{minus} subgroup:
 \[
  \Lambda^-G = \left\{ g \in \Lambda G\; \Big| \; 
 \begin{array}{l}
 \mbox{$g$ has  a holomorphic extension to the open  upper } \\
 \mbox{unit disk in $\C P^1$  and $g^{-1}$ has the same property.}
 \end{array}
\right\},
 \]
 and the normalized minus subgroup:
\[
 \Lambda_*^-G = \{ g \in  \Lambda^-G \; | \;g(\infty) = I \}.
\]
\end{enumerate}
 We now define automorphisms $\sigma$ and $\tau$ of $\Lambda G$
 as natural extensions of $\hat \sigma$ and $\hat \tau$ of $G$:
\begin{equation}\label{eq:sigmatau}
 \sigma (g)(\l) = \hat \sigma (g (\epsilon^{-1} \l)), \quad 
\tau (g) (\l)= \hat \tau ( g( B( \bar{\lambda})) ,
 \end{equation}
 where $B(\lambda) = \lambda ^{\pm 1}$ and $-1$ is 
 taken in the case of conformal type and 
 $+1$ is taken in the case of asymptotic line type.

\begin{enumerate}
\item[(4)]  The \textit{real} subgroup 
 \[
  \Lambda G^{\tau} = \{g \in \Lambda G\;|\; \mbox{$\tau (g)(\lambda) = g(\lambda)$.}\}.
 \]
\end{enumerate}
We will actually always use ``twisted subgroups'' of the groups above.
First we have
\[
 \Lambda G_{\sigma }= \{ g \in \Lambda G\;|\;  
 \mbox{$\sigma (g)(\lambda) = g(\lambda)$.} \}.
\]
The other twisted groups are defined analogously, like 
\[
 \Lambda_*^+G _{\sigma} = 
\Lambda_*^+G  \cap \Lambda G_{\sigma}.
\]
Finally, we actually use the twisted real loop group:
\[
 \Lambda G_{\sigma }^{\tau}= \{ g \in \Lambda G_{\sigma}\;|\;  
 \tau (g)(\lambda) = g(\lambda)  \}.
\]
\begin{Remark}
 The twisted real loop group may be defined as 
\begin{equation}\label{eq:Gtausigma}
 \Lambda G_{\sigma }^{\tau}=  \Lambda G_{\sigma } \cap 
 \Lambda G^{\tau},
\end{equation}
 if $\sigma$ and $\tau$ commute, these are  the cases of $(\bullet_*)$ in Section 
 \ref{subsc:surfext}, and if $\sigma$ and $\tau$ do not commute, 
 these are the cases of $(\maltese_*)$ in Section 
 \ref{subsc:surfext}, then 
 $\Lambda G_{\sigma }^{\tau}$
 cannot be defined as in \eqref{eq:Gtausigma}.
\end{Remark}

\subsubsection{The loop group method for the conformal case}
 Let us fix a special surface class of conformal type.
 To understand the construction procedure mentioned above one considers next
 again an immersion of conformal type $f$ with primitive extended frame  
 $F$ relative to $\sigma$ and $\tau$ as above.

Then consider the linear ordinary differential equation in $\bar z$
\[
 \partial_{\bar z} L_{+}(z, \bar z, \l) = L_{+}(z, \bar z, \l) 
 \left(V_0(z, \bar z) + \l V_1(z, \bar z)\right), \quad 
  L_+(z_*, \bar z_*, \l) = \id.
\]
 Here we use the $\d \bar z$-coefficients in 
 $F^{-1} \d F = \alpha = \l^{-1}U_{-1} \d z + U_0 \d z + V_0 \d 
 \bar z + \l V_1 \d \bar z $ and consider $z$ and $\l$ as parameters of the 
 differential equation. Note that $U_0(z, \bar z) + \l V_1(z, \bar z)$
 takes values in the Lie algebra of $\Lambda^+ G_{\sigma}$, 
 thus $L_+(z, \bar z, \l)$ takes values in $\Lambda^+ G_{\sigma}$. On the 
 one hand, the primitive extended frame $F$ is also a solution of the
 above differential  equation, thus these two solutions should 
 coincide up to 
 an initial condition, that is, there exists $C(z, \l)$ which is 
 holomorphic in $z \in \D$ and $\l \in \C^{\times}$ such that 
 \begin{equation}\label{holframe}
 F(z, \bar z, \l) = C(z, \l) L_+(z, \bar z, \l)
 \end{equation}
 holds.
 
 Such a decomposition is always possible, 
 since $S^2$ does not occur in this paper as domain of a harmonic map.
 and defines a \textit{holomorphic potential} $\eta$ for $f$ by the formula
\[
 \eta = C^{-1} \d C.
\]
 The potential $\eta$ takes the form
\begin{equation} \label{eta}
\eta = \lambda^{-1} \eta_{-1}(z) \d z + \eta_{0}(z) \d z +\lambda^{1} \eta_{1}(z) \d z
+\lambda^{2} \eta_{2}(z) \d z + \cdots 
\end{equation}
We would like to emphasize:
\begin{enumerate}
\item All coefficient functions $\eta_j(z)$ are holomorphic on $\D$.

\item All  $\eta_j$ are contained in $\mathfrak{g}_j$, where 
 $\mathfrak g_j$ is defined in \eqref{eq:eigenspaces}.
\end{enumerate}
This explains the procedure to obtain a holomorphic potential from a primitive harmonic map.
The fortunate point is that this procedure can be reversed.

\begin{Theorem}[The loop group procedure for surfaces of conformal type]
Let $G, {\hat \sigma}$ and $\hat \tau $ as above.
Let  $f$ be an immersion of conformal type, $F(z, \bar z, \l) = 
 F^\lambda (z, \bar z)$
a primitive extended frame relative to $\hat \sigma$ and $\hat \tau$.
 Define $C$ by $F(z, \bar z, \lambda) = C(z,\lambda) \cdot L_+(z, \bar z, \lambda)$ 
 and put $\eta = C^{-1} \d C$, called a 
 {\rm holomorphic potential} for $f$.
 Then $\eta$ has the form stated in \eqref{eta}, the coefficient 
 functions $\eta_j$ of $\eta$ are holomorphic on $\D$ and we have 
 $\eta_j \in {\mathfrak{g}}_{j}$.

 Conversely, consider any holomorphic $1$-form $\eta$ satisfying the three conditions just listed for $\eta$.
 Then solve the ODE $\d C = C \eta$ on $\D$ with $C \in \Lambda G_{\sigma}$.
 Next write $C = F \cdot V_+$
 with $F \in \Lambda G^\tau_{\sigma}$ and $V_+ \in \Lambda ^+G_{{\sigma}} $.
 Then $F^{\l}(z, \bar z) = F(z, \bar z, \l)$ is the primitive extended frame 
 of some immersion $f$ of the class of surfaces under consideration.
\end{Theorem}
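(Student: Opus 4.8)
The plan is to prove the two directions by quite different means: the forward direction (frame $\Rightarrow$ potential) is a direct integration argument, while the converse (potential $\Rightarrow$ frame) rests on the twisted real Iwasawa splitting. First I would treat the forward direction. Starting from the primitive extended frame with $F^{-1}\d F = \lambda^{-1}U_{-1}\,\d z + U_0\,\d z + V_0\,\d\bar z + \lambda V_1\,\d\bar z$, I note that the coefficient $V_0 + \lambda V_1$ of the $\bar z$-equation lies in $\Lie(\Lambda^+ G_\sigma)$ because $V_0 \in \fg_0$ and $V_1 \in \fg_1$; hence the solution $L_+$ of $\partial_{\bar z} L_+ = L_+(V_0 + \lambda V_1)$ with $L_+(z_*,\bar z_*,\lambda) = \id$ stays in $\Lambda^+ G_\sigma$. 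Since $F$ itself solves this $\bar z$-ODE, the two solutions differ by a factor $C = C(z,\lambda)$ independent of $\bar z$, giving $F = C\,L_+$ with $C$ holomorphic in $z$ on the contractible $\D$ (no harmonic map from $S^2$ intervenes, so the splitting is globally unobstructed). Then $\eta := C^{-1}\d C$ is a holomorphic $1$-form; as $C \in \Lambda G_\sigma$ one has $\eta \in \Lambda\fg_\sigma$, so each $\eta_j \in \fg_j$, and the computation $\eta = \bigl[L_+(\lambda^{-1}U_{-1}+U_0)L_+^{-1} - (L_+)_z L_+^{-1}\bigr]\,\d z$ shows the lowest power is exactly $\lambda^{-1}$, yielding the expansion \eqref{eta}.

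For the converse I would first integrate $\d C = C\eta$ with $C(z_*,\lambda)=\id$; since $\eta$ is holomorphic in $z$ and $\Lambda\fg_\sigma$-valued, $C$ is holomorphic in $z$ and lies in $\Lambda G_\sigma$. The decisive step is then to apply the twisted real Iwasawa decomposition $C = F\cdot V_+$ with $F \in \Lambda G^\tau_\sigma$ and $V_+ \in \Lambda^+ G_\sigma$, and to set $F^\lambda := F$. Reality and $\sigma$-equivariance of $F^{-1}\d F$ are then inherited from $F \in \Lambda G^\tau_\sigma$; what must still be checked is that $F^{-1}\d F$ has the primitive shape \eqref{primitive}.

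For the degree truncation I would compute the two coefficients directly from $F = C V_+^{-1}$. Because $C$ is holomorphic, $C_{\bar z} = 0$, so $F^{-1}F_{\bar z} = -(V_+)_{\bar z}V_+^{-1}$ lies in $\Lie(\Lambda^+ G_\sigma)$ and carries only non-negative powers of $\lambda$; on the other hand $F^{-1}F_z = V_+\bigl(C^{-1}C_z\bigr)V_+^{-1} - (V_+)_z V_+^{-1}$ has lowest power $\lambda^{-1}$. The $\tau$-reality for the conformal type inverts $\lambda$ (here $\tau(g)(\lambda) = \hat\tau(g(1/\bar\lambda))$) and interchanges the $\d z$- and $\d\bar z$-coefficients; imposing it on $F^{-1}\d F$ forces the $\bar z$-part to have powers in $\{0,1\}$ and the $z$-part to have powers in $\{-1,0\}$. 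Together with the $\sigma$-grading this gives $F^{-1}\d F = \lambda^{-1}U_{-1}\,\d z + \alpha_0 + \lambda V_1\,\d\bar z$ with $U_{-1}\in\fg_{-1}$ and $V_1\in\fg_1$, which is exactly \eqref{primitive}. Flatness of $\d + F^{-1}\d F$ is automatic from the existence of $F$, so by the flat-connection characterization (Theorems~\ref{thm:flatconnectionsCP}, \ref{thm:flatconnectionsCH} and \ref{thm:flatconnectionsDA}, according to the class) $F^\lambda$ is the primitive extended frame of an immersion $f$ of the class under consideration.

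The hard part will be the twisted real Iwasawa decomposition $C = F V_+$. For the compact-type case $\bullet_{\SCP}$ this is a global diffeomorphism, so the converse holds on all of $\D$; but for the non-compact real forms $\bullet_{\SCH}$ and $\bullet_{\SEA}$ the decomposition is valid only on an open (``big'') cell of $\Lambda G_\sigma$, so the converse statement is intrinsically local and one may need to shrink $\D$ to the preimage of that cell. Verifying that this cell is open and that $C(z,\lambda)$ lands in it for $z$ near $z_*$ --- so that $F$ and $V_+$ depend real-analytically on $(z,\bar z)$ --- is where the genuine analytic content lies; the remaining verifications are bookkeeping with the $\hat\sigma$-eigenspace grading and the $\hat\tau$-reality.
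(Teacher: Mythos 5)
Your proposal follows essentially the same route as the paper: the forward direction via the $\bar z$-ODE for $L_+$ and the factorization $F = C\,L_+$ (which is exactly how the paper derives the holomorphic potential in the text preceding the theorem), and the converse via the twisted real Iwasawa splitting $C = F\,V_+$, with the degree truncation of $F^{-1}\d F$ recovered from holomorphy of $C$, membership of $V_+$ in $\Lambda^+ G_\sigma$, and the $\hat\tau$-reality with $\lambda \mapsto 1/\bar\lambda$. The only cosmetic difference is in how the failure locus of the Iwasawa decomposition is described --- the paper's remark asserts it is a discrete subset of $\D$, while you phrase the converse as intrinsically local --- but this does not affect the correctness of the argument.
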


\begin{Remark}
\mbox{}
\begin{enumerate}
 \item  In the the procedure from $f$ to $\eta$ the decomposition 
$F(z, \bar z, \lambda) = C(z,\lambda) \cdot L_+(z,\bar z, \lambda)$
is always possible.
 In the converse procedure the decomposition (usually called 
 ``Iwasawa decomposition'', (see \cite{PS, Kell})
 is not always possible. But the set of points, where such a 
 decomposition is not possible is discrete in $\D$.

\item In the conformal case all geometric quantities like frame, potential etc. are actually real analytic on $\D$ and holomorphic in $\lambda \in C^{\times}$.

 \item In the conformal case we can start from a real Lie algebra $\mathfrak{q}$, say the one generated by
the Maurer-Cartan form $\alpha(z, \bar{z}), z \in \D$ of the coordinate frame of some immersion of conformal type. This always includes an automorphism $\kappa$ of this Lie algebra. Then, by carrying out the loop group procedure, we naturally and unavoidably  need to use the complexified Lie algebra $\mathfrak{q}^\C$. When extending the automorphism 
$\kappa$ complex linear to  $\mathfrak{q}^\C$ and defining $\rho$ as the anti-holomorphic automorphism of  $\mathfrak{q}^\C$ which defines  $\mathfrak{q}$ inside  
$\mathfrak{q}^\C,$
then we naturally obtain that $\kappa$ and $\rho$ commute. 
Hence immersions of conformal type always have to do with a complex linear automorphism and an anti-holomorphic involutory automorphism which commute. (Also see the Remark after Theorem \ref{propalpha}.)
\end{enumerate}
\end{Remark}

\subsubsection{The loop group method for the asymptotic line case}
The loop group method for this case looks at the outset very different.
And indeed, there are remarkable differences.
Since the scalar second order equation is not elliptic, solutions of low degree of differentiability can occur.
In this paper we always use only  functions which are as often differentiable as is convenient.
Since the loop parameter is for geometric quantities real now, we do not need to use the complex Lie group $G$ nor $\Lambda G$ etc., but always $G$ replaced by $G^\tau$, the real Lie group which is defined by $\tau$ and which is characteristic for the frame.

The main difference in procedure occurs at equation (\ref{holframe}). Since the coordinates $u$ and $v$ are on an equal basis (opposite to $z$ and $\bar{z}$) we need to carry out the splitting twice
\begin{equation} \label{potentialdu}
F(u,v, \lambda) = C_1(u,\lambda) \cdot L_+(u,v, \lambda), \quad \quad 
F(u,v, \lambda) = C_2(v,\lambda) \cdot L_-(u,v, \lambda).
\end{equation}
 Note that $L_+(u,v, \lambda)$ can be found by solving the differential 
 equation 

 \[
\partial_v L_+(u,v, \lambda)= L_+(u,v, \lambda)
 \left(V_0(u,v) + \l V_1(u, v)\right), \quad 
 L_+(u_*,v_*, \l) = \id.
\] 
 Here we use the coefficients in 
 $F^{-1} \d F = \alpha = \l^{-1}U_{-1} \d u + U_0 \d u + V_0 \d 
 \bar v + \l V_1 \d \bar v $ and consider $u$ and $\l$ as parameters.
 Since $V_0(u,v) + \l V_1(u, v)$ is given and smooth in 
 $u$ and in $v$,  also  
 $L_+(u,v, \lambda)$ is smooth in 
 $u$ and in $v$. Moreover, $V_0 + \l V_1$ takes 
 values in the Lie algebra of $\Lambda^+ G_{\sigma}$, thus $L_+$ takes values in 
 $\Lambda^+ G_{\sigma}$.As a consequence, there exists $C_1(u,\lambda)$ only depends 
 on $u$ and is smooth in $u$ and holomorphic in $ \lambda \in \C^{\times}$ such 
 that first equation in \eqref{potentialdu} holds.

 The argument for the second equation is, mutatis mutandis, the same.
 It is also important to observe that the two equations imply:
 \begin{equation} \label{matchinguandv}
  C_1(u,\lambda)^{-1} C_2(v,\lambda) = L_+(u,v, \lambda) L_-(u,v, \lambda)^{-1}.
 \end{equation}
 From this discussion we obtain a pair of potentials, 
 \[
  \eta_1 =  C_1(u,\lambda)^{-1} \partial_u C_1(u,\lambda)\d u
 \quad \mbox{and} \quad 
 \eta_2 =   C_2(v,\lambda)^{-1} \partial_v  C_2(v,\lambda) \d v.
 \]
 Analogous to the conformal case we also need to know what form the potentials $\eta_1$  and $\eta_2$ take.
\begin{align} \label{eta1}
\eta_1 &= \lambda^{-1} \eta_{1,-1}(u) \d u + 
\lambda^{0} \eta_{1,0}(u) \d 
 u +\lambda^{1} \eta_{1, 1}(u) \d u
+\lambda^{2} \eta_{1,2}(u) \d u + \cdots,  \\
\eta_ 2&= \lambda \eta_{2,1}(v) \d v + \lambda^{0} \eta_{2,0}(v) \d v +\lambda^{-1} \eta_{2, -1}(v) \d v
+\lambda^{-2} \eta_{2,-2}(v) \d v + \cdots. 
\label{eta2}
\end{align}
We would like to emphasize:
\begin{enumerate}
\item All coefficient functions $\eta_{m,j} \;(j =1, 2)$ are smooth 
 on some interval $\D_j \subset \R$.
\item All  the coefficient functions $\eta_{m,j}$ are contained in 
${\mathfrak{g}}_j^{\hat \tau}$.
\end{enumerate}
Note that here ${\mathfrak{g}}_j^{\hat \tau}$ are defined as
  \[
  \mathfrak{g}_j^{\hat \tau} := \mathfrak{g}^{\hat \tau} \cap \mathfrak{g}_j,
 \]  
 where $\mathfrak{g}_j$ is the eigenspace defined in \eqref{eq:eigenspaces}.

 As in the conformal case, one can also reverse the procedure.
 So let us start from two potentials $\eta_1(u,\lambda)$ 
 and $\eta_2(v,\lambda)$ satisfying the three conditions listed above.

 Next solve the pair of ODEs 
 \[
  \eta_1 =  C_1(u,\lambda)^{-1} \partial_u  
 C_1(u,\lambda)\d u\quad \mbox{and} \quad 
 \eta_2 =  C_2(v,\lambda)^{-1} \partial_v  
 C_2(v,\lambda) \d v
 \]
 for $C_1(u,\lambda)$ and $C_2(v,\lambda)$ with initial 
 conditions $C_1 (u_*, \l) = C_2(v_*, \l) = \id$.

 Next let us solve the equation
\begin{equation} \label{matchinguandvcon}
 C_1(u,\lambda)^{-1} C_2(v,\lambda) =
 L_+(u,v, \lambda)L_-(u,v, \lambda)^{-1}.
 \end{equation}
 Since $L_+(u,v, \lambda)$ and $L_-(u,v, \lambda)$ are in
 $\Lambda^+ G^\tau_\sigma$ and $\Lambda^- G^\tau_\sigma$ 
 respectively, equation \eqref{matchinguandvcon} is a 
 ``Birkhoff decomposition'' for $\lambda \in S^1$, see \cite{PS, Kell}.
\begin{Remark} Since, in general, the Birkhoff decomposition can not be carried 
 out for any loop matrices, there will be points, maybe curves, 
 where the  $L_{\pm} (u,v, \lambda) $ are singular.
\end{Remark}
 But away from singularities (\ref{matchinguandvcon}) 
 implies that there exists a matrix function $W(u,v,\lambda)$ satisfying
\begin{equation}
W(u,v,\lambda) =  C_1(u,\lambda) L_+(u,v, \lambda)   = 
C_2(v,\lambda) L_-(u,v, \lambda) .
\end{equation}

\begin{Theorem}[The loop group procedure for surfaces of asymptotic line  type]
 Let $G, \hat \sigma$ and $\hat \tau $ as above.
 Let  $f$ be an immersion of asymptotic line type, $F(u, v, \l) = 
 F^\lambda (u, v)$
 a primitive extended frame relative to $\hat \sigma$ and $\hat \tau$.
 Define $C_1$ and $C_2$ by $F(u, v, \lambda) = C_1(u,\lambda) 
 \cdot L_+(u, v, \lambda)$  and $F(u, v, \lambda) = C_2(v,\lambda) 
 \cdot L_-(u, v, \lambda)$ and put $\eta_i = C_i^{-1} \d C_i
 \; (i = 1,2)$, called a 
 {\rm pair of potential} for $f$.
 Then $\eta_i$ has the form stated in \eqref{eta1} and \eqref{eta2}, 
 the coefficient 
 functions $\eta_{i, j}$ of $\eta_i$ depends only on one variable 
 and we have  $\eta_{i, j} \in {\mathfrak{g}}_{j}^{\hat \tau}$.

 Conversely, consider any pair of $1$-forms $(\eta_1, \eta_2)$ satisfying the three conditions just listed for $\eta_i \; (i=1, 2)$.
 Then solve the ODEs $\d C_i = C_i \eta_i$ on $\D_i \subset \R$ with 
 $C_i \in \Lambda G_{\sigma}^{\tau}$.
 Next write $C_1^{-1} C_2   = L_+ L_-$ with $W = C_1 L_+ = C_2 L_-$
 with $L_{\pm } \in \Lambda ^{\pm}G_{{\sigma}}^{\tau}$. 
 Then there exist a gauge $F_0 \in G_0^{\hat \tau}$\footnote{
 $G_0^{\hat \tau} = \{ g \in G\;|\; \hat \sigma (g) = g\;\; \mbox{and}\;\; g \in G^{\hat \tau}\}.$} such that 
 $F^{\l}(u, v) = F(u, v, \l) F_0$ takes values in 
 $\Lambda G_{\sigma}^{\tau}$ is the primitive extended frame 
 of some immersion $f$ of the class of surfaces under consideration.
\end{Theorem}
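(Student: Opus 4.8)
The plan is to prove the two directions separately, recording first two facts established earlier. Because $u,v$ are real and $\hat\tau$ is anti-linear, the identity $\alpha^\lambda\in\mathfrak{g}^{\hat\tau}$ forces $\hat\tau$ to fix each of $U_{-1},U_0,V_0,V_1$ (expand $\hat\tau(U(\bar\lambda))=U(\lambda)$ in powers of $\lambda$ for $U=\lambda^{-1}U_{-1}+U_0$, and likewise for $V=V_0+\lambda V_1$). Moreover $\hat\tau$ preserves each eigenspace $\mathfrak{g}_j$: if $X\in\mathfrak{g}_j$, so $\hat\sigma X=\epsilon^{j}X$ with $\epsilon=e^{i\pi/3}$, then the relation $\hat\sigma\hat\tau\hat\sigma=\hat\tau$ gives $\hat\sigma(\hat\tau X)=\hat\tau(\hat\sigma^{-1}X)=\epsilon^{j}\,\hat\tau X$ by anti-linearity, so $\hat\tau X\in\mathfrak{g}_j$ and $\mathfrak{g}_j^{\hat\tau}$ is genuinely an intersection.

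For the forward direction I would start from $L_+$, the solution of $\partial_v L_+=L_+(V_0+\lambda V_1)$ with $L_+(u_*,v_*,\lambda)=\id$, and $C_1=FL_+^{-1}$. Differentiating in $v$ and using $\partial_v F=F(V_0+\lambda V_1)$ shows $\partial_v C_1=0$, so $C_1=C_1(u)$ and $\eta_1=C_1^{-1}\partial_uC_1\,\d u$ depends only on $u$. Writing $\eta_1=\bigl(L_+(\lambda^{-1}U_{-1}+U_0)L_+^{-1}-(\partial_uL_+)L_+^{-1}\bigr)\d u$ and using $L_+\in\Lambda^+G_\sigma$ shows that no power below $\lambda^{-1}$ occurs, which is the shape \eqref{eta1}. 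Since $V_0,V_1$ are $\hat\tau$-fixed, applying $\tau$ to the defining ODE of $L_+$ shows $\tau(L_+)$ solves the same initial value problem, so $L_+\in\Lambda^+G_\sigma^\tau$, hence $C_1\in\Lambda G_\sigma^\tau$ and, with the $\sigma$-grading, $\eta_{1,j}\in\mathfrak{g}_j^{\hat\tau}$. The statement for $\eta_2$ follows by the same computation with $(u,v)$ and $(\Lambda^+,\Lambda^-)$ interchanged, giving \eqref{eta2}.

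For the converse I would first solve $\d C_i=C_i\eta_i$, $C_i(*,\lambda)=\id$; since $\eta_i\in\Lambda\mathfrak{g}_\sigma^\tau$ the solutions stay in $\Lambda G_\sigma^\tau$. Away from the singular locus I would perform the twisted real Birkhoff splitting \eqref{matchinguandvcon}, $C_1^{-1}C_2=L_+L_-^{-1}$ with $L_\pm\in\Lambda^{\pm}G_\sigma^\tau$, and set $W=C_1L_+=C_2L_-\in\Lambda G_\sigma^\tau$. The heart of the argument is to compute $W^{-1}\d W$ from both factorizations. From $W=C_2L_-$, and since $C_2$ is independent of $u$, one gets $W^{-1}\partial_uW=L_-^{-1}\partial_uL_-$, which carries only nonpositive powers of $\lambda$; from $W=C_1L_+$ with $L_+\in\Lambda^+$ one gets that $W^{-1}\partial_uW$ has no power below $\lambda^{-1}$. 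Hence $W^{-1}\partial_uW=\lambda^{-1}U_{-1}+U_0$, and symmetrically $W^{-1}\partial_vW=V_0+\lambda V_1$. The $\sigma$-grading and $\tau$-reality of $W$ then place $U_j,V_j\in\mathfrak{g}_j^{\hat\tau}$, so $W^{-1}\d W$ is exactly a primitive form \eqref{primitive}; being the logarithmic derivative of a genuine map it is automatically flat for every $\lambda$.

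To finish I would normalize $W$ into a geometric coordinate frame: gauging by a suitable constant $F_0\in G_0^{\hat\tau}$ (a simultaneous fixed point of $\hat\sigma$ and $\hat\tau$, exactly as in the proof of Theorem~\ref{thm:flatconnectionsDA}, where $\di(e^{i\beta},e^{-i\beta},1)$ scales the leading entry to a real positive $e^{\psi/2}$) acts by $\ad(F_0^{-1})$ within each $\mathfrak{g}_j^{\hat\tau}$ and brings $U_{-1}$ into the standard shape of the relevant flat-connection theorem; since $F_0$ stabilizes the base line $\C e_3$, the underlying immersion is unchanged. Then $F^\lambda=WF_0$ is a primitive extended frame and, by the fundamental theorems (Theorems~\ref{thm:fundCHI} and \ref{thm:BlaschkeIA}) together with their flat-connection counterparts (Theorems~\ref{thm:flatconnectionsT} and \ref{thm:flatconnectionsAf}), it frames an immersion $f$ of the prescribed class. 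The main obstacle is the Birkhoff step: unlike the Iwasawa splitting of the conformal case it is only generically solvable, so $W$ exists only off a discrete family of singular curves, and one must check that the factorization can be carried out inside the twisted real group $\Lambda G_\sigma^\tau$ even though here $\sigma$ and $\tau$ do not commute.
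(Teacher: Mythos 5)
Your proposal is correct and follows essentially the same route as the paper, whose ``proof'' of this theorem is really the discussion preceding its statement: the same ODE argument for $L_+$ showing $\partial_v C_1=0$, the same Birkhoff splitting $C_1^{-1}C_2=L_+L_-^{-1}$ with $W=C_1L_+=C_2L_-$, and the same caveat about the splitting failing on a singular set. The details you supply that the paper leaves implicit --- the component-wise $\hat\tau$-fixing of $U_j,V_j$, the two-sided computation of $W^{-1}\,\d W$ forcing it into $\lambda^{-1}\mathfrak g_{-1}\oplus\mathfrak g_0\oplus\lambda\mathfrak g_1$, and the final gauge by $F_0\in G_0^{\hat\tau}$ as in Theorem~\ref{thm:flatconnectionsDA} --- are all consistent with the paper's framework.
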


\section{Complexification and real forms}\label{sc:real}
 This section is  a brief digression which is intended 
 to help to put this survey into a larger context. 
 It is clear that the extended frames $F$
 introduced in the previous sections take values 
 in the loop groups of 
\[
 \SU, \SUI, \SUIT,  \widetilde \SLR^{\pm} \; \mbox{or}\; \SLR.
\]
 For more details about these frames we refer to Section \ref{subsc:surfext}
 and the corresponding subsections of the first five sections.
 We show that their Maurer-Cartan forms  
 correspond to different  real forms of $\lsl$ or, more generally, of
 the affine Kac-Moody Lie algebra of type $A_2^{(2)}$.
 Moreover, by using the classification of real forms of type $A_2^{(2)}$ 
 in \cite{HG}, we obtain a rough classification of all surface classes associated 
 with specific real forms of  $\lsl$.


\subsection{Real forms of $\lsl$ and the surface classes considered in this paper}

 In the following discussion the Maurer-Cartan form $\alpha^{\l}$
 denotes $\alpha_{\SCP}^{\l}$, $\alpha_{\SCH}^{\l}$,  
 $\alpha_{\ST}^{\l}$, $\alpha_{\SEA+}^{\l}$,    $\alpha_{\SEA-}^{\l}$,    and $\alpha_{\SIA}^{\l}$
 in \eqref{eq:alphaL}, \eqref{eq:alphalambda-origCH}, \eqref{eq:alphaT},
 \eqref{eqlae}, \eqref{eqlah} and \eqref{eq:alphalambda-origA},
 respectively.
 Accordingly, the extended frame 
 $F^{\l}$ denotes $F_{\SCP}^{\l}$, $F_{\SCH}^{\l}$,  
 $F_{\ST}^{\l}$, $F_{\SEA+}^{\l}$ , $F_{\SEA-}^{\l}$ , and $F_{\SIA}$  in \eqref{eq:extCP}, \eqref{eq:extCH},
 \eqref{eq:extCHI},  \eqref{eq:extEAP},  \eqref{eq:extEAM} and 
 \eqref{eq:extIA}, respectively.
 A straightforward computation shows that 
 the Maurer-Cartan form $\alpha^{\lambda}$ of the  extended frame $F^{\lambda}$ 
 satisfies the following two 
 equations (where we write $\alpha (\lambda)$ for $ \alpha^\lambda$
 if it is convenient):
\[
 \sigma (\alpha)(\lambda) = \alpha (\lambda), \quad 
 \tau (\alpha)( \lambda) = \alpha(\lambda),
\]
 where  $\sigma$ is the order $6$
 linear outer automorphism of $\sl$ given by
 \[
\sigma (g)(\l) = - \ad (\di(\epsilon^2, \epsilon^4, -1)  \r{P} )\> g(\epsilon^{-1} \l)^T,
\]
with $\epsilon = e^{\pi i/3}$ the natural primitive sixth root of unity and 
\begin{equation}\label{eq:P0}
  \r{P} = \begin{pmatrix}
	   0 &1 &0  \\
	   1 &0 &0 \\
	   0 & 0 & -1
	  \end{pmatrix},
\end{equation}
 and $\tau$ is a complex  anti-linear involution 
 of $\sl$ varying with the surface class considered.
 
 Note, for simplicity we will sometimes write $\sigma(X) = - \ad(P)X^T$.

   More precisely, the family of Maurer-Cartan form $\alpha^{\l}$
 takes values in the following loop algebra:
\begin{equation}
 \lsl^{\tau} = \{ g : \C^{\times} \to 
 \sl\;|\; \sigma (g)(\lambda) = g(\lambda), \;\; 
\tau (g)(\lambda) = g(\l)\;\;\mbox{and $g \in \mathcal{W}$ }
 \},
 \end{equation}
 where $\mathcal{W}$ denotes the set of all $3\times3-$matrices with coefficients in the Wiener algebra on the unit circle which extend to all of $\C^{\times}$.
 
 Similarly, the extended frame $F(\l) = F^\lambda$ takes values in the 
 loop group $\LSL^{\tau}$ whose Lie algebra is
 $\Lambda \sl_{\sigma}^{\tau}$:
\begin{equation}
 \LSL^{\tau} = \{ g : \C^{\times} \to 
 \SL\;|\; \sigma (g)(\lambda) = g(\lambda), \;\; 
\tau (g)(\lambda) = g(\lambda)\;\;\mbox{and $g \in \mathcal{W}$}
 \},
\end{equation}
 where $\sigma$ is the order $6$ automorphism 
 \[
  \sigma (g) (\l)
 = \ad (\di(\epsilon^2, \epsilon^4, -1) \r{P} )\> g(\epsilon^{-1}\l)^{T -1},
 \]
and $\tau$ is, as above, an appropriate  complex  anti-linear involution.

 Note, by abuse of language we use the same notation 
 for  the Lie group automorphisms
 $\sigma$ and $\tau$ and their differentials. The order $6$ 
 automorphism $\sigma$ is in all cases the same.
 
 From the first five sections of this paper we obtain by inspection
 \begin{Theorem} \label{classify}
 The five surface classes discussed in the first five sections of this survey are related to complex anti-linear involutions $\tau$ as follows$:$ $\tau (g) (\l)$ is 
 given by 

\begin{tabular}{lcl}
 $(\bullet_{\SCP})$& $- \overline{g(1/\bar \lambda)}^T, $&  
  Minimal Lagrangian surfaces in $\CP,$ \cite{MM}$,$ \\
 $(\bullet_{\SCH})$ & $ - \ad (I_{2,1})  \overline
 {g(1/\bar \lambda)}^T,$ &
 Minimal Lagrangian surfaces in $\CH,$  \cite{ML}$,$  \\
$(\maltese_{\ST})$ & $ -\ad (\r{P})  \overline
 {g(\bar \lambda)}^T,$ & 
 Timelike minimal Lagrangian surfaces in $\CHI,$ \cite{DK:timelike}$,$ \\
$(\bullet_{\SEA})$ & $\ad (I_*\r{P})  \> \overline
 {g(1/\bar \lambda)}, $&
 Elliptic or hyperbolic affine spheres in $\mathbb {R}^3,$ \cite{DW1}$,$ \\
 $(\maltese_{\SIA})$ & $\overline
 {g(\bar \lambda)},$&
 Indefinite affine spheres in $\R^3,$ \cite{DE}$,$
\end{tabular}

 where $I_{2, 1} = \di(1, 1, -1)$ and $P_0$ is as just above. 
 Moreover, $I_*$ denotes $\id$ for the elliptic case and 
 $\id_{2,1}$ for the hyperbolic case.
\end{Theorem}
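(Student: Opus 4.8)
The plan is to treat Theorem \ref{classify} as a verification. In each of the five cases the extended frame $F^{\l}$ was constructed in the corresponding section from a coordinate frame $\mathcal F$ living in a concrete matrix group---$\SU$, $\SUI$, $\UIT$, $\widetilde\SLR^{\pm}$ or $\SLR$---by inserting the loop parameter through a diagonal gauge. Each such group is cut out by the invariance of a Hermitian or real bilinear form, and my first step would be to rewrite that defining relation, after the loop parameter has been put in, as a single identity $\tau(F^{\l})(\l)=F^{\l}(\l)$ for the involution $\tau$ listed in the statement. Once this is in hand, passing to the Maurer--Cartan form is automatic: since $\tau$ is an anti-holomorphic automorphism of the loop group it commutes with $g\mapsto g^{-1}\d g$, so $\tau(\alpha^{\l})=(\tau F^{\l})^{-1}\d(\tau F^{\l})=(F^{\l})^{-1}\d F^{\l}=\alpha^{\l}$. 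The companion identity $\sigma(\alpha^{\l})=\alpha^{\l}$ I would not reprove here, as it is exactly the graded structure $U_j,V_j\in\mathfrak g_j$ already established in Theorem \ref{propalpha}; the new content of Theorem \ref{classify} is only the explicit shape of $\tau$.

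Next I would carry out the identification case by case. For $(\bullet_{\SCP})$ the frame obeys $\overline{\mathcal F_{\SCP}}^{\,T}\mathcal F_{\SCP}=\id$, and reading the unitarity relation off the unit circle and extending it holomorphically to $\C^{\times}$ yields $\tau(g)(\l)=-\overline{g(1/\bar\l)}^{T}$. For $(\bullet_{\SCH})$ and $(\maltese_{\ST})$ the frames preserve the indefinite Hermitian forms given by $I_{2,1}$ and by $\r{P}$ respectively, so the same computation with the form inserted produces $\tau(g)(\l)=-\ad(I_{2,1})\overline{g(1/\bar\l)}^{T}$ and $\tau(g)(\l)=-\ad(\r{P})\overline{g(\bar\l)}^{T}$; here the conformal case carries the argument $1/\bar\l$ while the asymptotic-line case $(\maltese_{\ST})$ carries $\bar\l$. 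For $(\maltese_{\SIA})$ the frame $\mathcal F_{\SIA}$ is genuinely real in $\SLR$, so the reality condition collapses to $\tau(g)(\l)=\overline{g(\bar\l)}$. Finally for $(\bullet_{\SEA})$ I would transport the reality condition of $\SLR$ through the fixed conjugation defining $\widetilde\SLR^{\pm}$, recovering $\tau(g)(\l)=\ad(I_{*}\r{P})\overline{g(1/\bar\l)}$ with $I_{*}=\id$ in the elliptic and $I_{*}=I_{2,1}$ in the hyperbolic case, in agreement with the real forms $\ad(\r{P})\,\overline{\;\cdot\;}$ (elliptic) and $\ad(I_{2,1}\r{P})\,\overline{\;\cdot\;}$ (hyperbolic) recorded in the previous section.

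The step I expect to be the main obstacle is the bookkeeping of the spectral argument: one must check that combining the group relation with the conjugation of the gauge $G^{\l}=\di(\l,\l^{-1},1)$ indeed delivers the argument $1/\bar\l$ in the conformal cases and $\bar\l$ in the asymptotic-line cases, consistently with $B(\l)=\l^{\pm1}$ in \eqref{eq:sigmatau}. This is most delicate in the affine-sphere case, where the elliptic gauge is $G^{\l}_{+}=\di(i\l,i\l^{-1},1)$: the factors of $i$ interact with complex conjugation, and I would verify entry by entry that they are absorbed correctly so that the resulting $\tau$ acquires no spurious sign or factor. Once this matching is confirmed, a direct comparison of the entries of $\tau(\alpha^{\l})$ with those of $\alpha^{\l}$ in \eqref{eq:alphaL}, \eqref{eq:alphalambda-origCH}, \eqref{eq:alphaT}, \eqref{eqlae}, \eqref{eqlah} and \eqref{eq:alphalambda-origA} finishes the verification in all five cases.
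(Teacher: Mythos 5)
Your proposal is correct and takes essentially the same route as the paper, whose entire proof is the phrase ``From the first five sections of this paper we obtain by inspection'': your case-by-case verification---reading each $\tau$ off the invariant form defining $\SU$, $\SUI$, $\UIT$, $\widetilde \SLR^{\pm}$ or $\SLR$, and tracking the spectral argument $1/\bar\l$ versus $\bar\l$ through the gauge $G^{\l}$---is precisely that inspection, made explicit. The one step you flag as delicate does go through: for the elliptic gauge $G^{\l}_{+}=\di(i\l,i\l^{-1},1)$, applying $\ad(\r{P})$ together with complex conjugation and $\l\mapsto 1/\bar\l$ carries $U^{\l}_{\SEA+}$ to $V^{\l}_{\SEA+}$ entry by entry, the factors of $i$ being absorbed by the sign $-1$ in the third diagonal entry of $\r{P}$.
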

 The involutions $(\bullet_{\SCP}),(\bullet_{\SCH})$ and 
 $(\bullet_{\SEA})$  
 are called the \textit{almost compact} types and the remaining 
 ones $(\maltese_{\ST})$ and $(\maltese_{\SIA})$ are called  the \textit{almost split} types.
 \subsection{Real forms of $A_2^{(2)}$ and surface classes}
 Changing the point of view slightly we consider $\sigma$ as before and define the $\sigma$-twisted loop algebra
\[
  \lsl = \{ g : \C^{\times} \to 
 \sl\;|\; \sigma (g)(\lambda) = g(\lambda)
 \},
\]
 where we assume $g \in \mathcal{W}$, which denotes the set of 
 all $3\times3-$matrices with coefficients in the Wiener algebra 
 on the unit circle which extend to all of $\C^{\times}$.
  
 Similarly we consider the $\sigma$-twisted 
 loop group $\LSL$ whose Lie algebra is
 $\Lambda \sl_{\sigma}$:
\[
 \LSL = \{ g : \C^{\times} \to 
 \SL\;|\; \sigma (g)(\lambda) = g(\lambda)\},
\]
 Clearly, one can consider  $\lsl$ as the loop part of of 
 the twisted Kac-Moody algebra,  see for example \cite[Chapter 8]{Kac}:
\[
\hat{L}( \sl,  \sigma)  =  \lsl \oplus \C d \oplus \C c,
\]
 that is, $\hat{L}( \sl,  \sigma)$ is an extension of dimension $2$ 
 with  center $c$ of the loop 
  algebra $\lsl$. 
  Moreover, all the complex anti-linear involutions $\tau$ 
 considered above can be extended uniquely 
 to complex anti-linear involutions of the Kac-Moody algebra 
 $\hat{L}( \sl,  \sigma) $.
 This is a consequence of  in \cite[Theorem 3.4]{HG} as 
 $\lambda_\epsilon \in \pm 1$ in the notation of \cite{HG}.
 As a consequence of Theorem 3.8 in \cite{HG}, the equivalence 
 classes of involutions on the Kac-Moody algebra and the 
 loop algebra coincide. 
 
 From this point of view the complex anti-linear involutions 
 $\tau$ considered above 
 then define real forms of $\hat{L}( \sl,  \sigma) $.
  From \cite[Theorem 8.5]{Kac}, it follows that all 
 twisted Kac-Moody Lie algebras $\hat{L}( \sl,  \kappa)$, 
 with $\kappa$ an outer automorphism of $\sl$ are isomorphic. 
  
 Therefore, if we want to determine all possible real forms 
 (and the possible geometric counter parts) of all outer twisted 
 loop algebras  $\hat{L}( \sl,  \kappa)$, we can restrict to 
 $\kappa = \sigma$.
 So in our discussion below we can fix $\sigma$ and only need to vary
 the anti-linear involution $\tau$, the so-called {\it real form} 
 involution.
  Now we arrive at two different points of view:

  {\it Lie algebraic point of view}: 
 One classifies all real forms of the Kac-Moody algebra $A_2^{(2)}$ 
 up to conjugation. Any affine Kac-Moody algebra can be represented as the extension of a (possibly twisted) 
 loop algebra ${ \Lambda \mathfrak{g}}_{\sigma} =  
{\Lambda \sl} _{\sigma} = L(\sl,  \sigma).$ 
 While any suitable choice of $\mathfrak{g}$ and $\sigma$ uniquely defines an affine Kac-Moody algebra, the converse is not true: different involutions 
$\sigma$ and $\tilde{\sigma}$ may define the same Kac-Moody algebra, hence 
$\hat{L}(\sl,  \sigma)$ and $\hat{L}(\sl,  \tilde{\sigma})$ may be isomorphic for $\sigma \neq \tilde\sigma$. Hence, thinking about Kac-Moody algebras via pairs $(\mathfrak{g}, \sigma),$ the correct equivalence relation has to be slightly wider: it is defined in \cite{HG} and  called ``quasi-isomorphism''.
 Using the setting defined in loc. cit.,
 it turns out that the involutions listed in Theorem \ref{classify} are representatives (up to quasi-isomorphisms) of exactly 
 all real form involutions of $\hat{L}( \sl,  \sigma)$.
  Thus each representative of a real form of $\hat{L}( \sl,  \sigma)$ 
 has some geometric counter part.
 For all five geometric cases listed above 
 a loop group procedure has  been developed which allows (at least in principle) to construct all the surfaces of the corresponding class (see the references in Theorem \ref{classify}).
 This is a consequence of the fact that these surfaces can be characterized by a certain ``Gauss map'' to be harmonic.
 Actually, a harmonic Gauss map has only been established explicitly in 
 cases $(1)$ and  $(3)$ so far. In all other cases the existence of a harmonic Gauss map can be concluded, since the Maurer Cartan form of the naturally associated moving frame admits the insertion of a parameter $\lambda $ in such a way as it is known to correspond to a primitive harmonic map. 
 
 {\it Geometric point of view}: Here one wants to classify all 
 classes of surfaces which can be constructed as the five examples discussed in the first five sections of this paper, since the five $\tau$ listed in Theorem \ref{classify}
 all induce a surface class, the question is whether also quasi-isomorphic $\tau$ and $\tilde {\tau}$ can induce different surface classes.
 To determine all possible $\tau$ we recall that the known almost compact type surfaces 
 had $\tau's$ which commute with $\sigma,$ while the almost split type surfaces had $\tau's$ which satisfied the relation $\sigma \tau \sigma = \tau.$
\subsection{Real form involutions}
It is known that all real form involutions $\tau$ of 
 $\lsl$ are induced from some complex anti-linear involution of 
 $\sl$, see \cite{HG}. Since we restrict for now our concentration  
 on $\sl$ it is fairly easy to reduce 
 the possibilities. 
\begin{Remark}
 It is known \cite{HG} that some real forms of 
 ``untwisted'' loop algebras such as $A_n^{(1)}$ are 
 not coming from \textit{any} real form involutions on underlining 
 finite dimensional Lie algebras. 
\end{Remark}

\subsubsection{Real form involutions commuting with $\sigma$}
 We now classify real form involutions commuting with $\sigma$.
 \begin{Proposition}
 Let $\tau$ be a real form involution of the loop algebra $
 \lsl = L( \sl,  \sigma)$ which commutes with $\sigma$. 
 We will use $\beta(X) = \bar{X}$ and $\tau_0 (X) 
 = - \bar{X}^T$. 
\begin{enumerate}
 \item[(a)] If $\tau = \ad (B) \circ \beta,$ then $B$ is a generalized permutation matrix coinciding with  $P_0$  after setting all non-zero coefficients equal to $1$.
 More precisely, after removing appropriate cubic roots and after possibly a conjugation by $\ad (D)$ with some diagonal matrix $D$ such that 
 $\ad (D)$ commutes with $\sigma$ we obtain $B = P_0$ or $B = I_{21}P_0$.
 
 \item[(b)] If $\tau$ is of the form $\tau = \psi \circ \beta$ with $\psi $ an outer automorphism of  $\sl$, then we write $\tau = \ad (Q) \circ \tau_0$. Then $Q$ is without loss of generality
  a diagonal matrix of the form $Q = \di 
 (q,q^{-1},1), q \in \R^{\times}$.
\end{enumerate}
 \end{Proposition}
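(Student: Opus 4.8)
The plan is to use the reduction recalled just above from \cite{HG}, that every real form involution of $\lsl$ is induced by a complex anti-linear involution of $\sl$, so that it suffices to classify, up to the stated normalizations, the anti-linear involutions $\tau$ of $\mathfrak g = \sl$ satisfying $\sigma \tau = \tau \sigma$, with $\sigma(X) = -\ad(P)X^T$ and $P = \di(\epsilon^2,\epsilon^4,-1)\r P$. Writing $\tau = \phi\circ\beta$ with $\phi := \tau\beta$ a complex-linear automorphism of $\sl$, the structure of $\Aut(\sl)$ places us in exactly the two cases of the statement: either $\phi$ is inner, $\phi = \ad(B)$, giving (a); or $\phi$ is outer, whence $\tau = \ad(Q)\circ\tau_0$ for some $Q \in \GL$, giving (b). Moreover $\sigma\tau = \tau\sigma$ is equivalent to $\sigma\phi = \phi\bar\sigma$, where $\bar\sigma := \beta\sigma\beta$ satisfies $\bar\sigma(X) = -\ad(\bar P)X^T$; and since $P$ is unitary ($\bar P^T = P^{-1}$) I would record the identity $(\bar P^T)^{-1} = P$, which streamlines both cases.

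For part (a) I would first extract the involutivity constraint: $\tau^2 = \id$ forces $B\bar B = cI$ with $c \in \R$, so after rescaling $B$ the sign $c = \pm1$ is the only surviving invariant. The commuting relation says that $\ad(B)$ intertwines $\bar\sigma$ with $\sigma$, hence also their squares. A direct computation gives $\sigma^2 = \ad(M)$ with $M = \di(\epsilon^{-2},\epsilon^2,1)$, whose diagonal entries are the cube roots of unity, and $\bar\sigma^2 = \ad(M^{-1})$; thus $BM^{-1}B^{-1} = \zeta M$ for a scalar $\zeta$ with $\zeta^3 = 1$. Matching the eigenlines of $M^{\pm1}$ then forces $B$ to be a generalized permutation matrix whose underlying permutation is one of $(1\,2),(1\,3),(2\,3)$. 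To single out $(1\,2)$, i.e. to show the support of $B$ is that of $\r P$, I would bring in the full order-six relation $P(B^T)^{-1} = \nu\,B\bar P$ (equivalently its $\mathbb Z/2$-part coming from $\sigma^3$), which distinguishes the index $3$ attached to the eigenvalue $-1$ of $P$ and forces the permutation to fix it. Finally I would clear the cube roots from the nonzero entries and conjugate by a diagonal $D$ commuting with $\sigma$, which are precisely the $D = \di(d,d^{-1},1)$, to normalize those entries; the invariant $c = \pm1$ then separates the two representatives $B = \r P$ and $B = I_{2,1}\r P$.

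For part (b) I would run the parallel but shorter argument. Involutivity of $\tau = \ad(Q)\circ\tau_0$ forces $Q$ to be Hermitian up to a phase, so without loss of generality $Q = Q^*$. Writing $\sigma = \ad(P)\circ\phi_0$ with $\phi_0(X) = -X^T$ and using $\phi_0\ad(Q)\phi_0 = \ad((Q^T)^{-1})$, the relation $\sigma\phi = \phi\bar\sigma$ collapses, via $(\bar P^T)^{-1} = P$, to the single matrix identity $P(Q^T)^{-1} = \rho\,QP$ for a scalar $\rho$. Combined with $Q = Q^*$ this says exactly that $Q$ is normalized by the $\sigma$-compatible maximal torus; diagonalizing $Q$ by a conjugation $\ad(D)$ with $D = \di(d,d^{-1},1)$ commuting with $\sigma$, and rescaling, then puts $Q$ into the asserted form $Q = \di(q,q^{-1},1)$ with $q \in \R^{\times}$.

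The main obstacle is the refinement in part (a): the squared relation alone only determines $B$ as a generalized permutation matrix with one of three admissible transpositions, and genuinely descending to the single transposition $(1\,2)$, and then to exactly the two normal forms $\r P$ and $I_{2,1}\r P$ and no others, requires exploiting the full order-six structure of $\sigma$ while carefully tracking the scalar, cube-root and sign ambiguities so that none of them reintroduces spurious solutions.
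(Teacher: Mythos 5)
Your overall route is the paper's: reduce to the finite-dimensional algebra, use commutation with $\sigma^2=\ad(\Omega)$, $\Omega=\di(\epsilon^4,\epsilon^2,1)$, to force $B$ into one of three generalized-permutation supports, then invoke the full relation $P(B^T)^{-1}=\rho B\bar P$ to pin the support down to that of $P_0$, and finally normalize by cube roots of unity and by diagonal conjugations $\ad(D)$, $D=\di(d,d^{-1},1)$. However, your last step of part (a) is based on a false invariant. You propose to separate the two normal forms $P_0$ and $I_{2,1}P_0$ by the sign $c=\pm1$ in $B\bar B=cI$. In odd dimension this sign carries no information: taking determinants gives $c^3=|\det B|^2>0$, so $c=+1$ always, and indeed both $P_0$ and $I_{2,1}P_0$ are real symmetric involutions with $B\bar B=I$. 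The invariant that actually does the work (and that the paper uses) is the sign of the real entry $x=B_{12}$ after the relation with $\sigma$ has forced $B_{21}=x^{-1}$, $B_{33}^3=-1$, and involutivity has forced $x\in\R$: a conjugation by $D=\di(d,d^{-1},1)$ replaces $x$ by $d\bar d\,x=|d|^2x$, so it can normalize $|x|$ to $1$ but never flip the sign, and the two signs yield exactly $B=P_0$ and $B=-I_{2,1}P_0\sim I_{2,1}P_0$. Without replacing your $c=\pm1$ argument by this (or an equivalent signature argument for the symmetric matrix $B$), your proof either collapses the two cases into one or separates them for a wrong reason.

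There is also a gap in part (b): you never actually establish that $Q$ is diagonal. The single matrix identity $P(Q^T)^{-1}=\rho\,QP$, even combined with $Q=Q^*$, is one quadratic relation and does not force diagonality, and a general Hermitian $Q$ cannot be diagonalized by a conjugation of the restricted form $D=\di(d,d^{-1},1)$; the phrase ``normalized by the $\sigma$-compatible maximal torus'' does not substitute for an argument. The paper's mechanism is the same one you used in part (a): since $\tau_0$ already commutes with $\sigma$, the hypothesis forces $\ad(Q)$ to commute with $\sigma$, hence with $\sigma^2=\ad(\Omega)$; because $\Omega$ is a regular diagonal matrix this pins $Q$ down to a (generalized permutation, and then, using the relation with $\sigma$ itself, genuinely diagonal) matrix, after which the determinant computation gives $\mu^3=1$, $Q_{33}^3=1$, and involutivity gives $q\in\R^{\times}$. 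You should run the $\sigma^2$ step in part (b) as well rather than only in part (a).
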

 
 \begin{proof}
 In the following we denote 
 the restrictions of the $\sigma$ and $\tau$ on $\lsl$
 to the finite dimensional Lie algebra $\sl$
 by the same symbols.

 (a) Since $\tau$ commutes with $\sigma$, it also commutes with 
 $\sigma^2 = \ad (\Omega)$, where 
 $\Omega = \di (\epsilon^4, \epsilon^2,1).$  A direct evaluation yields 
 \begin{equation} \label{comm-case-a}
 B = \mu \Omega B \Omega.
 \end{equation}
 This is equivalent to $B_{ij} = \mu \Omega_{ii} \Omega_{jj} B_{ij}$.
 Clearly, the definition of $\Omega$ implies that $ \Omega_{ii} \Omega_{jj} $ only attains the values $\epsilon^4, \epsilon^2,1$. 
 It is straightforward to verify:
 \begin{align*}
 \Omega_{ii} \Omega_{jj}  &= 1 \Longleftrightarrow (i,j) \in \{(1,2), (2,1),(3,3)\}, \\
 \Omega_{ii} \Omega_{jj}  &= \epsilon^2  \Longleftrightarrow (i,j) \in \{(3,2), (2,3),(1,1)\}, \\
  \Omega_{ii} \Omega_{jj}  &= \epsilon^4  \Longleftrightarrow (i,j) \in \{(1,3), (3,1),(2,2)\}
\end{align*}
 Thus $B$ is a ``generalized permutation matrix''.
 
 Finally we need to evaluate the commutation relation with $\sigma$ directly. Writing this out  yields the equivalent equation
 \begin{equation} \label{commutesigma}
 P(B^{T})^{-1} = \rho B \bar{P}.
 \end{equation}
 Replacing all non-zero coefficients in this equation by $1$ still yields a correct equation.
 Since now the ``reduced equation'' reads 
 $\hat{P}(\hat{B}^{T})^{-1} = \hat{B} \hat{P}$, it follows 
 $\hat{B}  = \hat{P}.$ Hence $B$  has non-zero entries exactly, where $P$ has them.
 Evaluating (\ref{commutesigma}) explicitly yields four equations and one infers $B_{33}^3 = -1$.
 Hence $B_{33} = \epsilon,  \epsilon^3$ or $\epsilon^5$.
 For these cases one pulls out of $B$ the matrix $(-  \epsilon^m) I$ 
 and obtains without loss of generality $B_{33} = -1.$ Putting 
 $x = B_{12}$, then the (\ref{commutesigma})  also implies
 $B_{21} = x^{-1}$. 
 
 Evaluating the involution property of $\tau$ implies that $x$ is real.
 Now we put 
 \[
 D = \di\left( \sqrt{|x|}^{\frac{1}{2}},  \sqrt{|x|}^{-\frac{1}{2}}, 1
 \right)
 \]
 and consider
 $\hat{\tau} = \ad (D) \circ \tau \circ \ad (D)^{-1}$ and $\hat{\sigma} = \ad (D) \circ \sigma \circ \ad (D)^{-1}$. A straightforward computation yields $\hat{\sigma} = \sigma$ and 
 $\hat{B} = I_{2,1}P_0$ or  $\hat{B} = -P_0$. Clearly the minus sign is irrelevant and we obtain the claim.
  
 (b) By evaluating the first line in Theorem \ref{classify} we know that  $\tau_0$ commutes with $\sigma$. Hence the $\C$-linear automorphism $\ad (Q)$ commutes with $\sigma$, 
 whence it also commutes with $\sigma^2$ and therefore $Q$ is a diagonal matrix. A direct evaluation of the commutation property now yields $QP = \mu PQ^{-1}$. Taking the determinant 
 yields $\mu^3 = 1$ and the equation yields $\mu = Q_{33}^2$ and $\mu = Q_{11} Q_{22}$.
 Hence $Q_{33}^3 = 1$ and we can pull out without loss of generality
 $Q_{33} I$ from $Q$. Finally we evaluate the consequence of $\tau$ being an involution and obtain the claim.
 \end{proof}
 
 \begin{Corollary}
 The cases $(\bullet_{\SCP}),(\bullet_{\SCH})$ and $(\bullet_{\SEA})$ in 
 Theorem $\ref{classify}$, with case $(\bullet_{\SEA})$ split into two 
 cases, are exactly all possible geometric cases, where $\tau$ and $\sigma$ commute.
 \end{Corollary}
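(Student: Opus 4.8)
The plan is to read the Corollary off directly from the preceding Proposition by matching its two normal forms against the entries of Theorem~\ref{classify}. The Proposition shows that every real form involution $\tau$ commuting with $\sigma$ is, up to conjugation by a diagonal $\ad(D)$ commuting with $\sigma$, of exactly one of two types: the \emph{linear} type $\tau = \ad(B)\circ\beta$ with $B = P_0$ or $B = I_{2,1}P_0$, or the \emph{outer} type $\tau = \ad(Q)\circ\tau_0$ with $Q = \di(q,q^{-1},1)$, $q\in\R^\times$. Since any complex anti-linear involution of $\sl$ is conjugate either to a $\beta$-type or to a $\tau_0$-type map (the outer automorphism group of $\mathfrak{sl}_3\C$ being generated by $X\mapsto -X^T$), these two types are exhaustive, so it suffices to identify each with a geometric case.

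First I would dispose of the linear type. Comparing $\tau = \ad(B)\circ\beta$ with the $(\bullet_{\SEA})$ row of Theorem~\ref{classify}, namely $\tau(g)(\lambda) = \ad(I_*P_0)\,\overline{g(1/\bar\lambda)}$ with $I_* \in \{\id, I_{2,1}\}$, the choice $B = P_0$ gives the elliptic affine sphere case ($I_* = \id$) and $B = I_{2,1}P_0$ gives the hyperbolic one ($I_* = I_{2,1}$). These are precisely the two subcases into which $(\bullet_{\SEA})$ is split. For the outer type I would then show that the one-parameter family $Q = \di(q,q^{-1},1)$ collapses to exactly two inequivalent classes governed by the sign of $q$. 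Conjugating $\tau = \ad(Q)\circ\tau_0$ by a real diagonal $\ad(D)$ with $D = \di(d,d^{-1},1)$ (which commutes with $\sigma$, as in the Proposition's proof) sends $Q \mapsto D^2 Q$, i.e.\ $q \mapsto d^2 q$; since $d^2 > 0$ this preserves the sign of $q$ while allowing $|q|$ to be normalized to $1$. Hence $q$ may be taken to be $\pm 1$. For $q = 1$ we get $Q = \id$ and $\tau = \tau_0$, which is the $(\bullet_{\SCP})$ involution; for $q = -1$ we have $\ad(\di(-1,-1,1)) = \ad(I_{2,1})$, so $\tau = \ad(I_{2,1})\circ\tau_0$, the $(\bullet_{\SCH})$ involution.

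That these two are genuinely inequivalent I would establish by identifying the fixed point algebra of $X \mapsto -Q\bar X^T Q^{-1}$ with $\mathfrak{su}$ of the Hermitian form $J = Q^{-1} = \di(q^{-1},q,1)$: for $q > 0$ the signature is $(3,0)$, giving the compact $\mathfrak{su}(3)$ of case $(\bullet_{\SCP})$, while for $q < 0$ the signature is $(1,2)$, giving $\su$ of case $(\bullet_{\SCH})$. Finally I would observe that lifting from $\sl$ to $\lsl$ adds no freedom: the requirement that $\tau$ commute with $\sigma$ forces, through the extension formula \eqref{eq:sigmatau}, the spectral parameter substitution $\lambda \mapsto 1/\bar\lambda$, which is exactly what appears in all three $\bullet$-rows of Theorem~\ref{classify}, and by \cite{HG} the equivalence classes of involutions on the loop algebra and on the Kac--Moody algebra are those already detected at the finite-dimensional level.

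The main obstacle is the outer-type step: one must verify both that opposite signs of $q$ yield inequivalent real forms and that equal signs collapse to a single class, being careful that only conjugations preserving the commuting-with-$\sigma$ condition count as quasi-isomorphisms (so that the reduction $q \mapsto \pm 1$ is legitimate rather than an arbitrary similarity). Assembling the four resulting normal forms $P_0$, $I_{2,1}P_0$, $\id$, and $I_{2,1}$ then yields exactly the four asserted geometric cases, with $(\bullet_{\SEA})$ split into its elliptic and hyperbolic subcases, and no others.
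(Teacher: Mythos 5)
Your proposal is correct and follows essentially the same route as the paper, which states the Corollary as an immediate consequence of the preceding Proposition by matching its two normal forms ($B = P_0$ or $I_{2,1}P_0$ for the type $\ad(B)\circ\beta$, and $Q = \di(q,q^{-1},1)$ for the type $\ad(Q)\circ\tau_0$) against the rows of Theorem \ref{classify}. Your explicit reduction of the parameter $q$ to $\pm 1$ (via $q \mapsto d^2q$ under conjugation by $\ad(\di(d,d^{-1},1))$, which preserves the sign of $q$), together with the signature computation distinguishing $\mathfrak{su}(3)$ from $\mathfrak{su}(2,1)$, supplies exactly the step the paper leaves implicit in concluding that the $\tau_0$-type contributes precisely the two cases $(\bullet_{\SCP})$ and $(\bullet_{\SCH})$ and no one-parameter family of others.
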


 \subsubsection{Real form involutions satisfying $\sigma \tau \sigma = \tau$}
 In this case we proceed very similarly to the previous case.
\begin{Proposition}
 Let $\tau$ be a real form involution of the loop algebra 
 $\lsl = L( \sl,  \sigma) $ which 
 satisfies  the relation $\sigma \tau \sigma = \tau$. 
 As above we will use $\beta(X) = \bar{X}$ and $\tau_0 (X) 
 = - \bar{X}^T$.
\begin{enumerate}
\item If $\tau = \ad (B) \circ \beta,$ then $B $ is a diagonal matrix coinciding with $I$ after 
 removing appropriate cubic roots and after possibly a conjugation by 
 $\ad (D)$ with some diagonal matrix $D = \di 
 (\delta, \delta^{-1},1)$ such that $\ad (D)$ commutes with $\sigma$.
 
\item If $\tau$ is of the form $\tau = \psi \circ \beta$ with $\psi $ an outer automorphism of  $\sl$, then writing $\tau = \ad (Q) \circ \tau_0$ we obtain that $Q$ is, up to manipulations as in the proof of the last proposition, the matrix $P_0$. 
\end{enumerate}
 \end{Proposition}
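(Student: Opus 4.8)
The plan is to run the proof of the preceding proposition essentially verbatim, but with the commutation relation $\sigma\tau=\tau\sigma$ replaced by $\sigma\tau\sigma=\tau$, and to track how this one change propagates through the matrix equations. As before I restrict $\sigma$ and $\tau$ to the finite-dimensional algebra $\sl$, write $\sigma(X)=-PX^TP$ with $P=\di(\epsilon^2,\epsilon^4,-1)\r{P}$ (so that $P^2=\id$, $\bar P^T=P$, and $\sigma^2=\ad(\Omega)$ with $\Omega=\di(\epsilon^4,\epsilon^2,1)$), and use $\beta(X)=\bar X$ and $\tau_0(X)=-\bar X^T$. The key first step is the elementary observation that $\sigma\tau\sigma=\tau$ is equivalent to $\tau\sigma=\sigma^{-1}\tau$, hence to $\tau\sigma^2=\sigma^{-2}\tau$: so $\tau$ now \emph{intertwines} $\ad(\Omega)$ with $\ad(\Omega^{-1})$ instead of fixing it. This sign change is precisely what converts the ``generalized permutation'' conclusion of the previous proposition into the ``diagonal'' (resp.\ ``$P_0$-pattern'') conclusion here.

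For part (1), with $\tau=\ad(B)\circ\beta$, evaluating $\tau\sigma^2=\sigma^{-2}\tau$ yields $\Omega B\Omega^{-1}=\nu B$ for some scalar $\nu$, i.e.\ $B_{ij}(\Omega_{ii}\Omega_{jj}^{-1}-\nu)=0$. Since $\Omega_{ii}\Omega_{jj}^{-1}=1$ exactly on the diagonal, the principal value $\nu=1$ forces $B$ diagonal, while the two other admissible values confine the support of $B$ to a single $3$-cycle. I would exclude these using the involution identity $\tau^2=\ad(B\bar B)$: for a $3$-cycle $B$ the product $B\bar B$ is again a nonscalar cyclic matrix, so $\tau^2\neq\operatorname{id}$. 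Hence $B=\di(b_1,b_2,b_3)$. Writing out the full relation $\sigma\tau\sigma=\tau$, which becomes $P(B^T)^{-1}\bar P^T=\lambda B$ and simplifies via $\bar P^T=P$, then produces the entrywise relations $b_2^{-1}=\lambda b_1$, $b_1^{-1}=\lambda b_2$, $b_3^{-1}=\lambda b_3$, whose only new content (after the determinant normalization) is $b_3^3=1$; and $\tau^2=\operatorname{id}$ gives $|b_1|=|b_2|=|b_3|=1$. Finally, pulling out an appropriate cube root of unity (keeping $\det B=1$) normalizes $b_3=1$, and a conjugation by $D=\di(\delta,\delta^{-1},1)$ — which one checks directly commutes with $\sigma$, since $DP=PD^{-1}$ — absorbs the residual phase so that $B=\id$.

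For part (2), with $\tau=\ad(Q)\circ\tau_0$, I would first record, exactly as in the previous proposition, that $\tau_0$ commutes with $\sigma$ (a direct computation gives $\sigma\tau_0\sigma=\sigma^2\tau_0$). Then $\sigma\tau\sigma=\tau$ collapses to $\sigma\ad(Q)\sigma=\ad(Q)$, so $\ad(Q)$ conjugates $\sigma$ to $\sigma^{-1}$ and hence $\ad(\Omega)$ to $\ad(\Omega^{-1})$; this gives $Q^{-1}\Omega Q=c\,\Omega^{-1}$ for a scalar $c$, forcing $Q$ to realize a permutation of the eigenlines of $\Omega$ that interchanges the eigenvalues $\epsilon^4\leftrightarrow\epsilon^2$ and fixes $1$, with $c\in\{1,\epsilon^2,\epsilon^4\}$ selecting among three transposition patterns. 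I would rule out the two non-$P_0$ patterns by substituting into the full relation, which here reads $P(Q^T)^{-1}P^T=\kappa Q$: because $P$ itself interchanges the first two coordinates, for a ``wrong'' pattern the left-hand side has support disjoint from $Q$, forcing $Q=0$. For the surviving $P_0$-pattern the full relation together with $\tau^2=\operatorname{id}$ and $\det Q=1$ pin down $q_3^3=-1$ and $|q_1|=|q_2|=|q_3|=1$; as in part (1), removing a cube root of unity and conjugating by a suitable $D=\di(\delta,\delta^{-1},1)$ normalizes $Q$ to $\r{P}$.

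The step I expect to be the main obstacle is not any single computation but the organized casework that eliminates the non-principal supports. One must verify that the scalars $\nu$ (resp.\ $c$) arising at the $\sigma^2$-level are genuinely limited to the three listed values; that the involution identity and the full $\sigma$-level relation \emph{together} discard every pattern except the diagonal one (resp.\ the $P_0$ one); and that the two permitted normalizing operations — scaling by cube roots of unity and conjugation by $D$ with $\ad(D)$ commuting with $\sigma$ — really suffice to land on exactly $\id$ and $\r{P}$ without reintroducing a failure of $\tau^2=\operatorname{id}$ or of $\sigma\tau\sigma=\tau$. Because both $\beta$ and $\tau_0$ are antilinear, the delicate part throughout is bookkeeping the complex conjugations and the powers of $\epsilon$ correctly during these reductions.
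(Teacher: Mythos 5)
Your proposal is correct and follows essentially the same route as the paper: constrain the support of $B$ (resp.\ $Q$) via the $\sigma^2$-level relation $\Omega B\bar{\Omega}=\eta B$ (your $\Omega B\Omega^{-1}=\nu B$ is the same equation since $\bar\Omega=\Omega^{-1}$), discard the $3$-cycle patterns with $\tau^2=\operatorname{id}$ and the wrong transpositions with the full relation $P(B^T)^{-1}P=\kappa B$, then normalize by cube roots of unity and conjugation by $D=\di(\delta,\delta^{-1},1)$. The entrywise relations and the conclusions $b_3^3=1$, $q_3^3=-1$ all match the paper's computation.
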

 \begin{proof}
(a) Evaluating the defining equation one obtains
\begin{equation} \label{relsigma1}
P B^{T-1}P = \kappa B, 
\end{equation}
for some $\kappa$ satisfying $\kappa^3 = 1.$ 
Since we also have $\sigma^2 \tau \sigma^2 = \tau$, we also obtain
(recall: $\sigma^2(X) = 
\Omega X \Omega^{-1}$ with $\Omega = \di (\alpha^2, \alpha,1)$, $\alpha = \epsilon^2$).
\begin{equation} \label{relsigma2}
\Omega B \bar{\Omega} = \eta B
\end{equation}
with $\eta^3 = 1$.
Evaluating the last equation one observes that there are three cases: if one of the entries 
$B_{11}, B_{22},B_{33}$ is non-zero, then $B$ is a diagonal matrix.
If one of the entries $B_{12},B_{21},B_{31}$ does not vanish, then $\eta = \alpha$ and $B$ is a generalized permutation matrix associated with the permutation $(1,2,3) \rightarrow (3,1,2)$.
If one of the   three remaining entries of $B$ does not vanish, the $\eta = \alpha^2$ and $B$
 corresponds to the permutation  $(1,2,3) \rightarrow (2,3,1)$.
 
 Next we evaluate that $\tau$ is an involution. A simple computation yields the equation
 $B \bar{b} = \gamma I$. From this it follows that $B$ is a diagonal matrix with diagonal entries in $S^1$ and of determinant 1.

Evaluating now the relation (\ref{relsigma1}) one obtains with little effort  the equation $B_{33}^3 = 1$. Hence, after pulling out $B_{33}I$ from B we can assume 
 without loss of generality that $B_{33} = 1 $ holds.

Evaluating all this we see that $B$ is, without loss of generality, 
 a diagonal matrix of the form $B = (b,b^{-1},1)$ with $b \in S^1$.

But now it is straightforward to verify that 
 $D=\left( \sqrt{b}^{\frac{-1}{2}}, \sqrt{b}^{\frac{1}{2}}, 1\right)$
satisfies 
\begin{equation*}
\ad (B) \sigma \ad (B)^{-1} = \sigma 
 \quad \mbox{and} \quad \ad (B) \tau \ad (B)^{-1}  = \beta.
\end{equation*}
This proves the claim.

(b)  By evaluating the first line in Theorem 1.1 we know that $\tau_0$ commutes with $\sigma$.
Hence we obtain $\sigma \circ \ad (Q) \sigma = \ad (Q).$ But then we also obtain 
 $\sigma^2  \circ \ad (Q) \sigma^2 = \ad (Q).$ Similar to the proof of the last proposition we conclude from his that $Q$ is a generalized permutation matrix, more precisely 
 belonging to a transposition. Moreover, the equation
  $\sigma \circ \ad (Q) \sigma = \ad (Q).$ leads to $P = \nu Q P^T Q^T.$ For the underlying permutation matrices this implies  $\hat{P} = \hat{Q} \hat{ P}^T \hat{Q}^T.$
  Since $P$ and $Q$ are transpositions we conclude $\hat{P} = \hat{Q}$.
Evaluating now  $\sigma \circ \ad (Q) \sigma = \ad (Q)$ one  obtains that all entries of
$Q$ are sixth roots of unity and have the same square. Finally evaluating that $\tau$ is an involution we obtain after a simple computation  $Q_{33} = -1$ and the other two entries are 
equal and $\pm 1$. If they are equal to 1, then we have shown $Q = P_0$. If they are $-1$, then we conjugate $\tau$ and $\sigma$ by $\ad \di (-1,-1,1)$ and observe that this does not change $\sigma$ and brings $\tau$ into the form $\ad (P_0) \tau_0.$
\end{proof}

  \begin{Corollary}
 The cases $(\maltese_{\ST})$ and $(\maltese_{\SIA})$ 
 in Theorem $\ref{classify}$ are exactly all possible geometric cases, 
 where $\tau$ and $\sigma$ satisfy $\sigma \tau \sigma = \tau$.
 \end{Corollary}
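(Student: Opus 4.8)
The plan is to deduce the Corollary directly from the preceding Proposition together with Theorem \ref{classify}: the Proposition has already classified all real form involutions $\tau$ satisfying $\sigma\tau\sigma=\tau$, so the task reduces to matching its two normal forms against the list. First I would recall the dichotomy underlying the Proposition. Any such $\tau$ restricts to a complex anti-linear involution of $\sl$, and since the outer automorphism group of $\sl$ is $\Z/2$, generated by $X\mapsto -X^T$, the $\C$-linear automorphism $\tau\circ\beta$ is either inner or outer. These are precisely cases (a) and (b) of the Proposition, so every admissible $\tau$ belongs to one of the two families and no further possibilities arise.

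Next I would read off the two normal forms. In case (a) the Proposition reduces $\tau$, after scalar (cube-root) removal and conjugation by a diagonal $\ad(D)$ fixing $\sigma$, to $\tau=\beta$, that is $\hat\tau(X)=\bar X$; lifting to the loop algebra by the asymptotic-line rule $\tau(g)(\lambda)=\hat\tau(g(\bar\lambda))$ of \eqref{eq:sigmatau} produces $\tau(g)(\lambda)=\overline{g(\bar\lambda)}$, which is exactly $(\maltese_{\SIA})$. In case (b) the reduction gives $\hat\tau=\ad(P_0)\circ\tau_0$, that is $\hat\tau(X)=-\ad(P_0)\bar X^T$, whose lift $\tau(g)(\lambda)=-\ad(P_0)\overline{g(\bar\lambda)}^T$ is exactly $(\maltese_{\ST})$. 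The converse inclusion is then automatic: these normal forms are obtained from involutions satisfying $\sigma\tau\sigma=\tau$ by operations that preserve the relation, namely scalar removal (which does not alter $\ad(B)$) and conjugation by $D$ with $\ad(D)\sigma\ad(D)^{-1}=\sigma$, under which $\sigma\hat\tau\sigma=\ad(D)(\sigma\tau\sigma)\ad(D)^{-1}=\hat\tau$. Hence both $(\maltese_{\SIA})$ and $(\maltese_{\ST})$ genuinely satisfy the relation, and by the forward direction they exhaust all cases; this proves the Corollary.

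The step demanding the most care is not the matching itself but the passage between the finite-dimensional classification of the Proposition and the loop-algebra involutions appearing in Theorem \ref{classify}. One must check that the loop lift in the asymptotic-line case uses the substitution $\lambda\mapsto\bar\lambda$ (rather than $\lambda\mapsto 1/\bar\lambda$, which belongs to the conformal case), so that the reconstructed $\tau(g)(\lambda)$ matches the listed entries verbatim; and one must confirm that the reductions used—scalar removal and $\sigma$-preserving diagonal conjugation—are precisely the moves allowed under the quasi-isomorphism equivalence of \cite{HG}, so that forms related by them define the same surface class while the two surviving classes remain genuinely distinct. With these two bookkeeping points in place, the Corollary follows at once.
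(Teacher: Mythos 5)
Your proposal is correct and follows essentially the same route as the paper: the Corollary is an immediate consequence of the preceding Proposition, whose two normal forms (case (a) giving $\hat\tau=\beta$, hence $(\maltese_{\SIA})$, and case (b) giving $\hat\tau=\ad(P_0)\circ\tau_0$, hence $(\maltese_{\ST})$) are matched against the list in Theorem \ref{classify} via the asymptotic-line loop lift $\lambda\mapsto\bar\lambda$. Your additional bookkeeping remarks (the inner/outer dichotomy exhausting the cases, and the $\sigma$-preserving conjugations respecting the relation and the quasi-isomorphism class) are exactly the implicit content the paper relies on.
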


\bibliographystyle{plain}
\def\cprime{$'$}

\end{document}